\crefname{section}{Section}{Sections}
\crefname{subsection}{\S}{\S\S}
\theoremstyle{plain}
\newtheorem{lemma}{Lemma}[section]
\newtheorem{proposition}[lemma]{Proposition}
\newtheorem{corollary}[lemma]{Corollary}
\newtheorem{theorem}[lemma]{Theorem}
\theoremstyle{nonumberplain}
\theoremstyle{plain}
\newtheorem{definition}[lemma]{Definition}
\newtheorem{remark}[lemma]{Remark}
\newtheorem{notation}[lemma]{Notation}
\crefname{definition}{definition}{definitions}
\crefname{ex}{example}{examples}
\crefname{remark}{remark}{remarks}
\crefname{convention}{convention}{conventions}
\crefname{notation}{notation}{notations}
\crefname{lemma}{lemma}{lemmas}
\crefname{proposition}{proposition}{propositions}
\crefname{corollary}{corollary}{corollaries}
\crefname{theorem}{theorem}{theorems}
\crefname{enumi}{}{}
\crefname{assumption}{assumption}{Assumptions}
\crefname{equation}{}{}
\numberwithin{equation}{section}
\renewcommand{\theequation}{\thesection-\arabic{equation}}
\theoremstyle{nonumberplain}
\newtheorem{proof}{Proof}
\newcommand\pf[1]{\newtheorem{#1}{Proof of \Cref{#1}}}
\newcommand\bC{{\mathbb C}}
\newcommand\bN{{\mathbb N}}
\newcommand\bS{{\mathbb S}}
\newcommand\bT{{\mathbb T}}
\newcommand\bZ{{\mathbb Z}}
\newcommand\cC{{\mathcal C}}
\newcommand\cD{{\mathcal D}}
\newcommand\cM{{\mathcal M}}
\newcommand\cP{{\mathcal P}}
\newcommand\cS{{\mathcal S}}
\newcommand\fG{{\mathfrak G}}
\newcommand\fK{{\mathfrak K}}
\newcommand\fg{{\mathfrak g}}
\newcommand\fo{{\mathfrak o}}
\newcommand\fp{{\mathfrak p}}
\newcommand\fs{{\mathfrak s}}
\newcommand\fgl{\mathfrak{gl}}
\newcommand\fsl{\mathfrak{sl}}
\newcommand\md{\mathrm{Mod}}
\DeclareMathOperator{\id}{id}
\DeclareMathOperator{\End}{\mathrm{End}}
\DeclareMathOperator{\Hom}{\mathrm{Hom}}
\newcommand\numberthis{\addtocounter{equation}{1}\tag{\theequation}}
\newcommand{\cat}[1]{\textsc{#1}}
\newcommand{\qedhere}{\mbox{}\hfill\ensuremath{\blacksquare}}
\title{Universal tensor categories generated by dual pairs}
\author{Alexandru Chirvasitu and Ivan Penkov}
\begin{document}

\date{}

\newcommand{\Addresses}{{
  \bigskip
  \footnotesize

  \textsc{Department of Mathematics, University at Buffalo, Buffalo,
    NY 14260-2900, USA}\par\nopagebreak
  \textit{E-mail address}: \texttt{achirvas@buffalo.edu}

  \medskip

  \textsc{Jacobs University Bremen, Campus Ring 1, 28759 Bremen, Germany}\par\nopagebreak
  \textit{E-mail address}: \texttt{i.penkov@jacobs-university.de}  

}}

\maketitle

\begin{abstract}
  Let $V_*\otimes V\rightarrow\mathbb{C}$ be a non-degenerate pairing of countable-dimensional complex vector spaces $V$ and $V_*$. The Mackey Lie algebra $\fg=\mathfrak{gl}^M(V,V_*)$ corresponding to this paring consists of all endomorphisms $\varphi$ of $V$ for which the space $V_*$ is stable under the dual endomorphism $\varphi^*: V^*\rightarrow V^*$. We study the tensor Grothendieck category $\mathbb{T}$ generated by the $\fg$-modules $V$, $V_*$ and their algebraic duals $V^*$ and $V^*_*$. This is an analogue of categories considered in prior literature, the main difference being that the trivial module $\mathbb{C}$ is no longer injective in $\mathbb{T}$. We describe the injective hull $I$ of $\mathbb{C}$ in $\mathbb{T}$, and show that the category $\mathbb{T}$ is Koszul. In addition, we prove that $I$ is endowed with a natural structure of commutative algebra. We then define another category $_I\mathbb{T}$ of objects in $\mathbb{T}$ which are free as $I$-modules. Our main result is that the category ${}_I\mathbb{T}$ is also Koszul, and moreover that ${}_I\mathbb{T}$ is universal among abelian $\mathbb{C}$-linear tensor categories generated by two objects $X$, $Y$ with fixed subobjects $X'\hookrightarrow X$, $Y'\hookrightarrow Y$ and a pairing $X\otimes Y\rightarrow \text{\textbf{1}}$ where \textbf{1} is the monoidal unit. We conclude the paper by discussing the orthogonal and symplectic analogues of the categories $\mathbb{T}$ and ${}_I\mathbb{T}$.
\end{abstract}

\noindent {\em Key words:} Mackey Lie algebra, tensor module, monoidal category, Koszulity, Grothendieck category, semi-artinian

\vspace{.5cm}

\noindent{MSC 2020: }17B65; 17B10; 18M05; 18E10; 16T15; 16S37

\tableofcontents

\section{Introduction}

A tensor category for us is a symmetric, not necessarily rigid, $\mathbb{C}$-linear monoidal abelian category. In this paper we construct and study a tensor category which is universal as a tensor category generated by two objects $X$, $Y$ with fixed subobjects $X'\hookrightarrow X$, $Y'\hookrightarrow Y$ and endowed with a pairing $X\otimes Y\rightarrow {\bf 1}$, the object ${\bf 1}$ being the monoidal unit.

The simpler problem of constructing a universal tensor category generated just by two objects $X$, $Y$ endowed with pairing $X\otimes Y\rightarrow {\bf 1}$ was solved several years ago, and explicit constructions of such a category are given in \cite{SS} and \cite{DPS}. The construction in \cite{DPS} realizes this category as a category $\bT_{\fsl(\infty)}$ of representations of the Lie algebra $\fsl(\infty)$, choosing $X$ as the natural $\fsl(\infty)$-module $V$, and $Y$ as its restricted dual $V_*$. Motivated mostly by a desire to understand better the representation theory of the Lie algebra $\fsl(\infty)$, in \cite{PS1} a larger category was constructed, denoted $\widetilde{Tens}_{\fsl(\infty)}$, which contains also the algebraic dual modules $V^*$ and $V_*^*$. It is clear that the category $\widetilde{Tens}_{\fsl(\infty)}$ has a completely different flavor as its objects have uncountable length while $\bT_{\fsl(\infty)}$ is a finite-length category.

However, in \cite{PS2} the observation was made that the four representations $V$, $V_*$, $V^*$, $V_*^*$ generate a finite-length tensor category $\bT^{4}_{\mathfrak{gl}^M(V,V_*)}$ over the larger Lie algebra $\mathfrak{gl}^M(V,V_*)$, see Section \ref{se.prel}. We call this latter Lie algebra a Mackey Lie algebra as its introduction has been inspired by G. Mackey's work \cite{M}. The simple objects of $\bT^{4}_{\mathfrak{gl}^M(V,V_*)}$ were determined in \cite{Chi14}. Furthermore, in \cite{us0} the tensor category $\bT^{3}_{\mathfrak{gl}^M(V,V_*)}$, generated by $V$, $V_*$, and  $V^*$, was studied in detail. It was proved that $\bT^{3}_{\mathfrak{gl}^M(V,V_*)}$ is Koszul, and it was established that $\bT^{3}_{\mathfrak{gl}^M(V,V_*)}$ is universal as a tensor category generated by two objects $X$, $Y$ with a pairing $X\otimes Y\rightarrow {\bf 1}$, such that $X$ has a subobject $X'\hookrightarrow X$. Later, a vast generalization of the results of \cite{us0} was given in \cite{us1}: here a universal tensor category with two objects $X$, $Y$, a paring $X\otimes Y\rightarrow {\bf 1}$ and an arbitrary (possibly transfinite) fixed filtration of $X$ was realized as category of representations of a certain large Lie algebra.

A main difference of the category  $\bT^{4}_{\mathfrak{gl}^M(V,V_*)}$ with previously studied categories is that, as we show in the present paper, the injective hulls of simple objects are not objects of  $\bT^{4}_{\mathfrak{gl}^M(V,V_*)}$ but of the ind-completion $(\bT^{4}_{\mathfrak{gl}^M(V,V_*)})^{ind}$ which we denote simply by $\bT$. In particular, the trivial module has an injective hull $I$ in $\bT$ of infinite Loewy length, i.e. with an infinite socle filtration. Moreover, remarkably, $I$ has the structure of a commutative algebra. 

This leads us to the idea of considering the category ${}_I\bT$ of $I$-modules internal to $\bT$. The morphisms in this new category are morphisms of $\fgl^M(V,V_*)$-modules as well as of $I$-modules. The simple objects of ${}_I\bT$ are of the form $I\otimes M$ where $M$ is a simple module in $\bT$. 

A culminating result of the present paper is that the category ${}_I\bT$ has the universality property stated in the first paragraph of this introduction. The pairs $X'\hookrightarrow X$ and $Y'\hookrightarrow Y$ are realized respectively as $I \otimes V_*\subset I \otimes V^* $ and $I \otimes V\subset I \otimes V_*^*$, $I$ is the unity object, and the tensor product in ${}_I\bT$ is $\otimes_I$.

 Finally, in Section \ref{se:bcd} we study analogues of the tensor categories $\bT_{\fo(\infty)}$ and $\bT_{\fs\fp(\infty)}$ considered in \cite{DPS} and \cite{SS}. Consider a tensor category generated by a single object $X$ with a subobject $X'\hookrightarrow X$ and a pairing $X\otimes X\rightarrow \text{\textbf{1}}$. After identifying $V_*$ and $V$, our construction of the category ${}_I\bT$ yields a universal tensor category also in this setting. However, one can assume in addition that the pairing 
$X\otimes X\rightarrow \text{\textbf{1}}$ is symmetric or antisymmetric, which leads to new universality problems for tensor categories. 
 With this in mind, we introduce 
  $\bT^2_{\fo(V)}$ and $\bT^2_{\fs\fp(V)}$ where $\fo(V)$ and $\fs\fp(V)$ are respective orthogonal and symplectic Lie algebras of a countable-dimensional vector space $V$. In analogy with our previous constructions, we then produce appropriate categories $_{I'} \bT^2$ for $I'=I_{\fo(V)}$ and $I'=I_{\fs\fp(V)}$ and prove that these latter categories are universal in the respective new settings. Moreover, the categories ${}_{I_{\fo(V)}}\bT^2$ and ${}_{I_{\fs\fp(V)}}\bT^2$ are canonically equivalent as monoidal categories.

\subsection*{Acknowledgements}

A.C. was partially supported by NSF grants DMS-1801011 and DMS-2001128.

I.P. was partially supported by DFG grant PE 980-7/1.

\section{Preliminaries}\label{se.prel}

\subsection{Notation}
All vector spaces are defined over $\bC$ (more generally, we could work over an algebraically closed field of characteristic zero); similarly, all abelian categories and all functors between such are assumed $\bC$-linear, and we refer to \cite{groth} for general background on abelian/additive categories.


By $S^k X$ and $\Lambda^k X$ we denote respectively the $k$-th symmetric and exterior powers of a vector space $X$, and $S_n$ stands for the symmetric group on $n$ letters.

Once and for all we fix a non-degenerate pairing $V_*\otimes_{\bC} V\rightarrow \bC$ of countable-dimensional vector spaces $V$ and $V_*$. This pairing defines embeddings $V_*\subset V^*$, $V\subset V^*_*$, where $V^*=\Hom_{\bC}(V,\bC)$, $V_*^*=\Hom_{\bC}(V_*,\bC)$. For any vector space $M$ we set $M^*=\Hom_{\bC}(M,\bC)$. We abbreviate $\otimes_{\bC}$ as $\otimes$. By $\otimes$ we denote also tensor product in abstract tensor categories in the hope that this will cause no confusion.

Except in Section \ref{se:bcd}, $\fg$ will be the {\it Mackey Lie algebra} $\fgl^M$ {$(V,V_*)$} of \cite{PS2} associated to the pairing $V_*\otimes V\to \bC$. By definition, $$\fgl^M(V,V_*)=\{ \varphi\in \End V \mid \varphi^*(V_*)\subset V_*\},$$ where $\varphi^*:V^*\rightarrow V^*$ is the operator dual to $\varphi$. We will describe $\fg$ explicitly as Lie algebra of infinite matrices shortly. 

We set
\begin{equation*}
  W_*:=V^*/V_*,\quad W := V_*^*/V\quad\text{and}\quad F:=W_*\otimes W.  
\end{equation*}

There is an extension
\begin{equation}\label{eq:exatseqQ}
  0\to \bC\to Q\to F\to 0
\end{equation}
where $Q$ is defined as the quotient of $V^*\otimes V_*^*$ by the sum of the kernels of the pairings
\begin{equation*}
  V^*\otimes V\to \bC\quad \text{and}\quad V_*\otimes V_*^*\to \bC.
\end{equation*}
In Proposition \ref{pr.ann} below we prove that the extension \Cref{eq:exatseqQ} is non-splitting.


We model the actions of $\fg$ on the various modules mentioned above as follows:

\begin{itemize}
\item[$-$] $V_*^*$ consists of infinite column vectors with entries indexed by $\bN=\{0,1,\cdots\}$.
\item[$-$] $V\subset V^*_*$ consists of {\it finite} column vectors, i.e. those with at most finitely many non-zero entries.
\item[$-$] Dually, $V^*$ consists of $\bN$-indexed infinite row vectors.
\item[$-$] The elements of $V_*\subset V^*$ are precisely the finite row vectors. 
\item[$-$] $\fg$ consists of $\bN\times \bN$-matrices with finite rows and columns, acting on $V_*^*$ by left multiplication.
\item[$-$] Similarly, $\fg$ acts on $V^*$ as minus right multiplication. 
\item[$-$] $V\otimes V^*$$(=V^*\otimes V)$ consists of finite-rank $\bN\times \bN$ matrices, acted upon by $\fg$ by commutation.
\end{itemize}

We will frequently make use of Schur functors $\mathbb{S}_{\lambda}$ attached to Young diagrams $\lambda$. Often we write $X_{\lambda}$ instead of $\bS_{\lambda} X$ for a vector space $X$. Moreover, $S^k X=\mathbb{S}_{\rho} X$, $\Lambda^k X=\mathbb{S}_{\gamma}X$, where $\rho$, $\gamma$ are respectively a row and a column with $k$ boxes.

For Young diagrams $\lambda$, $\mu$, $\nu$ and $\pi$ we write
\begin{align*}
  L_{\lambda,\mu,\nu,\pi}&:=W_{*\lambda}\otimes V_{\mu,\nu}\otimes W_{\pi}\label{eq:s}\numberthis, \\
  J_{\lambda,\mu,\nu,\pi}&:=W_{*\lambda}\otimes V^*_{\mu}\otimes V_{\nu}\otimes W_{\pi},
\end{align*}
and similarly, for non-negative integers $l,m,n,p$ we set
\begin{align*}
  L_{l,m,n,p}&:=W_*^{\otimes l}\otimes V_{m,n}\otimes W^{\otimes p},\\
  J_{l,m,n,p}&:=W_*^{\otimes l}\otimes V^{*\otimes m}\otimes V_*^{*\otimes n}\otimes W^{\otimes p}\label{eq:j}\numberthis,
\end{align*}
where $V_{m,n}$ is the socle of $V^{*\otimes m}\otimes V^{\otimes n}$, i.e.
\begin{equation*}
  V_{m,n}=\bigoplus_{|\mu|=m,|\nu|=n}V_{\mu,\nu}^{m_{\mu,\nu}}
\end{equation*}
for appropriate multiplicities $m_{\mu,\nu}$. Here $|\lambda|$ denotes the degree (number of boxes) of a Young diagram $\lambda$. Finally, for any subscript $s$ of the form $(\bullet,\bullet,\bullet,\bullet)$ we set
\begin{equation}\label{eq:13}
  I_s:=I\otimes J_s,
\end{equation}
where $I$ is the object constructed below in \Cref{se.a}. 

\begin{definition}\label{def.thk}
  We refer to objects involving only the two outside diagrams $\lambda$ and $\pi$ as {\it thick} (or {\it purely thick} for emphasis) and those involving only the two middle diagrams as {\it thin}. Everything else is {\it mixed}. 
\end{definition}

It is essential to recall Corollary 4.3 in \cite{us0} which claims that $L_{\lambda,\mu,\nu,\pi}$ is a simple $\fg$-module, and implies that $L_{l,m,n,p}$ is a semisimple $\fg$-module.

The following remark will be used implicitly and repeatedly: given a short exact sequence
\begin{equation*}
  0\to x'\to x\to x''\to 0
\end{equation*}
in a tensor abelian category, the symmetric power $S^kx$ has a filtration
\begin{equation*}
  0=F_{-1}\subset F_0\subset \cdots\subset F_n = S^k x
\end{equation*}
with isomorphisms
\begin{equation*}
  F_j/F_{j-1} \cong S^{k-j} x'\otimes S^jx'' \text{ for }  0\le j\le k. 
\end{equation*}

\subsection{Plethysm}\label{subse:plet}


Given that $F=W\otimes W_*$ and we have to work with symmetric and exterior powers of $F$, we will have to understand how such powers decompose as direct sums of objects of the form $\mathbb{S}_{\lambda}W\otimes \mathbb{S}_{\mu}W_*$. The result applies to a tensor product $W\otimes W_*$ in {\it any} $\bC$-linear tensor category, so we work in this generality throughout the present subsection.

We call a partition $\lambda$ {\it special} if it satisfies the condition: all hooks of $\lambda$ with diagonal corner have horizontal and vertical arms of length $\mu_i-1$ and $\mu_i$ respectively, where $\mu_1> \mu_2 > \ldots > 0$ is a partition of $k$. We now recall the following result.

\begin{proposition}\label{pr.plth}
  Let $x$ and $y$ be two objects in a $\bC$-linear tensor category. We have the following decompositions:
  \begin{enumerate}[(a)]
  \item\label{item:1} $S^k(x\otimes y)$ is the direct sum of all objects of the form $\mathbb{S}_{\lambda}x\otimes \mathbb{S}_{\lambda}y$ as $\lambda$ ranges over all Young diagrams of degree $k$.
  \item\label{item:2} $\Lambda^k(x\otimes y)$ is the direct sum of all objects of the form $\mathbb{S}_{\lambda}x\otimes \mathbb{S}_{\lambda^{\perp}}y$ as $\lambda$ ranges over all Young diagrams of degree $k$, where $\lambda^{\perp}$ denotes the conjugate partition.
  \item\label{item:10} $S^k S^2 x$ is the direct sum of all $\bS_{\lambda}x$ for partitions $\lambda$ of degree $k$ with even parts , i.e. {\it even partitions}. 
  \item\label{item:11} $S^k \Lambda^2 x$ is $\bigoplus_{\substack{\text{even }\lambda \\ |\lambda|=k}}\bS_{\lambda^{\perp}}x$.
  \item\label{item:12} $\Lambda^k\Lambda^2 x$ is $\bigoplus_{\substack{\text{special }\lambda \\ |\lambda|=k}}\bS_{\lambda}x$.
  \item\label{item:13} $\Lambda^k S^2 x$ is $\bigoplus_{\substack{\text{special }\lambda \\ |\lambda|=k}}\bS_{\lambda^{\perp}}x$.    
  \end{enumerate}
\end{proposition}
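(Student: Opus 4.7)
The plan is to recognize all six identities as classical plethystic decompositions of Schur functors, and to deduce them via a single universality principle: in any $\bC$-linear symmetric monoidal category the Schur functor $\bS_\lambda$ is obtained by applying the Young symmetrizer $c_\lambda\in \bC[S_n]$ to the $n$-fold tensor power, so any natural direct-sum decomposition of a composite of Schur functors amounts to a decomposition of the corresponding $\bC[S_n]$-module. Such an identity thus holds in the general setting if and only if it holds in the category of finite-dimensional complex vector spaces of sufficiently large dimension, where it can be verified at the level of $\mathrm{GL}$-characters, equivalently, in the ring of symmetric functions.

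For (a) and (b), I would give a direct categorical argument rather than appeal to symmetric functions. The canonical symmetry yields $(x\otimes y)^{\otimes k}\cong x^{\otimes k}\otimes y^{\otimes k}$, under which the diagonal $S_k$-action on the left corresponds to the action on the right through the diagonal embedding $S_k\hookrightarrow S_k\times S_k$. Decomposing
\[ x^{\otimes k}\otimes y^{\otimes k}\;\cong\; \bigoplus_{\lambda,\mu\vdash k}\bS_\lambda x\otimes \bS_\mu y\otimes V_\lambda\otimes V_\mu \]
as an $S_k\times S_k$-module and extracting the diagonal $S_k$-invariants (respectively, the sign-isotypic part) yields (a) (respectively, (b)), using that $(V_\lambda\otimes V_\mu)^{S_k}$ is one-dimensional iff $\lambda=\mu$, while its sign-isotypic component is one-dimensional iff $\mu=\lambda^\perp$.

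For (c)--(f), these are the classical plethysm identities of Littlewood. They are verified at the character level via the Frobenius formula or equivalently in the ring of symmetric functions: parts (c) and (d) express $S^k S^2 x$ and $S^k\Lambda^2 x$ in terms of partitions with even parts, while (e) and (f) describe the exterior analogues in terms of the ``special'' partitions of the statement. Formally, the two pairs are related by the sign twist which inside a wreath product $S_k\wr S_2$ swaps the outer trivial character for the outer sign; this transforms $S^k\leftrightarrow \Lambda^k$ and, at the output, sends the even partitions of (c), (d) to the special partitions of (e), (f).

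The main obstacle is the combinatorial identification in (e) and (f): verifying that the partitions appearing are exactly those described by the stated hook-arm condition. This is where the most intricate symmetric-function bookkeeping sits, and I would either quote Littlewood's formulas directly or match hook data by computing the appropriate inner products of power-sum symmetric functions via the Frobenius character formula.
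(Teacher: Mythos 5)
Your proposal is correct. For parts (a) and (b) you give a genuinely different and more self-contained argument than the paper: whereas the paper simply cites the Cauchy identities from \cite[(6.2.8)]{ss-intro}, you derive them directly by passing through the isomorphism $(x\otimes y)^{\otimes k}\cong x^{\otimes k}\otimes y^{\otimes k}$, decomposing the right-hand side as an $(S_k\times S_k)$-equivariant object via $x^{\otimes k}\cong\bigoplus_\lambda \bS_\lambda x\otimes V_\lambda$, and then extracting the diagonal-$S_k$ trivial (resp.\ sign) component, using $(V_\lambda\otimes V_\mu)^{S_k}\cong\Hom_{S_k}(V_\mu,V_\lambda)$ together with $V_\mu\cong V_\mu^*$ and $V_{\mu}\otimes\mathrm{sgn}\cong V_{\mu^\perp}$. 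This is a clean categorical proof and arguably more illuminating than a bare citation. For (c)--(f) your approach is essentially the same as the paper's (it cites Macdonald's Example I.8.6; you cite Littlewood's plethysm formulas / Frobenius characters); both reduce to classical symmetric-function identities. You also correctly articulate the transfer principle the paper leaves implicit: because composites of Schur functors are polynomial functors governed by idempotents in $\bC[S_n]$, a direct-sum decomposition verified in $\mathbf{Vect}_\bC$ (equivalently at the level of symmetric functions) lifts automatically to any idempotent-complete $\bC$-linear symmetric monoidal category, and $\bC$-linear abelian tensor categories are idempotent-complete. The only caveat is that you stop short of actually carrying out the combinatorial verification in (e) and (f), but so does the paper, and that material is genuinely standard.
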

\begin{proof}
  (\labelcref{item:1}) and (\labelcref{item:2}) are reformulations of the Cauchy identities in \cite[(6.2.8)]{ss-intro}. 
  The other four points paraphrase \cite[Example I.8.6]{macd-bk}.
\end{proof}

\subsection{Ordered Grothendieck categories}\label{sec:orderedgrothendieckcat}

We recall the following notion from \cite[Definition 2.3]{us1}.

\begin{definition}\label{def.ord-gr}
  Let $(\cP,\preceq)$ be a poset. An {\it ordered Grothendieck category} with underlying order $(\cP,\preceq)$ is a Grothendieck category $\cC$ together with objects $X_s$, $s\in \cP$ so that the following conditions hold.

\begin{enumerate}[(a)]
\item\label{item:3} The objects $X_s$ are {\it semi-artinian}, in the sense that all of their non-zero quotients have non-zero socles.
\item\label{item:5} Every object in $\cC$ is a subquotient of a direct sum of copies of various $X_s$.
\item\label{item:6} The simple subobjects in
  \begin{equation}\label{eq:14}
    \cS_s:=\{\text{isomorphism classes of simples in }\mathrm{soc}X_s\}
  \end{equation}
  are mutually non-isomorphic for distinct $s$ and they exhaust the simples in $\cC$.
\item\label{item:7} Simple subquotients of $X_s$ outside the socle $\mathrm{soc}X_s$ are in the socle of some $X_t$, $t\prec s$. 
\item\label{item:8} Each $X_s$ is a direct sum of objects with simple socle.
\item\label{item:9} Let $t\prec s$. The maximal subobject $X_{s\succ t}\subset X_{s}$ whose simple constituents belong to various $\cS_r$ for $s\succeq r\not\preceq t$ is the common kernel of a family of morphisms $X_s\to X_t$.
\end{enumerate}
\end{definition}

Ordered Grothendieck categories are well behaved in a number of ways. For instance (\cite[Corollary 2.6]{us1}):

\begin{proposition}
  The indecomposable injective objects in an ordered Grothendieck category $\cC$ are, up to isomorphism, precisely the indecomposable summands of the objects $X_s$ in \Cref{def.ord-gr}.  \qedhere
\end{proposition}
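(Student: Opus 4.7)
The proposition has two complementary directions: first, each $X_s$ is injective in $\cC$ (so by axiom (e) its indecomposable summands, which have simple socles, are indecomposable injective objects); second, every indecomposable injective in $\cC$ is isomorphic to such a summand. I would organize the proof around these two claims, with the bulk of the work in the first.

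For the injectivity of $X_s$, the preliminary observation is that $\cC$ is itself semi-artinian: axiom (a) makes each $X_s$ semi-artinian, axiom (b) realizes every object as a subquotient of a direct sum of $X_s$'s, and semi-artinianity is closed under direct sums, subobjects, and quotients. In a semi-artinian Grothendieck category, injectivity of $X_s$ is equivalent to $\mathrm{Ext}^1_{\cC}(S,X_s)=0$ for every simple $S$, that is, to $X_s$ admitting no proper essential extension by a simple. I would therefore analyze an essential extension $0\to X_s\to E\to S\to 0$ and, letting $u\in \cP$ be the unique element with $S\in\cS_u$ given by axiom (c), split into three cases. Case (i), $u=s$: apply axiom (e) to write $X_s=\bigoplus_i Y_i$ with each $Y_i$ of simple socle, and argue that the $S$-isotypic part of $X_s$ is already the injective hull of $S$ since any essential enlargement would disturb the rest of the decomposition. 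Case (ii), $u\not\preceq s$: the Loewy structure of $X_s$ controlled by axiom (d) keeps all simple subquotients inside $\bigcup_{r\preceq s}\cS_r$, so $S$ cannot be ``reached'' from $X_s$ and the extension is forced to split by independence of socle types. Case (iii), $u\prec s$: use axiom (g), which provides a family of maps $X_s\to X_t$ (taking $t=u$) whose common kernel is $X_{s\succ t}$ and thus carries no $\cS_u$-content; pushing the extension forward along a suitable such map reduces the problem to a splitting question inside $X_t$, which one handles by descent along $\cP$.

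For the converse direction, take an indecomposable injective $J\in\cC$. Semi-artinianity and indecomposability force $\mathrm{soc}(J)$ to be a single simple $S$, which by axiom (c) lies in $\cS_s$ for a unique $s$. The inclusion $\mathrm{soc}(J)\hookrightarrow X_s$ extends, by injectivity of $X_s$ together with the essentiality of $\mathrm{soc}(J)\subset J$, to an embedding $J\hookrightarrow X_s$ (the extended map is injective on $J$ because its restriction to the essential socle is injective). Its image is a direct summand because $J$ is injective, and indecomposability identifies $J$ with an indecomposable summand of $X_s$, completing the identification.

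The main obstacle is case (iii) of the injectivity step: controlling $\mathrm{Ext}^1(S,X_s)$ when $S$ sits strictly below $X_s$ in the poset. Axiom (g) is the indispensable tool here, and the non-routine part is organizing the descent along $\cP$ — potentially by transfinite induction if $\cP$ lacks a chain condition, or by iteratively applying the maps from (g) until one reaches a position where the Ext group vanishes for structural reasons (for instance, once one lands at a $\preceq$-minimal element, or matches $S$ against its own copy in some $\mathrm{soc}(X_t)$ and invokes a case (i)-type argument there).
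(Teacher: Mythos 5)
The paper does not actually prove this proposition: it is quoted verbatim from \cite[Corollary 2.6]{us1}, with the \texttt{\char`\\qedhere} marker signalling that the proof is elsewhere. So there is no in-paper argument to compare against; your attempt must be judged on its own terms.

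Your overall strategy is sensible, and two of its pieces are correct. First, the reduction: $\cC$ is semi-artinian because each $X_s$ is semi-artinian by \Cref{item:3}, every object of $\cC$ is a subquotient of a sum of $X_s$'s by \Cref{item:5}, and semi-artinianity passes to coproducts, subobjects and quotients; in such a category an object $M$ is injective iff $\mathrm{Ext}^1(S,M)=0$ for all simple $S$, because a proper essential extension $M\subsetneq E(M)$ produces a simple in $E(M)/M$ and hence a non-split extension of $M$ by a simple. Second, your converse direction is essentially correct: an indecomposable injective $J$ in a semi-artinian Grothendieck category has simple essential socle $S\in\cS_s$ (otherwise $E(S_1)$ would be a proper direct summand of $J$), the inclusion $\mathrm{soc}(J)\hookrightarrow \mathrm{soc}(X_s)\hookrightarrow X_s$ extends to $J\to X_s$ by injectivity of $X_s$, essentiality of $\mathrm{soc}(J)$ forces this map to be monic, and injectivity of $J$ makes it a summand.

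The genuine gap is the heart of the matter: the proof that $\mathrm{Ext}^1(S,X_s)=0$. In case (i) ($S\in\cS_s$) your argument is circular — asserting that ``the $S$-isotypic part of $X_s$ is already the injective hull of $S$'' is precisely the injectivity statement you are trying to establish, and ``any essential enlargement would disturb the rest of the decomposition'' does not rule out a non-split extension whose new composition factor $S$ lands in a higher Loewy layer while leaving $\mathrm{soc}(X_s)$ intact. In case (ii) ($u\not\preceq s$), ``the extension is forced to split by independence of socle types'' is not an argument: the fact that $S$ never appears as a subquotient of $X_s$ does not imply $\mathrm{Ext}^1(S,X_s)=0$ — a simple $T$ can perfectly well admit a non-split extension by a simple $S$ that is not a subquotient of $T$. (The upper-triangularity of $\mathrm{Ext}$ recalled as \Cref{pr.def-ge} would give this, but that result is cited from elsewhere and one cannot simply assume it while proving the foundational injectivity statement.) Case (iii) you explicitly leave as a sketch, acknowledging it as the main obstacle. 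You have correctly identified that condition \Cref{item:9} is the key tool, and that some kind of descent along the poset is needed, but that descent is exactly the technical content of \cite[Proposition 2.5 and Corollary 2.6]{us1}, and you do not carry it out. A minor additional slip: you refer to ``axiom (g),'' but the definition has only six items (a)--(f); the condition you mean is (f), i.e.\ \Cref{item:9}.
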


Recall (\cite[\S 3.2]{us0} or \cite[Definition 2.8]{us1}):

\begin{definition}\label{def:defect}
  For two elements $i\prec j$ in $\cP$ the {\it defect} $d(i,j)$ is the supremum of the set of non-negative integers $q$ for which we can find a chain
  \begin{equation*}
    i=i_0\prec \cdots\prec i_q=j.
  \end{equation*}
We put also $d(i,i):=0$. In the context of an ordered Grothendieck category as in \Cref{def.ord-gr} we adopt the simplified notation $d(S,T)$ for $d(s,t)$ when $S\in \cS_s$ and $T\in \cS_t$.  
\end{definition}

According to \cite[Proposition 2.9]{us0} ext functors in an ordered Grothendieck category exhibit the following ``upper triangular'' behavior.

\begin{proposition}\label{pr.def-ge}
  Let $S\in \cS_s$ and $T\in \cS_t$ be two simple objects in an ordered Grothendieck category. If $\mathrm{Ext}^q(S,T)\ne 0$ then $d(s,t)\ge q$. \qedhere
\end{proposition}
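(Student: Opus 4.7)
The plan is to compute $\mathrm{Ext}^q(S,T)$ from a minimal injective resolution of $T$, tracking how the poset indices of simple constituents evolve down the resolution. Set $L_0 := T$ and let $I_0$ be its injective hull; inductively put $L_{q+1} := I_q/L_q$ and let $I_{q+1}$ be the injective hull of $L_{q+1}$. Splicing gives an injective resolution
\begin{equation*}
  0 \to T \to I_0 \to I_1 \to I_2 \to \cdots
\end{equation*}
of $T$, which computes $\mathrm{Ext}^\ast(-,T)$.

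The core claim, to be proved by induction on $q$, is that every simple constituent of $L_q$ lies in $\cS_u$ for some $u$ admitting a strictly descending chain $t = r_0 \succ r_1 \succ \cdots \succ r_q = u$ in $\cP$. The base case $q=0$ is trivial since $L_0 = T \in \cS_t$. For the inductive step I would invoke two of the defining axioms of \Cref{def.ord-gr}: namely, that each $X_s$ decomposes as a direct sum of subobjects with simple socles, and that every simple subquotient of $X_s$ outside its socle lies in some $\cS_r$ with $r \prec s$. Together these imply that the injective hull of a simple in $\cS_r$ is a direct summand of $X_r$ whose constituents, apart from the socle copy of that simple, all lie in $\cS_{r'}$ with $r' \prec r$. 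By minimality, $I_q$ is the injective hull of $\mathrm{soc}(L_q) \subseteq L_q$, and $L_{q+1} = I_q/L_q$ is disjoint from $\mathrm{soc}(I_q) = \mathrm{soc}(L_q)$; hence the inductive chain of length $q$ attached to constituents of $\mathrm{soc}(L_q)$ extends by one strict step to produce a chain of length $q+1$ for any constituent of $L_{q+1}$.

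To finish: if $\mathrm{Ext}^q(S,T) \neq 0$, then $\mathrm{Hom}(S, I_q) \neq 0$, as this is where the $q$-cocycles live. Since $S$ is simple, any nonzero map $S \to I_q$ factors through $\mathrm{soc}(I_q) = \mathrm{soc}(L_q)$, exhibiting $S$ as a simple constituent of $L_q$. The inductive claim then provides a chain $t \succ \cdots \succ s$ of length $q$, which reversed is precisely a chain $s \prec \cdots \prec t$ witnessing $d(s,t) \geq q$.

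The main obstacle lies in the inductive step—specifically, cleanly combining the two $X_s$-axioms to guarantee that every successive term of the resolution contributes a \emph{strict} descent by one step in $\preceq$, which is the mechanism that ultimately produces the defect lower bound. A secondary point to handle carefully is the interplay between the minimality of the resolution and the identification $\mathrm{soc}(I_q) = \mathrm{soc}(L_q)$, without which the chain could stall at length $q$ rather than growing.
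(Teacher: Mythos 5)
The paper does not itself prove this proposition; it is cited from \cite[Proposition 2.9]{us0} with the $\blacksquare$ marking the citation rather than a proof. So there is no in-text argument for your proposal to match, but the strategy you describe --- following a minimal injective resolution $0\to T\to I_0\to I_1\to\cdots$ and inductively bounding the poset index of socle constituents of the cosyzygies via axioms (g)--(h) of \Cref{def.ord-gr} --- is the natural one and almost certainly the one used in \cite{us0}; the paper's own proof of \Cref{thm:kosz-gen} runs on the same engine. Your argument is correct in substance. Three points are worth tightening. First, when you say ``apart from the socle copy of that simple,'' you are implicitly using that $U\in\cS_u$ appears \emph{only} in the socle of $E(U)$; this does follow, since by axiom (g) any other occurrence would force $U\in\cS_r$ for some $r\prec u$, contradicting the disjointness in axiom (f). Second, ``$L_{q+1}$ is disjoint from $\mathrm{soc}(I_q)$'' should be interpreted as: since $L_q\supseteq\mathrm{soc}(I_q)$, the quotient map $I_q\to L_{q+1}$ factors through $I_q/\mathrm{soc}(I_q)=\bigoplus_\alpha E(U_\alpha)/U_\alpha$, so every simple subobject of $L_{q+1}$ is a subquotient of that direct sum; because simple objects in a Grothendieck category are finitely generated, such a subquotient localizes to a finite subsum and hence to a single $E(U_\alpha)/U_\alpha$, where axiom (g) applies. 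Third, you can simplify the inductive claim: it suffices to track simple \emph{subobjects} of $L_q$ (equivalently constituents of $\mathrm{soc}(L_q)$) rather than all constituents --- that is all the final step uses, since the nonvanishing of $\mathrm{Ext}^q(S,T)$ only produces a map $S\to I_q$, hence $S\hookrightarrow\mathrm{soc}(L_q)$, and the inductive step only draws on the socle constituents of $L_q$ anyway.
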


It is implicit in the statement that, in particular, we have $s\preceq t$ (see \cite[Lemma 3.8]{us0}). One of our goals will be to show that in the ordered Grothendieck category $\bT$ introduced in Section \ref{subse:ord-cat} below, we actually have equality
, and hence the category $\bT$ is Koszul in the following sense.

\begin{definition}\label{def.ksz}
  An ordered Grothendieck category is {\it Koszul} if for every $q\ge 0$ and every two simple objects $S\in \cS_s$ and $T\in \cS_t$ the canonical Yoneda composition map
  \begin{equation*}
    \bigoplus\mathrm{Ext}^1(S,U_1)\otimes \mathrm{Ext}^1(U_1,U_2)\otimes\cdots\otimes \mathrm{Ext}^1(U_{q-1},T)\to \mathrm{Ext}^q(S,T) 
  \end{equation*}
  is surjective, where the sum ranges over all isomorphism classes of simples $U_i$.  
\end{definition}

This mimics one of the characterizations of Koszul connected graded algebras, namely the requirement that the graded ext algebra $\mathrm{Ext}^*(k,k)$ of the ground field $k$ be generated in degree one (\cite[\S 2.1]{PP}). 

We introduce the following term to capture the desirable situation where defects precisely measure non-vanishing exts.

\begin{definition}\label{def.shrp}
  An ordered Grothendieck category is {\it sharp} if it satisfies the conclusion of \Cref{pr.def-ge} with equality rather than inequality. 
\end{definition}

The relevance of the concept to the preceding discussion follows from

\begin{theorem}\label{thm:kosz-gen}
  Assume that $\cC$ is an ordered Grothendieck category as in \Cref{def.ord-gr} 
  such that
  \begin{itemize}
  \item[$-$] the terms of the socle filtration of each indecomposable injective have finite length;
  \item[$-$] $\cC$ is sharp in the sense of \Cref{def.shrp}.    
  \end{itemize}
  Then $\cC$ is Koszul. 
\end{theorem}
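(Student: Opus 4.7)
The plan is to induct on the cohomological degree $q$, using the indecomposable injective hull $I=I(T)$ to factor every $q$-extension as a Yoneda composition of a length-$1$ extension and a $(q-1)$-extension handled by the inductive hypothesis. The base cases $q=0,1$ are trivial. For $q\geq 2$, fix simples $S\in\cS_s$ and $T\in\cS_t$ and a non-zero class $\eta\in\mathrm{Ext}^q(S,T)$; by sharpness, $d(s,t)=q$. Setting $M=I/T$, the sequence $0\to T\to I\to M\to 0$ and injectivity of $I$ yield $\mathrm{Ext}^q(S,T)\cong\mathrm{Ext}^{q-1}(S,M)$, and the long exact sequence for $0\to\mathrm{soc}(M)\to M\to M/\mathrm{soc}(M)\to 0$ will realize $\eta$ as coming from $\mathrm{Ext}^{q-1}(S,\mathrm{soc}(M))$ provided $\mathrm{Ext}^{q-1}(S,M/\mathrm{soc}(M))=0$. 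Since $M/\mathrm{soc}(M)=I/\mathrm{soc}^2(I)$ has a finite socle filtration with semisimple subquotients of finite length by hypothesis, dévissage reduces this vanishing to $\mathrm{Ext}^{q-1}(S,U)=0$ for every simple composition factor $U\in\cS_u$ of $I/\mathrm{soc}^2(I)$.

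The main technical step is the following sublemma: every composition factor of $\mathrm{soc}^{k+1}(I)/\mathrm{soc}^k(I)$ satisfies $d(u,t)\geq k$. Applied with $k\geq 2$, this yields the required vanishing via \Cref{pr.def-ge} and sharpness --- these force $\mathrm{Ext}^{q-1}(S,U)=0$ unless $s\preceq u$ with $d(s,u)=q-1$, but then the concatenation inequality $d(s,u)+d(u,t)\leq d(s,t)=q$ combined with $d(u,t)\geq 2$ gives $q-1\leq q-2$, a contradiction. I would prove the sublemma by induction on $k$; the base $k=0$ is immediate, and for the step, pick a simple $U\hookrightarrow\mathrm{soc}^{k+1}(I)/\mathrm{soc}^k(I)$, lift to $E\subset\mathrm{soc}^{k+1}(I)$, set $F=E\cap\mathrm{soc}^k(I)$, and form
\begin{equation*}
0\to F/\mathrm{soc}^{k-1}(F)\to E/\mathrm{soc}^{k-1}(F)\to U\to 0
\end{equation*}
whose kernel is the semisimple top layer of $F$. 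The central technical obstacle of the entire proof is ruling out splittings of this sequence; the plan is to use $\mathrm{soc}^j(E)=E\cap\mathrm{soc}^j(I)$ to see that $E$ has Loewy length exactly $k+1$, whence $E/\mathrm{soc}^{k-1}(F)$ has Loewy length exactly $2$, whereas a splitting would exhibit it as a direct sum of two semisimples and hence as semisimple of Loewy length $1$. Non-splittness then produces a simple summand $U'$ of $F/\mathrm{soc}^{k-1}(F)$ with $\mathrm{Ext}^1(U,U')\neq 0$; sharpness gives $d(u,u')=1$, the inductive hypothesis gives $d(u',t)\geq k-1$, and concatenation delivers $d(u,t)\geq k$.

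With the vanishing established, write $\mathrm{soc}(M)=\bigoplus_i U_i$ with each $U_i\in\cS_{u_i}$ satisfying $d(u_i,t)=1$ (sublemma at $k=1$). Then $\eta$ is represented by a finite sum of classes $\eta_i\in\mathrm{Ext}^{q-1}(S,U_i)$; the inductive hypothesis expresses each $\eta_i$ as a sum of Yoneda products of $q-1$ length-one classes; and the standard identification of the edge map $\mathrm{Ext}^{q-1}(S,U_i)\to\mathrm{Ext}^q(S,T)$ with right Yoneda multiplication by the class $[\widetilde U_i]\in\mathrm{Ext}^1(U_i,T)$ of $0\to T\to\widetilde U_i\to U_i\to 0$ (where $\widetilde U_i\subset\mathrm{soc}^2(I)$ is the preimage of $U_i$) packages these pieces into a presentation of $\eta$ as a sum of length-$q$ Yoneda products of degree-one classes, closing the induction.
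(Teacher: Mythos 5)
Your overall strategy — induct on $q$, pass through the injective hull $I(T)$ to identify $\mathrm{Ext}^q(S,T)$ with $\mathrm{Ext}^{q-1}(S,M)$, and then reduce to the socle of $M$ via the isomorphism $\mathrm{Ext}^{q-1}(S,\mathrm{soc}\,M)\cong\mathrm{Ext}^{q-1}(S,M)$ — is exactly the paper's. The sublemma you introduce (simple subquotients of $\mathrm{soc}^{k+1}(I)/\mathrm{soc}^k(I)$ have $d(u,t)\geq k$) is a genuinely useful addition; the paper merely asserts the socle isomorphism without comment, and your Loewy-length non-splitting argument for the sublemma is correct (one should note the lift $E$ automatically satisfies $F\not\subseteq\mathrm{soc}^{k-1}(I)$, since otherwise $E/\mathrm{soc}^{k-1}(E)\cong U$ would force the Loewy length of $E$ to be $\le k$, contradicting $E\not\subseteq\mathrm{soc}^k(I)$). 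The concatenation inequality $d(s,u)+d(u,t)\le d(s,t)$ and the chase through sharpness are also correct.

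There is, however, one genuine gap in the dévissage step. You write that $M/\mathrm{soc}(M)=I/\mathrm{soc}^2(I)$ ``has a finite socle filtration,'' but it does not: the paper emphasizes precisely that $I$ has \emph{infinite} Loewy length, so the socle filtration of $I/\mathrm{soc}^2(I)$ is a strictly increasing countable chain. The hypothesis only gives finite length for each \emph{term} $\mathrm{soc}^k(I)$, not a finite filtration. Consequently, knowing $\mathrm{Ext}^{q-1}(S,U)=0$ for all simple subquotients $U$ does not immediately yield $\mathrm{Ext}^{q-1}(S,\,I/\mathrm{soc}^2(I))=0$: one needs $\mathrm{Ext}^{q-1}(S,-)$ to commute with the filtered colimit over the socle truncations, which is not automatic in an arbitrary Grothendieck category. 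This commutation is true for the categories the paper actually cares about (they are locally finite, indeed comodule categories, cf.\ \Cref{thm:tak} and \Cref{th:koszcoalg}), but that identification is established only \emph{after} this theorem, so invoking it here would be circular. A self-contained fix is to descend through a short injective coresolution $0\to N\to J^0\to J^1\to\cdots$ of $N:=I/\mathrm{soc}^2(I)$, where each $J^j$ is a finite direct sum of indecomposable injectives with socles at defect $\ge 2$ from $T$ (your sublemma controls this), so that $\mathrm{Hom}(S,J^{q-1})=0$ and hence $\mathrm{Ext}^{q-1}(S,N)=0$. To be fair, the paper's own proof is equally terse at exactly this point: the isomorphism $\mathrm{Ext}^{q-1}(S,\mathrm{soc}\,R_T)\cong\mathrm{Ext}^{q-1}(S,R_T)$ is stated without justification, so your proposal is a substantial step toward filling a real gap, but the dévissage step as written does not close it.
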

\begin{proof}
  Fix arbitrary simple objects $S\in \cS_s$, $T\in \cS_t$ and a positive integer $q\ge 2$. It will be enough to show that the Yoneda composition
  \begin{equation*}
    \bigoplus_{\text{simple }U}\mathrm{Ext}^{q-1}(S,U)\otimes \mathrm{Ext}^1(U,T)\to \mathrm{Ext}^q(S,T)
  \end{equation*}
  is onto, since we can then proceed by induction on $q$. 

  Let
  \begin{equation*}
    0\to T\to I_T\to R_T\to 0
  \end{equation*}
  be the short exact sequence resulting from the embedding of $T$ into its injective hull $I_T$. This sequence constitutes an element of $\mathrm{Ext}^1(R_T,T)$, and Yoneda multiplication by that element induces an isomorphism
  \begin{equation*}
    \mathrm{Ext}^{q-1}(S,R_T)\cong \mathrm{Ext}^q(S,T). 
  \end{equation*}

  If $\mathrm{Ext}^q(S,T)=0$ there is nothing to prove. Otherwise, our sharpness assumption shows that $d(S,T)=q$. This together with the finite-length hypothesis then ensures that the socle of $Q_T$ is a finite direct sum of simples $U$ with $d(S,U)=q-1$. We furthermore have
  \begin{equation*}
     \mathrm{Ext}^{q-1}(S,\mathrm{soc}R_T) \cong \mathrm{Ext}^{q-1}(S,R_T)\text{,}
  \end{equation*}
  and hence every non-zero element of $\mathrm{Ext}^q(S,T)$ will be contained in the image of the Yoneda map
  \begin{equation*}
    \bigoplus_{U}\mathrm{Ext}^{q-1}(S,U)\otimes \mathrm{Ext}^1(U,T)\to \mathrm{Ext}^q(S,T)
  \end{equation*}
  where $U$ ranges over all isomorphism classes of simple constituents of $\mathrm{soc}R_T$. This finishes the proof.
\end{proof}

\subsection{Comodules}\label{subse:comodules}

\cite[Chapters I and II]{Sweedler} will provide sufficient background on coalgebras and comodules. For a coalgebra $C$ over a ground field $k$ we write $\cM^C$ for its category of right comodules and $\cM^C_{fin}$ for its category of finite-dimensional comodules. Since the Grothendieck categories we are interested in will turn out to be of the form $\cM^C$ for coalgebras $C$ we record in this short section a characterization of such categories from \cite{Tak77}.

The following is a paraphrase of \cite[Definition 4.1]{Tak77}, adapted in the context of {\it Grothendieck} (as opposed to plain abelian) categories.

\begin{definition}\label{def:loc-fin}
  A Grothendieck category is {\it locally finite} if it has a set of finite-length generators. 
\end{definition}

We then have the following recognition result for categories of comodules over fields (\cite[Theorem 5.1]{Tak77}):

\begin{theorem}\label{thm:tak}
  Let $k$ be a field. A $k$-linear Grothendieck category is equivalent to $\cM^C$ for a $k$-coalgebra $C$ if and only if it is locally finite in the sense of \Cref{def:loc-fin} and the endomorphism ring of every simple object is finite-dimensional over $k$.

  Moreover, in this case $\cM^C_{fin}$ can be identified with the subcategory of $\cC$ consisting of finite-length objects.
  \qedhere
\end{theorem}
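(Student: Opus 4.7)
The plan is to prove the two implications of the ``if and only if'' separately, treating sufficiency as the substantive direction. For necessity, assume $\cC = \cM^C$. The fundamental theorem of comodules guarantees that every right $C$-comodule is the directed union of its finite-dimensional subcomodules, so the finite-dimensional comodules form a set of generators of finite length, establishing local finiteness. Each simple comodule is therefore finite-dimensional, and its endomorphism ring is consequently a finite-dimensional division algebra over $k$ by Schur together with a dimension count.

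For sufficiency, I would first show that $\cC$ is the ind-completion of its full subcategory $\cC_0$ of finite-length objects: local finiteness together with exactness of filtered colimits in a Grothendieck category implies that every object is the filtered colimit of its finite-length subobjects. The second step is a length induction showing that $\Hom_\cC(X,Y)$ is finite-dimensional over $k$ for $X, Y \in \cC_0$; this uses Schur's lemma (so $\Hom$ between non-isomorphic simples vanishes) together with the hypothesis that endomorphism rings of simples are finite-dimensional. The upshot is that $\cC_0$ is an essentially small $k$-linear abelian category with finite-dimensional $\Hom$ spaces all of whose objects have finite length.

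The coalgebra is then extracted from an injective cogenerator $E \in \cC$, which exists by standard Grothendieck category theory. One equips $\End_\cC(E)^{op}$ with the linear topology whose open right ideals are the annihilators of finite-dimensional subobjects of $E$; with this topology it becomes a pseudocompact topological $k$-algebra, and its continuous $k$-linear dual $C$ inherits a canonical coalgebra structure. The functor sending $X \in \cC$ to $\Hom_\cC(X, E)$, suitably interpreted as landing in $\cM^C$, should then produce the desired equivalence, restricting on finite-length objects to $\cC_0 \simeq \cM^C_{fin}$.

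The main obstacle is verifying that this functor is genuinely an equivalence --- essentially a Gabriel--Popescu-type theorem in the locally finite setting. One must match discrete module structures over the pseudocompact ring $\End_\cC(E)^{op}$ with $C$-comodule structures and show that every $C$-comodule arises from an object of $\cC$. The finite-dimensionality hypothesis on endomorphism rings of simples enters precisely here, ensuring that the socles of the indecomposable summands of $E$ are finite-dimensional, so that the resulting $C$ is large enough to account for every simple comodule and the functor is essentially surjective. Once the equivalence is in hand, its restriction to finite-length objects on one side and finite-dimensional comodules on the other is immediate from the construction.
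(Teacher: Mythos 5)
The paper offers no proof of this statement: it is quoted verbatim as \cite[Theorem 5.1]{Tak77}, so your attempt to actually reconstruct an argument is already doing more than the text you are being compared against. That said, your outline is broadly the right shape, but it leaves the crux unaddressed and contains one conceptual slip. You correctly reduce to the finite-length subcategory $\cC_0$, correctly note that $\Hom$'s there are finite-dimensional by length induction and Schur, and correctly identify that a coalgebra should be extracted from an injective cogenerator $E=\bigoplus_i E_i$ via the topological/pseudocompact duality on $\End_\cC(E)$. However, the functor $X\mapsto \Hom_\cC(X,E)$ you propose is \emph{contravariant} and lands in topological modules over $\End_\cC(E)$, not in $\cM^C$; ``suitably interpreted'' hides a necessary second dualization, namely passing to $\Hom_\cC(X,E)^{\circ}$ (the continuous $k$-dual), and one must check this composite is covariant, exact, and actually lands in right $C$-comodules. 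This is precisely the nontrivial content of the theorem and cannot be waved away with a reference to a ``Gabriel--Popescu-type theorem.''

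A second point: you locate the role of the hypothesis $\dim_k\End(S)<\infty$ in ``ensuring that $C$ is large enough'' for essential surjectivity. That is not where it bites. Objects of an abstract Grothendieck category have no ambient $k$-dimension, so phrases like ``socles of the indecomposable summands of $E$ are finite-dimensional'' are not meaningful as stated; what is true, and what matters, is that for $X$ of finite length and $E_i$ the hull of $S_i$ one has $\Hom_\cC(X,E_i)$ finite-dimensional (by d\'evissage reducing to $\Hom(S_i,E_i)=\End(S_i)$). This is exactly what makes $\End_\cC(E)$ with the annihilator topology a \emph{pseudocompact} $k$-algebra---open ideals of finite codimension---so that its continuous dual is a genuine $k$-coalgebra $C$ with $C^*\cong\End_\cC(E)$. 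Without that hypothesis the topological dual simply fails to be a coalgebra, and the construction collapses at step one (witness $K$-vector spaces over $k\subsetneq K$ infinite). Necessity is fine as you wrote it; for sufficiency you would need to supply the dualization just described and the verification that the resulting functor $\cC_0\to\cM^C_{fin}$ is fully faithful and essentially surjective, and then pass to ind-completions.
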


The following notion (analogous to its dual- ring-theoretic version \cite[discussion preceding Theorem 2.1]{bass}) will also be relevant below.

\begin{definition}\label{def:perf}
  A coalgebra $C$ is {\it left semiperfect} if either of the following conditions, equivalent by \cite[Theorem 10]{lin}, holds{:}
  \begin{itemize}
  \item[$-$] every indecomposable injective right $C$-comodule is finite dimensional;
  \item[$-$] every finite-dimensional left $C$-comodule has a projective cover.
  \end{itemize}
\end{definition}

\subsection{Tensor categories}\label{subse:mon}

The categories we are most interested in are typically monoidal. The latter, in full generality, are covered for instance in \cite[Chapter XI]{cats}. In the context of abelian categories, we briefly recall the relevant definitions (see also \cite[\S 3.6]{us0}, where we make the same linguistic conventions). 

\begin{definition}\label{def:mon}
  A $\bC$-linear abelian category $\cC$ is {\it monoidal} if its monoidal structure has the property that $x\otimes \bullet$ and $\bullet\otimes x$ are exact endofunctors for every object $x$.

  If in addition the monoidal structure is symmetric, $(\cC,\otimes)$ is a {\it tensor category}.

  A {\it tensor functor} between tensor  categories is a $\bC$-linear symmetric monoidal functor.
\end{definition}

Note that this differs from conventions made elsewhere in the literature. In \cite[\S 1.2]{del-tan}, for instance, the term `cat\'egorie tensorielle' implies rigidity.

We occasionally write $(\cC,\otimes, {\bf 1})$ for a monoidal category to specify both the tensor product bifunctor and the monoidal unit object ${\bf 1}$.

\section{The categories $\bT$ and ${}_I\bT$}\label{se.a}

\subsection{Definition of the object $I$}

For every nonnegative integer $k$ we have a canonical embedding
\begin{equation}\label{eq:skqsubsetsk+1q}
  S^kQ\hookrightarrow S^{k+1}Q
\end{equation}
obtained as the composition

\begin{equation}\label{eq:sksk1}
  \begin{tikzpicture}[auto,baseline=(current  bounding  box.center)]
    \path[anchor=base] (0,0) node (1) {$S^kQ\cong S^kQ\otimes \bC$} +(4,0) node (2) {$S^kQ\otimes Q$} +(8,0) node (3) {$S^{k+1}Q$,};
    \draw[->] (1) to[bend left=0] node[pos=.5,auto] {$\scriptstyle \id\otimes\iota$} (2);
    \draw[->] (2) to[bend left=0] node[pos=.5,auto] {$\scriptstyle \text{multiplication}$} (3);
  \end{tikzpicture}
\end{equation}
where $\iota:\bC\to Q$ is the embedding defining $Q$ as an extension of $F$ by $\bC$. This gives rise to an exact sequence

\begin{equation}\label{eq:1}
  \begin{tikzpicture}[auto,baseline=(current  bounding  box.center)]
    \path[anchor=base] (0,0) node (01) {$0$} +(2,0) node (l) {$S^kQ$} +(4,0) node (m) {$S^{k+1}Q$} +(6,0) node (r) {$S^{k+1}F$} +(8,0) node (02) {$0$};
    \draw[->] (01) to[bend left=0] node[pos=.5,auto] {$\scriptstyle $} (l);
    \draw[->] (l) to[bend left=0] node[pos=.5,auto] {$\scriptstyle \iota$} (m);
    \draw[->] (m) to[bend left=0] node[pos=.5,auto] {$\scriptstyle \pi$} (r);
    \draw[->] (r) to[bend left=0] node[pos=.5,auto] {$\scriptstyle $} (02);
  \end{tikzpicture}
\end{equation}

Taking the colimit (or simply union)
\begin{equation}\label{eq:12}
  I:=\varinjlim_k S^kQ,
\end{equation}
we obtain a $\fg$-module that has an infinite ascending filtration representable schematically as 


\begin{equation}\label{eq:ifilt}
  \begin{tikzpicture}[auto,baseline=(current  bounding  box.center)]
    \path (0,0) node[name=1, minimum width=1cm, rectangle split, rectangle split parts=3, draw, rectangle split draw splits=true, align=center] 
    { $S^2F$
      \nodepart{two} $F$
      \nodepart{three} $\bC$
    } + (0,1.5) node {$\vdots$};
	\path (0.6,-0.9) node {$,$};
    
  \end{tikzpicture}
\end{equation}
where the boxes indicate the layers of the filtration.


The morphism
\begin{equation}\label{eq:2}
  \psi: I\to I/\bC \to F\otimes I
\end{equation}
to be defined below will play a central role in the sequel; we will occasionally write $\psi$ for the resulting factorization $I/\bC\to F\otimes I$ as well, leaving it to context to distinguish between the two possible meanings.

We obtain the morphism $\psi$ as a colimit $\varinjlim_k \psi^k$ where
\begin{equation}\label{eq:psik}
  \psi^k:S^{k}Q\to (S^{k}Q)/\bC \to F\otimes S^{k-1}Q.
\end{equation}
The latter map is defined as follows. First, recall that the symmetric algebra
\begin{equation*}
  S^{\bullet}Q = \bigoplus_{k\ge 0}S^kQ
\end{equation*}
has a graded Hopf algebra structure \cite[p.228]{Sweedler} making the degree-one elements {\it primitive}, i.e. such that the comultiplication
\begin{equation}\label{eq:delta}
  \Delta: S^{\bullet}Q\to S^{\bullet}Q\otimes S^{\bullet}Q
\end{equation}
is the unique algebra map defined by
\begin{equation*}
  S^{\bullet}Q\supset Q\ni v\mapsto v\otimes 1\oplus 1\otimes v\in (Q\otimes\bC)\oplus (\bC\otimes Q)\subset S^{\bullet}Q\otimes S^{\bullet}Q. 
\end{equation*}
The comultiplication \Cref{eq:delta} is a morphism of $\fg$-modules. By definition, \Cref{eq:psik} is given by

\begin{equation}\label{eq:3}
  \begin{tikzpicture}[auto,baseline=(current  bounding  box.center)]
    \path[anchor=base] (0,0) node (1) {$S^{k}Q$}
    +(4,1) node (2) {$Q\otimes S^{k-1}Q$}
    +(8,0) node (3) {$F\otimes S^{k-1}Q$}
    +(4,-1) node (4) {($S^{k}Q)/\bC$};
    \draw[->] (1) to[bend left=6] node[pos=.5,auto] {$\scriptstyle $} (2);
    \draw[->] (2) to[bend left=6] node[pos=.5,auto] {$\scriptstyle \pi\otimes\id$} (3);
    \draw[->] (1) to[bend right=6] node[pos=.5,auto,swap] {$\scriptstyle $} (4);
    \draw[->] (4) to[bend right=6] node[pos=.5,auto,swap] {$\scriptstyle $} (3);
    \draw[->] (1) to[bend right=0] node[pos=.5,auto] {$\scriptstyle \psi^k$} (3);    
  \end{tikzpicture}
\end{equation}
where
\begin{itemize}
\item[$-$] the upper left{-}hand arrow is the $Q\otimes S^{k-1}Q$-component of the comultiplication
  \begin{equation*}
    \Delta:S^kQ\to \bigoplus_{i=0}^k S^iQ\otimes S^{k-i}Q
  \end{equation*}
  described above.
\item[$-$] $\pi$ is the epimorphism fitting in \Cref{eq:1}.
\end{itemize}

To make sense of $\varinjlim_k \psi^k$ we would have to argue that these maps are compatible with the embeddings
\begin{equation*}
  \iota:S^kQ\hookrightarrow S^{k+1}Q
\end{equation*}
in \Cref{eq:1}, i.e. that all diagrams
\begin{equation*}
 \begin{tikzpicture}[auto,baseline=(current  bounding  box.center)]
  \path[anchor=base] 
  (0,0) node (l) {$S^kQ$} 
  +(6,0) node (r) {$F\otimes S^{k}Q$}
  +(3,.5) node (u) {$S^{k+1}Q$}
  +(3,-.5) node (d) {$F\otimes S^{k-1}Q$}
  ;
  \draw[->] (l) to[bend left=6] node[pos=.5,auto] {$\scriptstyle \iota$} (u);
  \draw[->] (u) to[bend left=6] node[pos=.5,auto] {$\scriptstyle \psi^{k+1}$} (r);
  \draw[->] (l) to[bend right=6] node[pos=.5,auto,swap] {$\scriptstyle \psi^k$} (d);
  \draw[->] (d) to[bend right=6] node[pos=.5,auto,swap] {$\scriptstyle \id\otimes\iota$} (r);  
 \end{tikzpicture}
\end{equation*}
commute (for arbitrary $k$). This can be seen by direct examination, fixing a basis $(v_{\alpha})_{\alpha}$ for $Q$ with a distinguished element $v_0=1\in \bC\subset Q$ and noting that the upper left{-}hand map in \Cref{eq:3} is defined on monomials by
\begin{equation}\label{eq:expldelta}
  v_{\alpha_1}\cdots v_{\alpha_k}\mapsto \sum_{i=1}^k v_{\alpha_i}\otimes v_{\alpha_1}\cdots v_{\alpha_{i-1}}v_{\alpha_{i+1}}\cdots v_{\alpha_k}. 
\end{equation}


\begin{lemma}\label{le.correct-ker}
  The kernel of $\psi:I\to F\otimes I$ is precisely $\bC$.
\end{lemma}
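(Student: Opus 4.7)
The plan is a direct basis computation. The inclusion $\bC\subset\ker\psi$ is automatic because $\psi$ factors through $I/\bC$ by construction (\Cref{eq:2}), so the real content is the reverse inclusion.

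I fix a basis $(v_\alpha)_{\alpha\ge 0}$ of $Q$ with $v_0=\iota(1)$, as in the discussion following \Cref{eq:3}. Since the transition map $\iota\colon S^kQ\hookrightarrow S^{k+1}Q$ in \Cref{eq:sksk1} is multiplication by $v_0$, any monomial $v_0^{\,j}\,v_{\alpha_1}\cdots v_{\alpha_\ell}$ (with all $\alpha_i\ge 1$) represents the same element of $I$ as $v_{\alpha_1}\cdots v_{\alpha_\ell}$. A routine linear-independence check inside $S^kQ$ for large $k$ shows that
\begin{equation*}
\{1\}\,\cup\,\{v^\lambda:=v_{\alpha_1}\cdots v_{\alpha_\ell}\,:\,\ell\ge 1,\ 1\le\alpha_1\le\cdots\le\alpha_\ell\}
\end{equation*}
is a basis of $I$.

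Next I compute $\psi$ on each basis vector. Applying \Cref{eq:expldelta} and then $\pi\otimes\id$ to the representative $v^\lambda=v_{\alpha_1}\cdots v_{\alpha_\ell}\in S^{\ell}Q$ (all $\alpha_i\ge 1$), I obtain
\begin{equation*}
\psi(v^\lambda)\;=\;\sum_{i=1}^{\ell}\bar v_{\alpha_i}\otimes v_{\alpha_1}\cdots\widehat{v_{\alpha_i}}\cdots v_{\alpha_\ell}\;=\;\sum_{\alpha\ge 1}m_\alpha(\lambda)\,\bar v_\alpha\otimes v^{\lambda-e_\alpha}\qquad(\ell\ge 1),
\end{equation*}
where $\bar v_\alpha:=\pi(v_\alpha)$, $m_\alpha(\lambda)$ is the multiplicity of $\alpha$ in $\lambda$, and $e_\alpha$ is the unit multi-index at $\alpha$; and of course $\psi(1)=0$.

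The conclusion is then linear-algebraic. The family $\{\bar v_\alpha\otimes v^\mu\}_{\alpha\ge 1,\,\mu}$ is a basis of $F\otimes I$, and the coefficient of $\bar v_\alpha\otimes v^\mu$ in $\psi(x)$, for $x=c_\emptyset+\sum_{\lambda\neq\emptyset}c_\lambda v^\lambda$, equals $(m_\alpha(\mu)+1)\,c_{\mu+e_\alpha}$, a nonzero scalar multiple of $c_{\mu+e_\alpha}$. Hence $\psi(x)=0$ forces $c_\lambda=0$ for every nonempty $\lambda$, i.e.\ $x\in\bC$. I do not expect any real obstacle: once the basis is identified, the argument reduces to the observation that the basis element $\bar v_\alpha\otimes v^\mu$ receives contributions from the unique preimage $v^{\mu+e_\alpha}$, with coefficient the positive integer $m_\alpha(\mu+e_\alpha)$.
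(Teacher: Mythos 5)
Your proof is correct and follows essentially the same route as the paper: both arguments reduce the claim to a direct computation with the explicit formula \Cref{eq:expldelta} for the partial comultiplication. The paper works level-by-level on the $S^kQ$ and leaves the final coefficient check as ``easily seen,'' whereas you make it fully explicit by fixing a monomial basis $\{v^\lambda\}$ of the colimit $I$ and reading off coefficients in $F\otimes I$ -- a clean way to spell out the same verification.
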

\begin{proof}
  The kernel of the upper right-hand map in \Cref{eq:3} is
  \begin{equation}\label{eq:ck-1}
   \mathbb{C}\otimes S^{k-1}Q\subset Q\otimes S^{k-1}Q,
  \end{equation}
  so we are in effect claiming that the preimage of \Cref{eq:ck-1} through the ``partial comultiplication''
  \begin{equation}\label{eq:partdelta}
    S^kQ\to Q\otimes S^{k-1}Q
  \end{equation}
  is $\bC\subset S^kQ$.

  This is easily seen from the explicit description \Cref{eq:expldelta} of the comultiplication \Cref{eq:partdelta}.
\end{proof}


\subsection{Order}\label{subse:ord}

Following (or rather amplifying) \cite{us0}, we order the quadruples $(l,m,n,p)$ of non-negative integers by setting
\begin{equation*}
  (l,m,n,p)\preceq (l',m',n',p')
\end{equation*}
precisely if
\begin{align*}
  l\ge l',\quad m\le m',&\quad p\ge p',\quad n\le n'\\
  l+m\le l'+m',\quad&\quad p+n\le p'+n'\numberthis \label{eq:ord}\\
  l+m-n-p&=l'+m'-n'-p'.
\end{align*}

For a quadruple $s=(l,m,n,p)$ we define a family $\Sigma_s$ of morphisms
\begin{equation*}
  I_s=J_s\otimes I\to J_{s'}\otimes I=I_{s'}
\end{equation*}
for various $s'\prec s$ as follows:
\begin{itemize}
\item[$-$] first, those of the form $\theta\otimes\id_I$ where $\theta\in \Theta_s$ as in \cite[\S 3.2]{us0}.
\item[$-$] secondly, $\id_{J_s}\otimes \psi_0$ where
  \begin{equation}\label{eq:16}
    \psi_0:I\to F\otimes I
  \end{equation}
  is the morphism in \Cref{eq:2}. 
\end{itemize}

The morphisms of the first type are such that their joint kernel is
\begin{equation*}
  L_{l,m,n,p}\otimes I\subseteq  J_{l,m,n,p}\otimes I=I_{l,m,n,p}. 
\end{equation*}
On the other hand the kernel of $\psi_0$ is $\bC\subset I$, and hence the joint kernel of $\Sigma_s$ is
\begin{equation}\label{eq:4}
  L_{l,m,n,p}\cong L_{l,m,n,p}\otimes \bC\subset J_{l,m,n,p}\otimes I=I_{l,m,n,p}. 
\end{equation}

We now want to argue that \Cref{eq:4} is precisely the inclusion of the socle:

\begin{proposition}\label{pr.soc}
  For every choice of non-negative integers $l$, $m$, $n$, and $p$ the object $L_{l,m,n,p}$ is the socle of $I_{l,m,n,p}$ via the inclusion \Cref{eq:4}. 
\end{proposition}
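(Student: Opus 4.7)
The plan is to prove the two inclusions separately. The inclusion $L_{l,m,n,p}\subseteq \mathrm{soc}(I_{l,m,n,p})$ is essentially free: by \cite[Corollary 4.3]{us0}, $L_{l,m,n,p}$ is semisimple, and it is embedded in $I_{l,m,n,p}$ via \Cref{eq:4}. The substantive content is the reverse inclusion, namely that every simple subobject $M\subseteq I_{l,m,n,p}$ lies in $L_{l,m,n,p}$.

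My approach is to exploit the ascending exhausting filtration $\Phi_k:=J_s\otimes S^kQ$ of $I_s=I_{l,m,n,p}$, with successive quotients $\Phi_k/\Phi_{k-1}\cong J_s\otimes S^kF$ coming from \Cref{eq:1}. The base layer $\Phi_0=J_s$ has socle $L_s$ by \cite[Corollary 4.3]{us0}, so it is enough to show $\mathrm{soc}(\Phi_k)=\mathrm{soc}(\Phi_{k-1})$ for every $k\ge 1$, i.e.\ that for every simple subobject $N\subseteq J_s\otimes S^kF$ the extension
\[
  0\to \Phi_{k-1}\to \Phi_k\to J_s\otimes S^kF\to 0
\]
is non-split when pulled back along $N\hookrightarrow J_s\otimes S^kF$. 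The simples $N$ can be described using Proposition~\ref{pr.plth}(\labelcref{item:1}) (the plethysm $S^kF=\bigoplus_{|\lambda|=k}\mathbb{S}_\lambda W_*\otimes\mathbb{S}_\lambda W$) and the Pieri rule applied to $W_*^{\otimes l}\otimes\mathbb{S}_\lambda W_*$ and $W^{\otimes p}\otimes\mathbb{S}_\lambda W$: their thick Young-diagram indices have sizes $(l+k,p+k)$, strictly bigger than the indices $(l,p)$ of simples appearing in $L_s$, so in particular $N$ is never isomorphic to any simple in $L_s$.

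The non-splitting itself is detected by the iterated map $\id_{J_s}\otimes\psi_0^{(k)}\colon I_s\to I_{(l+k,m,n,p+k)}$: its kernel is $\Phi_{k-1}$ by iterated application of \Cref{le.correct-ker}, so a lift of $N$ splitting off the extension above would give an injective copy of $N$ in $I_{(l+k,m,n,p+k)}$ compatible with the map $N\hookrightarrow J_s\otimes S^kF$. The explicit monomial formula \Cref{eq:expldelta} for the partial comultiplication on $S^\bullet Q$ shows that this pair of requirements is inconsistent: $\psi_0$ applied to the generators of $S^kQ$ always produces a contribution in the ``wrong'' layer of the target filtration.

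The main obstacle is making this last non-splitting step uniform across all simple summands $N$ and all $k\ge 1$. The shape bookkeeping supplied by plethysm plus Pieri is what keeps the combinatorics manageable; it reduces the problem to tracking how $\psi_0$ acts on the primitive generators of the Hopf algebra $S^\bullet Q$, where the behaviour is governed directly by the extension $0\to\bC\to Q\to F\to 0$ defining $Q$.
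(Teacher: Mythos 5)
Your reduction is sound up to the point where you isolate the key claim: for every simple $N\subseteq J_s\otimes S^kF$ the pullback of $0\to\Phi_{k-1}\to\Phi_k\to J_s\otimes S^kF\to 0$ along $N$ must be shown non-split. That is indeed equivalent to $\mathrm{soc}(\Phi_k)=\mathrm{soc}(\Phi_{k-1})$, and your Pieri/plethysm observation that the thick indices of such $N$ are strictly larger than $(l,p)$ is correct. But this observation is not where the difficulty lies, and the mechanism you propose for the non-splitting does not produce a contradiction.

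Here is the problem. You appeal to $\id_{J_s}\otimes\psi_0^{(k)}\colon\Phi_k\to J_s\otimes F^{\otimes k}\otimes I$ with kernel $\Phi_{k-1}$, and claim a hypothetical lift $\tilde N$ would be injected ``into the wrong layer.'' In fact the monomial formula \Cref{eq:expldelta} shows that $\psi_0^{(k)}$ maps $S^kQ$ into $F^{\otimes k}\otimes\bC$, so the restriction of $\id_{J_s}\otimes\psi_0^{(k)}$ to $\Phi_k$ factors as the composite of the quotient $\Phi_k\to\Phi_k/\Phi_{k-1}\cong J_s\otimes S^kF$ with the canonical inclusion $J_s\otimes S^kF\hookrightarrow J_s\otimes F^{\otimes k}$. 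Applying this to a lift $\tilde N$ therefore gives nothing beyond the embedding $N\hookrightarrow J_s\otimes F^{\otimes k}$ one already had; no filtration degree is ever ``wrong,'' and no inconsistency arises. The map carries no information about whether the extension splits. Moreover, an attempt to rescue the argument by induction on the poset fails: the target index $(l+k,m,n,p+k)$ is incomparable to $(l,m,n,p)$, and the total degree $l+m+n+p$ increases, so there is no well-founded induction available along this route.

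What is genuinely needed, and what the paper supplies, is control over how $\fg$ acts, not just over the Hopf-algebraic combinatorics. The paper proves essentiality directly: first it restricts to $\fsl(\infty)$ to strip away the $V_{m,n}$ factor; then \Cref{le.tens-dense} (a density lemma for an ideal acting densely on a tensor factor) reduces to $m=n=0$; finally \Cref{pr.ann} shows, by explicitly producing elements of the annihilator subalgebra $\fK_F\subset\fg$ of a finite set $F\subset W\cup W_*$ that lower degree in $S^kQ$, that any nonzero vector in $W_*^{\otimes l}\otimes W^{\otimes p}\otimes S^kQ$ generates a submodule meeting $W_*^{\otimes l}\otimes W^{\otimes p}$. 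That Lie-algebraic degree-lowering mechanism is the missing ingredient in your sketch: without some analogue of it, the extension classes you need to be non-zero are not pinned down.
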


This will require some preparation. First, we have the following remark, in the spirit of \cite[Lemma 3.1]{us0}. 

\begin{lemma}\label{le.tens-dense}
  Let $\fG$ be a Lie algebra and $I\subseteq \fG$ be an ideal. Suppose $U\subseteq U'$ is an essential inclusion of $\fG/I$-modules and $D$ be a $\fG$-module on which $I$ acts densely. 
  Then the inclusion
  \begin{equation*}
    U\otimes D\subseteq U'\otimes D
  \end{equation*}
  is also essential.
\end{lemma}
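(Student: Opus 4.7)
The plan is a Jacobson-density argument, patterned on \cite[Lemma 3.1]{us0}. I will show that every nonzero $\fG$-submodule $N\subseteq U'\otimes D$ meets $U\otimes D$ nontrivially. Fix a nonzero $w\in N$ and write $w=\sum_{i=1}^n u_i\otimes d_i$ with $d_1,\ldots,d_n$ linearly independent in $D$ and each $u_i\in U'$ nonzero; this is always possible after choosing a basis of the span of the $d_i$'s and discarding vanishing $u_i$'s.

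Since $U'$ is a $\fG/I$-module, the ideal $I$ annihilates $U'$, so for any $X\in I$ one has $X\cdot w=\sum_i u_i\otimes X d_i$. I will read the hypothesis ``$I$ acts densely on $D$'' in the Jacobson sense, namely that given linearly independent $d_1,\ldots,d_n\in D$ and arbitrary $e_1,\ldots,e_n\in D$, there is some $X\in I$ with $X d_i=e_i$ for all $i$. Under this interpretation, $N$ contains every element of the form $\sum_i u_i\otimes e_i$ with $e_i\in D$; specializing $e_1\in D$ arbitrarily and $e_i=0$ for $i\ge 2$, I conclude that $u_1\otimes D\subseteq N$.

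The key step is then to introduce the auxiliary $\bC$-subspace
\[
S:=\{u\in U'\mid u\otimes D\subseteq N\}\subseteq U',
\]
which contains $u_1\ne 0$. I will verify that $S$ is in fact a $\fG$-submodule of $U'$: for $u\in S$, $Y\in\fG$ and $d\in D$, both $Y\cdot(u\otimes d)=(Yu)\otimes d+u\otimes(Yd)$ and $u\otimes(Yd)$ lie in $N$, whence $(Yu)\otimes d\in N$ for every $d$, i.e.\ $Yu\in S$. The essentiality of $U\subseteq U'$ then forces $S\cap U\ne 0$; picking a nonzero $u$ there and any nonzero $d\in D$ produces $0\ne u\otimes d\in N\cap(U\otimes D)$, as required.

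I do not expect a real obstacle. The only delicate point is confirming the reading of ``acts densely'' as Jacobson density (so that finite systems of values can be prescribed simultaneously); granted that, the proof is routine, and the one piece of genuine content is the introduction of the submodule $S$, which transports the essentiality of $U\subseteq U'$ across the tensor product with $D$.
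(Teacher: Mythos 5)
Your proof is correct, and it takes a genuinely different route from the paper's. The paper proceeds by iterating over the tensorands: it first establishes a lifting claim, namely that for each $\overline{x}\in\fG/I$ one can choose a lift $x\in\fG$ acting on a simple tensor $u\otimes w$ as $\overline{x}u\otimes w$ (using density to cancel the $u\otimes xw$ contribution), and then repeatedly acts on $f=\sum u_i\otimes w_i$ to push the coefficients $u_i$ into $U$ one at a time, with the essentiality of $U\subseteq U'$ invoked at each step. Your argument instead uses density once to zero out all tensorands but the first while varying the first freely, obtaining $u_1\otimes D\subseteq N$; it then packages the rest of the argument into the auxiliary subspace $S=\{u\in U'\mid u\otimes D\subseteq N\}$, verifies $S$ is a $\fG$-submodule (hence, since the $\fG$-action on $U'$ factors through $\fG/I$, a $\fG/I$-submodule), and applies essentiality a single time. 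This buys you something real: by applying essentiality to the whole submodule $S$ rather than trying to realize an element of $U(\fG/I)$ acting on $u_1$ through a sequence of Lie algebra elements, you avoid the bookkeeping implicit in the paper's recursion (where one must check that later actions do not spoil coefficients already pushed into $U$, and that general enveloping-algebra elements, not just monomials in $\fG/I$, are accessible). Both proofs read ``$I$ acts densely on $D$'' in the same Jacobson sense -- the paper's step of matching $aw_i=yw_i$ for all $i$ simultaneously already requires prescribing values on a linearly independent finite family -- so your interpretation is the same one the paper relies on, not a stronger assumption.
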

\begin{proof}
  Let $u\otimes w\in U'\otimes D$ be a non-zero simple tensor. We claim that for every $\overline{x}\in \fG/I$, there is an $x\in \fG$ such that:
  \begin{itemize}
  \item[$-$]$x$ has image $\overline{x}$ modulo $I$;
  \item[$-$] $x(u\otimes w)=\overline{x}u\otimes w$.   
  \end{itemize}
  To see this, first choose an arbitrary $y\in \fG$ with image $\overline{x}$ in $\fG/${$I$}. We have
  \begin{equation*}
    y(u\otimes w) = \overline{x}u\otimes w + u\otimes yw.
  \end{equation*}
  On the other hand, by the density assumption there is some $a\in I$ such that $aw=yw$, and we can simply set $x=y-a$.

  Now let $w_1$ up to $w_k$ be linearly independent vectors in $D$ and consider an  element
  \begin{equation*}
    f:=\sum_{i=1}^k u_i\otimes w_i\in U'\otimes D.
  \end{equation*}
Using the fact that $U\subseteq U'$ is essential and the claim above, there is an element $x\in \fG$ such that 
\begin{equation}\label{eq:5}
  xf = \sum_{i=1}^k q_i\otimes w_i
\end{equation}
with $q_1\in U$. We can now repeat the procedure, acting on the right-hand side of \Cref{eq:5} with another element $x'\in \fG$ so that the resulting sum is of the same form, with $q_1$ and $q_2$ in $U$. By recursion, we will eventually obtain an element of the form
\begin{equation*}
  \sum_i q_i\otimes w_i\in U\otimes D
\end{equation*}
in the $U(\fG)$-submodule of $U'\otimes D$ generated by $f$. This finishes the proof.
\end{proof}

The following result will require some additional conventions and elaboration. Recall that $Q$ is the quotient
  \begin{equation*}
    V^*\otimes V^*_*/(\text{traceless tensors in }V^*\otimes V + V_* \otimes   V^*_*).
  \end{equation*}
  We noted above that we identify the original space $ V^*\otimes V^*_*$ with finite-rank infinite matrices, and hence the quotient consists of equivalence classes of such matrices, where two are declared equivalent whenever they differ along at most finitely many rows or columns (and this is extended minimally to an equivalence relation).

We fix a basis $\{e_{\alpha}\}_{\alpha\in A}$ of $Q$ as follows: 

\begin{itemize}
\item[$-$] $e_0=1\in \bC$.
\item[$-$] All other basis elements are classes of rank-1 matrices of the form $v^* \otimes  v $ for $v^*\in V^*$ and   $v\in V^*_*$.   
\end{itemize}

\begin{lemma}\label{le.prscr-ann}
  Let $X\subset V_*^*-V$ and $X^*\in V^*-V_*$ be finite subsets, linearly independent modulo $V$ and respectively $V_*$. Fix $x_0\in X$ and $x_0^*\in X^*$. There is an element $g\in \fg$ such that
  \begin{itemize}
  \item[$-$] $gx\in V$ for all $x\in X$,   
  \item[$-$] $gx^*\in V_*$ for all $x^*\in X^*$,
  \item[$-$] $g(  x_0^* \otimes  x_0)$ has non-zero trace,
  \item[$-$] $g(x^*\otimes x )$ has zero trace for all other choices of $x\in X$ and $x^*\in X^*$.
  \end{itemize}
\end{lemma}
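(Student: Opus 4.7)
\textit{Proof plan.}

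The approach is to reformulate the lemma's conclusions via the quotient $Q$ and then construct $g$ explicitly after normalizing the representatives. For $g \in \fg$ satisfying the first two conditions $gx \in V$ and $x^* g \in V_*$, both terms of
\[
\tau(x^*, g, x) := x^*(gx) - (x^*g)(x)
\]
are well-defined finite sums, and a short computation shows that the class of $g \cdot (x^* \otimes x) = (gx)\otimes x^* - x \otimes (x^*g)$ in $Q$ lies in the line $\bC \cdot e_0$ with coefficient $\tau(x^*, g, x)$: the pieces $(gx)\otimes x^*$ and $x \otimes (x^*g)$ belong to $V^* \otimes V$ and $V_* \otimes V_*^*$ respectively, which project via trace onto $\bC \cdot e_0$. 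Moreover, $\tau$ is invariant under replacing $x$ by $x + v$ for $v \in V$ --- this follows from the identity $x^*(gv) = (x^*g)(v)$, a Fubini argument applied to the finite double sum $\sum_{i,j} x^*_i g_{ij} v_j$ --- and symmetrically under $x^* \mapsto x^* + v^*$ for $v^* \in V_*$; the density conditions $gx \in V$ and $x^*g \in V_*$ are preserved likewise. We may therefore freely modify the representatives of $X$ and $X^*$ modulo $V$ and $V_*$. Fixing bases $(e_i)_{i \ge 0}$ of $V$ and $(e_i^*)_{i \ge 0}$ of $V_*$ dual under the pairing, a standard Gaussian-elimination argument on tails produces finite-support row vectors $f_1,\ldots,f_k \in V_*$ with $f_\beta(x_\alpha) = \delta_{\alpha\beta}$ and, symmetrically, finite column vectors $v_1,\ldots,v_l \in V$ with $x^*_\beta(v_\gamma) = \delta_{\beta\gamma}$; by choosing disjoint supports we may additionally arrange $f_\beta(v_\gamma) = 0$. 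After subtracting appropriate finite parts, the $x_\alpha$ and $x^*_\beta$ are supported in indices exceeding a common threshold $N_0$ greater than every index appearing in any $f_\beta$ or $v_\gamma$.

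We then take $g = v_{\gamma_0} \otimes f_{\alpha_0} + g_\infty$, writing $x_0 = x_{\alpha_0}$, $x^*_0 = x^*_{\gamma_0}$. The first summand is the rank-one matrix in $V \otimes V_* \subseteq \fg$ with column $v_{\gamma_0}$ and row $f_{\alpha_0}$; the second is an infinite-support piece supported in indices $> N_1$ for some $N_1 > N_0$. The dual relations yield $(v_{\gamma_0} \otimes f_{\alpha_0})x_\alpha = \delta_{\alpha\alpha_0} v_{\gamma_0} \in V$ and $x^*_\beta(v_{\gamma_0} \otimes f_{\alpha_0}) = \delta_{\beta\gamma_0}f_{\alpha_0} \in V_*$, so the density conditions hold at the rank-one level. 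Being finite-rank, that summand contributes zero to every $\tau$-value by ordinary cyclicity of the trace on a finite index set; its role is purely to anchor $gx_{\alpha_0}$ and $x^*_{\gamma_0}g$ at the right nonzero values in $V, V_*$. The trace signature is carried entirely by $g_\infty$, modeled on the prototype $\sum_{i \ge N_1}(e_i - e_{i+1}) \otimes e_i^*$ that handles the case $k = l = 1$ with $x = \sum_i e_i$, $x^* = \sum_i e^*_i$: a telescoping computation yields $gx = e_{N_1}$, $x^*g = 0$, and $\tau(x^*, g, x) = 1$, confirming that infinite-support elements of $\fg$ with the required trace property exist in the simplest case.

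The main obstacle is constructing $g_\infty$ in the general case so that $g_\infty X \subseteq V$, $X^* g_\infty \subseteq V_*$, and exactly the single trace $\tau(x^*_{\gamma_0}, g_\infty, x_{\alpha_0})$ is nonzero. Infinite support is essential --- any finite-rank $g$ yields $\tau \equiv 0$ --- so $g_\infty$ must reconcile infinite support with compatibility against the ``tail directions'' carved out by $X$ and $X^*$. We would construct $g_\infty$ inductively along $\bN$: at each index $i > N_1$, the new row of $g_\infty$ is chosen in $V_*$ subject to $k$ linear conditions (annihilating each $x_\alpha$ in the pairing), and the new column in $V$ subject to $l$ dual conditions (annihilation by each $x^*_\beta$), together with a single propagation equation encoding how the telescoping ``deficit'' passes between indices. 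The linear independence hypotheses on the tails of $X$ and $X^*$ --- together with the dual data $f_\beta, v_\gamma$ from the normal-form step --- guarantee solvability at every stage, and the finite row/column condition ensures $g_\infty \in \fg$. Verifying global consistency --- so that the cumulative trace signature of $g_\infty$ is a single pointmass at $(x^*_{\gamma_0}, x_{\alpha_0})$ and no spurious contributions accumulate --- is the most technically demanding step.
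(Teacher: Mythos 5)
Your opening observations are sound: you correctly identify that the ``trace'' of $g\cdot(x^*\otimes x)$ equals $\tau(x^*,g,x) := x^*(gx) - (x^*g)(x)$, that $\tau$ is unchanged under perturbations of $x$ by $V$ and of $x^*$ by $V_*$, and that any \emph{finite-rank} $g$ forces $\tau\equiv 0$ by Fubini. The rank-one telescoping prototype for $k=l=1$ is also verified correctly. These are genuine insights about the structure of the problem.

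However, there is a real gap, and it is exactly where you flag it. The last paragraph is not a proof: you write ``we would construct $g_\infty$ inductively along $\bN$'' and conclude with ``Verifying global consistency \ldots is the most technically demanding step'' without carrying it out. With several linearly independent tails in $X$ and $X^*$, the inductive specification of rows and columns of $g_\infty$ must simultaneously satisfy $k$ annihilation conditions on each new row, $l$ on each new column, keep rows and columns finite, \emph{and} make exactly one $\tau$-value nonzero --- none of this is demonstrated. The burden of proof for the lemma is precisely this construction, so the proposal remains a sketch.

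There is also a conceptual misdirection. You write that the finite-rank piece $v_{\gamma_0}\otimes f_{\alpha_0}$ serves ``to anchor $gx_{\alpha_0}$ and $x^*_{\gamma_0}g$ at the right nonzero values.'' But the lemma places no requirement that $gx_0$ or $gx_0^*$ be nonzero; only the trace must be nonzero, and you yourself observe the finite-rank summand contributes zero to every $\tau$. This piece of the construction is doing no work, and its presence suggests a misreading of what is being asked.

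The paper's proof goes a different and much shorter route. It invokes the density of the $\fg$-action on a finite linearly-independent (mod $V$, resp.\ $V_*$) family: one may prescribe all the values $gx \in V$, $x\in X$, and $gx^* \in V_*$, $x^*\in X^*$, simultaneously and freely. Given this, one simply chooses $g$ so that $gx=0$ for all $x\in X$, $gx^*=0$ for all $x^*\ne x_0^*$, and $gx_0^*\in V_*$ pairs nontrivially with $x_0$ but trivially with the other $x\in X$ (possible since the elements of $X$ are linearly independent in $V_*^*$). Then $\tau(x^*,g,x) = -(x^*g)(x)$ vanishes for every pair except $(x_0^*,x_0)$, where it is nonzero. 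In other words: setting $gx=0$ is not an obstacle to a nonzero trace, because the trace comes from the $x^*g$ side; this is the key simplification your approach does not exploit. If you want to salvage your constructive route, the clean path is to prove the density statement directly (essentially a tail-cleaning argument using a basis of $V$ and the dual basis of $V_*$), at which point the explicit $g_\infty$ becomes unnecessary.
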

\begin{proof}
  The conclusion will follow from the remark that $\fg$ acts densely on sets $X\cup X^*$, i.e. that given $x\in X$ and $x^*\in X^*$, the vectors $gx\in V \text{ and }gx^* \in V_*$ can be prescribed arbitrarily. Keeping this in mind, we can then find $g\in \fg$ such that
  \begin{itemize}
  \item[$-$] $gx=0$ for all $x\in X$, 
  \item[$-$] $gx^*=0$ for all $x\in X^*\setminus\{x_0^*\}$, 
  \item[$-$] the inner product of $gx_0^*$ with every $x\in X\setminus\{x_0\}$ vanishes, 
  \item[$-$] the inner product of $gx_0^*$ with $x_0$ does not vanish.
  \end{itemize}
  This choice will meet the requirements of the statement, hence the conclusion.
\end{proof}

\begin{proposition}\label{pr.ann}
  Let $F\subset W\cup W_*$ be a finite set of vectors and
  \begin{equation*}
   \fK_{F}:=\mathrm{Ann}_{\fg}F\subset\fg 
  \end{equation*}
  be the Lie subalgebra that annihilates all elements of $F$. Then for every positive integer $k$ the inclusion
  \begin{equation*}
    \bC\subset S^kQ
  \end{equation*}
  is essential over $\fK=\fK_F$.
\end{proposition}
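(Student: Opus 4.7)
The plan is to show that for every non-zero $z \in S^kQ$ there exists $u \in U(\fK_F)$ with $uz$ a non-zero scalar multiple of $e_0^k$; that one-dimensional subspace is precisely the copy of $\bC$ sitting at the bottom of the filtration \Cref{eq:ifilt}, embedded in $S^k Q$ via iterated applications of \Cref{eq:sksk1}. I would fix a basis $\{e_{\alpha}\}_{\alpha\in A}$ of $Q$ as in the discussion preceding \Cref{le.prscr-ann}: $e_0 = 1\in\bC$, and the other basis elements are classes of rank-one tensors $v_{\alpha}^*\otimes v_{\alpha}$ with $v_{\alpha}^*\in V^*\setminus V_*$ and $v_{\alpha}\in V_*^*\setminus V$. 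Expanding $z$ as a polynomial in the $e_{\alpha}$'s, define its $F$-degree $j(z)$ to be the maximum number of indices $\alpha\ne 0$ appearing in any monomial of $z$ with non-zero coefficient. Since $g\cdot e_0 = 0$ and $g\cdot e_{\alpha}\in Q$ for every $g\in \fg$, the derivation action of $\fg$ on $S^\bullet Q$ cannot strictly increase $j$, so $\{z : j(z)\le d\}$ is a $\fg$-stable filtration of $S^k Q$ whose bottom layer is $\bC\cdot e_0^k$. My strategy would be to induct on $j(z)$, the case $j(z) = 0$ being trivial.

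For the inductive step, with $j(z)\ge 1$, I would pick any $\alpha_m\ne 0$ appearing in $z$ and, using (a mild strengthening of) \Cref{le.prscr-ann} applied to the lifts $v_{\alpha}, v_{\alpha}^*$ for all $\alpha\ne 0$ appearing in $z$, select $g\in \fK_F$ with $gv_{\alpha}^*\in V_*$ and $gv_{\alpha}\in V$, arranged so that the diagonal traces
\[
  t_{\alpha} := \langle gv_{\alpha}^*, v_{\alpha}\rangle + \langle v_{\alpha}^*, gv_{\alpha}\rangle
\]
satisfy $t_{\alpha_m}\ne 0$ and $t_{\alpha} = 0$ for every other $\alpha\ne 0$ appearing in $z$. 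The first two conditions ensure that the image of $g\cdot e_{\alpha}$ in $W_*\otimes W = F$ vanishes: the summands $gv_{\alpha}^*\otimes v_{\alpha}$ and $v_{\alpha}^*\otimes gv_{\alpha}$ already lie in $V_*\otimes V_*^*$ and $V^*\otimes V$ respectively, whose images in $F$ are zero. Hence $g\cdot e_{\alpha} = t_{\alpha}\cdot e_0$ in $Q$ for every $\alpha\ne 0$ appearing in $z$. The derivation property together with $g\cdot e_0 = 0$ then yields
\[
  gz \;=\; t_{\alpha_m}\cdot e_0\cdot \partial_{\alpha_m}(z),
\]
where $\partial_{\alpha_m}$ denotes formal partial differentiation on $S^\bullet Q$. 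The right-hand side is non-zero because $\alpha_m$ was chosen to actually appear in $z$, and because multiplication by $e_0$ is the injective embedding $S^{k-1}Q\hookrightarrow S^k Q$ of \Cref{eq:sksk1}; meanwhile $j(gz)\le j(z)-1$ by construction. Iterating the reduction at most $j(z)$ times produces the required $u\in U(\fK_F)$.

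The main technical obstacle I anticipate is the passage from $\fg$ to $\fK_F$ in \Cref{le.prscr-ann}. The only property of $\fg$ used in the proof of that lemma is density on the finite set $X\cup X^*$, meaning that the values $gx\in V$ and $gx^*\in V_*$ can be prescribed arbitrarily. To obtain the same conclusion with $g$ constrained to $\fK_F$, I would append to $X$ and $X^*$ finitely many lifts of the elements of $F\cap W$ and $F\cap W_*$ respectively and prescribe that $g$ send them to $V$ and $V_*$---which is precisely the defining condition of $\fK_F$. The density argument then accommodates the additional finite system of linear conditions controlling the traces $t_{\alpha}$, supplying the desired $g\in \fK_F$.
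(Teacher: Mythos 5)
Your proof is correct and follows essentially the same route as the paper's: fix a basis of $Q$ containing $e_0 = 1$, grade monomials in $S^kQ$ by the number of non-$e_0$ factors, and induct downward by applying an element of $\fK_F$ (supplied by a strengthening of \Cref{le.prscr-ann} to include lifts of $F$) that acts as $t_{\alpha_m}\, e_0\,\partial_{\alpha_m}$. Your version makes the trace computation and the ``$\fK_F$ versus $\fg$'' bookkeeping more explicit than the paper does, but the argument is the same.
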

\begin{proof}
  We have to show that the $\fK$-submodule generated by any non-zero element of $S^kQ$ intersects $\bC$. We fix a basis $\{e_{\alpha}\}_{\alpha\in A}$ of $S^kQ$ containing $e_0=1\in \bC$, as in the discussion preceding the statement of the present result. If we put a total order $\le $ on the index set $A$, the elements
  \begin{equation}\label{eq:10}
    e_{\bf t}=e_{\alpha_1}\cdots e_{\alpha_k}:=\sigma(e_{\alpha_1}\otimes \cdots\otimes e_{\alpha_k})
  \end{equation}
  for tuples
  \begin{equation*}
    {\bf t}=(\alpha_1,\cdots,\alpha_k),\ \alpha_1\le \cdots\le \alpha_k\in A
  \end{equation*}
  form a basis of $S^kQ$, where $\sigma$ denotes the symmetrization operator on $Q^{\otimes k}$.

  We assign $1=e_0$ degree zero and every other $e_{\alpha}$ degree $1$, thus allowing us to define a degree between $0$ and $k$ for each element \Cref{eq:10} and by extension for each $x\in S^kQ$, as the largest degree of a basis element \Cref{eq:10} appearing in a decomposition of $x$.

  We can now prove the claim that
  \begin{equation*}
    \bC\subseteq U(\fK)x
  \end{equation*}
  by induction on the degree of $x$. Since the base case $\deg(x)=0$ requires no proof, we focus on the induction step.

  Decompose
  \begin{equation}\label{eq:11}
    x=\sum c_{t}e_{t},\ c_{t}\ne 0,
  \end{equation}
  with $\deg(x)>0$. By \Cref{le.prscr-ann} we can arrange for an element $g\in \fK$ such that
  \begin{itemize}
  \item[$-$] $g$ annihilates all elements of $F\subset W\cup W_*$,
  \item[$-$] $g$ sends one of the elements $1\ne e_{\alpha}$ appearing among the tensorands in \Cref{eq:11} to a non-zero scalar multiple of $e_0$,
  \item[$-$] $g$ annihilates all other $e_{\alpha}$ appearing in \Cref{eq:11}.
  \end{itemize}

  Clearly then
  \begin{equation*}
    \deg(gx) = \deg(x)-1,
  \end{equation*}
  and we can conclude the argument by using the induction hypothesis.
\end{proof}

\pf{pr.soc}
\begin{pr.soc}
  We know that $L_{l,m,n,p}$ is semisimple by \cite[Corollary 4.3]{us0}, so it suffices to show that \Cref{eq:4} is essential. 

  Since $W$, $W_*$ and $I$ are trivial as $\fsl(\infty)$-modules and
  \begin{equation*}
    V_{m,n}\subset V^{*\otimes m}\otimes V^{\otimes n}
  \end{equation*}
  is the socle over $\fsl(\infty)$, it follows by restricting to the latter subalgebra of $\fg$ that the inclusion
  \begin{equation*}
    L_{l,m,n,p}\otimes I\subset J_{l,m,n,p}\otimes I
  \end{equation*}
  is essential, reducing the goal to proving that so is the inclusion
  \begin{equation}\label{eq:6}
    L_{l,m,n,p}\subset L_{l,m,n,p}\otimes I. 
  \end{equation}
  We can simplify this further: recall that
  \begin{equation*}
    L_{l,m,n,p}=W_*^{\otimes l}\otimes V_{m,n}\otimes W^{\otimes p}.
  \end{equation*}
  Now apply \Cref{le.tens-dense} in the following setup:
  \begin{itemize}
  \item[$-$] $\fG=\fg$ and $I=\fsl(\infty)$;
  \item[$-$] the inclusion $U\subseteq U'$ is
    \begin{equation}\label{eq:7}
      W_*^{\otimes l}\otimes W^{\otimes p}\subset W_*^{\otimes l}\otimes W^{\otimes p}\otimes I;
    \end{equation}
  \item[$-$] $D$ is any of the simple direct summands $V_{\mu,\nu}$ of $V_{m,n}$.    
  \end{itemize}
  \Cref{le.tens-dense} then shows that \Cref{eq:6} is indeed 
  essential provided \Cref{eq:7} is. In other words, it is enough to consider $m=n=0$ in \Cref{eq:6}. Since $I$ is the union of $S^kQ$ as $k\to \infty$, it will furthermore be sufficient to argue that, for every $l$, $p$ and $k$, the inclusion
  \begin{equation}\label{eq:9}
    W_*^{\otimes l}\otimes W^{\otimes p}\subset W_*^{\otimes l}\otimes W^{\otimes p}\otimes S^kQ
  \end{equation}
  is essential.

  We can now conclude via \Cref{pr.ann}: an arbitrary non-zero element of the right-hand side of \Cref{eq:9} is of the form
  \begin{equation*}
    \sum_{i=1}^r e_i\otimes v_i
  \end{equation*}
  where $e_i=\sum_{j} a_{i,j} \otimes b_{i,j}$ are linearly independent elements of $W_*^{\otimes l}\otimes W^{\otimes p}$ and $v_i\in S^kQ$. Now let $F\subset W\cup W_*$ be the finite set of vectors $\{a_{i,j}, b_{i,j}\}$ and consider the annihilator $\fK_F$ of $F$, as in \Cref{pr.ann}.

  The Lie algebra $\fK_F$ leaves the subspace
  \begin{equation*}
    \left(\bigoplus_{i=1}^r \bC e_i \right)\otimes S^kQ
  \end{equation*}
  invariant and its action makes that space isomorphic to $(S^kQ)^{\oplus r}$. The conclusion thus follows from \Cref{pr.ann}.
\end{pr.soc}

\subsection{Simple objects and their endomorphism algebras}\label{subse:endo}

The main result of the present subsection is the following (presumably expected) claim.

\begin{theorem}\label{th:scalaralgs}
The simple objects $L_{\lambda,\mu,\nu,\pi}$ are mutually non-isomorphic and have scalar endomorphism algebras.   
\end{theorem}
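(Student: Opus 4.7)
The plan is to handle both claims simultaneously by restricting $L_{\lambda,\mu,\nu,\pi}$ to $\fsl(\infty)\subset\fg$ and thereby reducing them to the analogous statements for the purely thick simples $W_{*\lambda}\otimes W_{\pi}$. As observed in the proof of \Cref{pr.soc}, $W$ and $W_*$ are trivial as $\fsl(\infty)$-modules, so under restriction we have an $\fsl(\infty)$-isomorphism
\[
  L_{\lambda,\mu,\nu,\pi} \;\cong\; \bigl(W_{*\lambda}\otimes W_{\pi}\bigr)\otimes V_{\mu,\nu}
\]
with $\fsl(\infty)$ acting only on the rightmost tensorand. Since $V_{\mu,\nu}$ is a simple countable-dimensional $\fsl(\infty)$-module, the standard form of Schur's lemma over $\bC$ yields $\End_{\fsl(\infty)}(V_{\mu,\nu})=\bC$; consequently every $\fsl(\infty)$-endomorphism of $L_{\lambda,\mu,\nu,\pi}$ takes the form $A\otimes \id_{V_{\mu,\nu}}$ for a unique $A\in \End_{\bC}(W_{*\lambda}\otimes W_{\pi})$, and a direct Leibniz-rule calculation shows that $A\otimes \id$ is $\fg$-equivariant precisely when $A\in \End_{\fg}(W_{*\lambda}\otimes W_{\pi})$. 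Any $\fg$-isomorphism $L_{\lambda,\mu,\nu,\pi}\cong L_{\lambda',\mu',\nu',\pi'}$ likewise forces $(\mu,\nu)=(\mu',\nu')$ (the $\fsl(\infty)$-simple constituents must coincide) and descends to a $\fg$-isomorphism $W_{*\lambda}\otimes W_{\pi}\cong W_{*\lambda'}\otimes W_{\pi'}$ of the multiplicity spaces.

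Both assertions of the theorem are thus reduced to their analogues for the purely thick simples $W_{*\lambda}\otimes W_{\pi}$, which are themselves simple $\fg$-modules by \cite[Corollary 4.3]{us0}. For these I would exhibit a cyclic vector $v\in W_{*\lambda}\otimes W_{\pi}$ whose stabilizer $\fp\subset\fg$ is large enough that the line $\bC v$ is $\fp$-invariant and the induced character on $\fp/[\fp,\fp]$ records $(\lambda,\pi)$ uniquely. Such a $v$ can be built from the Schur-functor presentation applied to suitable ``extremal'' classes in $W_*$ and $W$, which serve as generalized eigenvectors modulo $V_*$ (respectively $V$) for an appropriate Borel-like subalgebra of $\fg$. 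Any $\fg$-endomorphism of $W_{*\lambda}\otimes W_{\pi}$ must then preserve $\bC v$ and so act on it by a scalar, and by simplicity is itself that scalar; moreover, since distinct pairs $(\lambda,\pi)$ yield distinct extremal characters, the corresponding simples are pairwise non-isomorphic.

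The principal obstacle I foresee is precisely the construction of this extremal cyclic vector $v$ and the verification that its stabilizer is large and sharp enough. The naive diagonal Cartan of $\fg$ is of no help on $W$ and $W_*$, since every coordinate eigenvector of $V^*$ already lies in $V_*$ and hence vanishes in $W_*=V^*/V_*$; the ``highest-weight'' data thus has to be extracted via a limiting or duality argument compatible with the Schur symmetrizer, and the Borel-like subalgebra stabilizing $v$ must be chosen with care so that its character suffices to pin down $(\lambda,\pi)$.
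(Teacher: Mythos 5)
Your reduction to the purely thick case is sound and matches the paper's: the paper also restricts to the ideal $\fsl(\infty)\subset\fg$ and uses precisely the observation that $W$, $W_*$ are trivial and $V_{\mu,\nu}$ is simple with scalar endomorphism algebra over $\fsl(\infty)$, packaged in \Cref{le:dense-us0} as the isomorphism $\Hom_{\fg}(U,U')\cong\Hom_{\fg}(U\otimes V_{\mu,\nu},U'\otimes V_{\mu,\nu})$. (Be a little more careful than ``every $\fsl(\infty)$-endomorphism is $A\otimes\id$'': since $W_{*\lambda}\otimes W_\pi$ is infinite-dimensional, that factorization is not automatic from Schur's lemma alone; it is exactly what \Cref{le:dense-us0} supplies.)

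The genuine gap is the thick case itself, which you flag as the ``principal obstacle'' and then only gesture at. Your proposed route---an extremal cyclic vector whose stabilizer is a Borel-like subalgebra with a character that records $(\lambda,\pi)$---is not the argument the paper uses, and you correctly note the obstruction: $W_*=V^*/V_*$ has no coordinate eigenvectors to work with, so there is no obvious highest-weight theory here. The paper sidesteps this entirely. It instead establishes (\Cref{pr:gpendoalgs}, which leans on the analogue of Proposition 3.2 of \cite{us0}) the isomorphism $\End_{\fg}\bigl(W_{*l}\otimes V_{\mu,\nu}\otimes W_p\bigr)\cong\bC[S_l\times S_p]$, with the symmetric groups permuting the $W_*$- and $W$-tensorands. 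Once the endomorphism algebra of the semisimple object is identified with a group algebra, both claims follow from idempotent calculus: the Young symmetrizers $c_\lambda\otimes c_\pi$ form a complete set of minimal idempotents up to inner conjugation, so the images $(c_\lambda\otimes c_\pi)\bigl(W_{*l}\otimes V_{\mu,\nu}\otimes W_p\bigr)\cong L_{\lambda,\mu,\nu,\pi}$ are pairwise non-isomorphic for non-conjugate idempotents, and $\End_{\fg} L_{\lambda,\mu,\nu,\pi}\cong(c_\lambda\otimes c_\pi)\bC[S_l\times S_p](c_\lambda\otimes c_\pi)=\bC$. Without that computation of the full endomorphism algebra (or something equivalent), your sketch does not close; the extremal-vector plan would require new machinery that the proposal does not supply and that the paper does not need.
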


The arguments, which require some groundwork, will be in the spirit of those used in the proof of the analogous statement \cite[Theorem 3.5]{us0}. First, recall \cite[Lemma 3.1]{us0}:

\begin{lemma}\label{le:dense-us0}
  Let $\fG$ be a Lie algebra and $J\subseteq \fG$ be an ideal. Suppose $U$, $U'$ are two $\fG/J$-modules and $D$ a $\fG$-module on which $J$ acts densely and irreducibly with $\End_J D=\bC$. Then, the inclusion
  \begin{equation*}
    \Hom_{\fG}(U,U')\ni f\mapsto f\otimes \id\in \Hom_{\fG}(U\otimes D,U'\otimes D)
  \end{equation*}
  is an isomorphism.
  \qedhere
\end{lemma}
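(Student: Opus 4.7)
The map $f\mapsto f\otimes\id$ makes sense because $J$ acts trivially on $U$ and $U'$, so for any $\fG$-linear $f:U\to U'$ the tensor $f\otimes\id_D$ commutes with the $\fG$-action on $U\otimes D$. Injectivity is immediate: if $f\otimes\id_D=0$, pick any $0\ne d\in D$ and observe that $(f\otimes\id)(u\otimes d)=f(u)\otimes d=0$ forces $f(u)=0$ for every $u$. The real content is surjectivity.

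The plan is to exploit the fact that $J\subseteq\fG$ acts trivially on $U$ and $U'$. Given an arbitrary $\fG$-linear map $\varphi:U\otimes D\to U'\otimes D$, for each fixed $u\in U$ the map $\varphi_u:=\varphi(u\otimes-):D\to U'\otimes D$ commutes with $J$, since $j(u\otimes d)=u\otimes jd$ and $j(u'\otimes d')=u'\otimes jd'$. Thus the whole problem reduces to identifying the space $\Hom_J(D,\,U'\otimes D)$ with $U'$ in a way that is functorial in $U'$. Once this identification is in place, the assignment $u\mapsto\varphi_u$ defines a linear map $f:U\to U'$ characterised by $\varphi(u\otimes d)=f(u)\otimes d$, and a short computation using the $\fG$-linearity of $\varphi$ — expanding $g(u\otimes d)=gu\otimes d+u\otimes gd$ and $g(f(u)\otimes d)=gf(u)\otimes d+f(u)\otimes gd$ — shows that $f(gu)=gf(u)$ for all $g\in\fG$, which gives $\fG/J$-linearity.

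The heart of the argument is therefore the identification $\Hom_J(D,U'\otimes D)\cong U'$. Fix a basis $\{u'_\alpha\}_{\alpha\in A}$ of $U'$. Any $\psi:D\to U'\otimes D$ can be expanded uniquely as $\psi(d)=\sum_{\alpha}u'_\alpha\otimes T_\alpha(d)$, where for each $d$ only finitely many $T_\alpha(d)$ are non-zero. The components $T_\alpha:D\to D$ are themselves $J$-linear, so the hypothesis $\End_J D=\bC$ forces each $T_\alpha$ to be a scalar $\lambda_\alpha\in\bC$. The density and irreducibility of the $J$-action enter here to guarantee that if $T_\alpha\neq 0$ then $T_\alpha$ is invertible, so that the set $\{\alpha:T_\alpha\ne 0\}$ is exactly the (finite) set of indices appearing in the expansion of $\psi(d)$ for any non-zero $d$; consequently $u':=\sum_\alpha\lambda_\alpha u'_\alpha$ is a well-defined finite sum in $U'$, and $\psi(d)=u'\otimes d$ for all $d$.

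The main obstacle is this last finiteness point, which is delicate precisely because $U'$ is allowed to be infinite-dimensional: one must verify that the indexing set of non-zero scalars $\lambda_\alpha$ is independent of the chosen test vector $d$ and is genuinely finite. After that, collecting the two halves of the argument recovers $\varphi=f\otimes\id_D$ with $f\in\Hom_{\fG/J}(U,U')$, completing the proof of surjectivity and hence of the lemma.
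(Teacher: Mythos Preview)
The paper does not actually prove this lemma: it is quoted from \cite[Lemma~3.1]{us0} and closed with a \verb|\qedhere| inside the statement itself, so there is no proof in the present paper to compare against.

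Your argument is correct and is the natural one. The only step worth tightening is the finiteness remark near the end. You do not need to appeal separately to density and irreducibility there: once you have established that each coordinate map $T_\alpha$ lies in $\End_J D=\bC$, a nonzero $T_\alpha$ is automatically an invertible scalar, so for any single $0\ne d\in D$ the finite set $\{\alpha:T_\alpha(d)\ne 0\}$ already coincides with $\{\alpha:\lambda_\alpha\ne 0\}$, and you are done. The density and irreducibility hypotheses are (in the original source) what justify $\End_J D=\bC$ in the first place; since that conclusion is given to you explicitly here, you may simply use it.
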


\begin{proposition}\label{pr:gpendoalgs}
  For any two non-negative integers $l$, $p$ and Young diagrams $\mu$, $\nu$ the endomorphism algebra in $\bT$ of the object $W_{*l}\otimes V_{\mu,\nu}\otimes W_{p}$ is the group algebra $\bC [S_l\times S_p]$, with the two symmetric-group factors acting on the two outer tensorands. 
\end{proposition}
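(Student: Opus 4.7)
My strategy is to reduce, via \Cref{le:dense-us0}, to the purely thick case $\mu=\nu=\emptyset$, then apply a Schur--Weyl decomposition, and finally establish the rigidity of the resulting purely thick simples.

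\textbf{Step 1 (stripping off $V_{\mu,\nu}$).} I apply \Cref{le:dense-us0} with $\fG=\fg$, $J=\fsl(\infty)$, $U=U'=W_*^{\otimes l}\otimes W^{\otimes p}$ and $D=V_{\mu,\nu}$, after reordering tensor factors via the symmetry of $\bT$. The hypotheses hold readily: any $A\in\fsl(\infty)$ is a finite matrix, hence sends $V_*^*$ into $V$ and $V^*$ into $V_*$, so $W=V_*^*/V$ and $W_*=V^*/V_*$ are trivial $\fsl(\infty)$-modules and $U$ is a $\fg/\fsl(\infty)$-module. Meanwhile $V_{\mu,\nu}$ is a simple $\fsl(\infty)$-module with $\End_{\fsl(\infty)}V_{\mu,\nu}=\bC$ on which $\fsl(\infty)$ acts densely, by the standard Schur--Weyl theory for $\fsl(\infty)$. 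The lemma then gives
\begin{equation*}
  \End_{\fg}\bigl(W_*^{\otimes l}\otimes V_{\mu,\nu}\otimes W^{\otimes p}\bigr)\;\cong\;\End_{\fg}\bigl(W_*^{\otimes l}\otimes W^{\otimes p}\bigr),
\end{equation*}
reducing the proposition to the purely thick case.

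\textbf{Step 2 (Schur--Weyl decomposition).} I next use the isotypic decomposition under the permutation action of $S_l\times S_p$ on the outer tensor factors,
\begin{equation*}
  W_*^{\otimes l}\otimes W^{\otimes p}\;\cong\;\bigoplus_{|\lambda|=l,\;|\pi|=p}\bigl(\bS_{\lambda}W_*\otimes \bS_{\pi}W\bigr)\otimes\bigl(\wt{V}_{\lambda}\otimes\wt{V}_{\pi}\bigr),
\end{equation*}
where $\wt{V}_{\lambda},\wt{V}_{\pi}$ are the associated Specht modules. Each $\bS_{\lambda}W_*\otimes\bS_{\pi}W=L_{\lambda,\emptyset,\emptyset,\pi}$ is simple as a $\fg$-module by \cite[Corollary 4.3]{us0}. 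The natural inclusion $\bC[S_l\times S_p]\hookrightarrow\End_{\fg}\bigl(W_*^{\otimes l}\otimes W^{\otimes p}\bigr)$ is manifest, and provided that the simples $L_{\lambda,\emptyset,\emptyset,\pi}$ are pairwise non-isomorphic with scalar endomorphism algebras, the isotypic decomposition then forces
\begin{equation*}
  \End_{\fg}\bigl(W_*^{\otimes l}\otimes W^{\otimes p}\bigr)\;\cong\;\bigoplus_{\lambda,\pi}\End_{\bC}\bigl(\wt{V}_{\lambda}\otimes\wt{V}_{\pi}\bigr)\;\cong\;\bC[S_l]\otimes\bC[S_p]\;=\;\bC[S_l\times S_p]
\end{equation*}
by Artin--Wedderburn for the finite symmetric groups.

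\textbf{Step 3 (rigidity of purely thick simples).} The hard step is the rigidity of the purely thick simples $L_{\lambda,\emptyset,\emptyset,\pi}$: because they are infinite-dimensional, neither Schur's lemma nor Dixmier's lemma applies directly (note that $\fg$ itself is uncountable-dimensional over $\bC$, so Dixmier is unavailable at the level of the full enveloping algebra). My plan is to adapt the density/annihilator technique from the proofs of \Cref{pr.ann} and \Cref{le.tens-dense}: given any non-zero $\fg$-equivariant morphism $\phi\colon\bS_{\lambda}W_*\otimes\bS_{\pi}W\to\bS_{\lambda'}W_*\otimes\bS_{\pi'}W$ and a non-zero element $x$ in its source, exploit the annihilator subalgebras $\fK_F\subset\fg$ of suitably chosen finite sets $F\subset W\cup W_*$ (as in \Cref{pr.ann}) to constrain $\phi(x)$ tightly, ultimately forcing $(\lambda',\pi')=(\lambda,\pi)$ and $\phi\in\bC\cdot\id$. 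This is the main obstacle of the argument, and the Schur functor combinatorics (and the density of $\fK_F$-orbits) must be used hand in hand.
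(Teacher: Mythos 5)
Your Step 1 coincides exactly with the paper's argument: both apply \Cref{le:dense-us0} with $\fG=\fg$, $J=\fsl(\infty)$, $U=U'=W_*^{\otimes l}\otimes W^{\otimes p}$, $D=V_{\mu,\nu}$, reducing to the purely thick endomorphism algebra $\End_{\fg}(W_*^{\otimes l}\otimes W^{\otimes p})$. After that, however, you and the paper part ways, and the gap is in how you handle the purely thick case.

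The paper settles $\End_{\fg}(W_*^{\otimes l}\otimes W^{\otimes p})\cong\bC[S_l\times S_p]$ by citing an earlier direct computation (\cite[Proposition~3.2]{us0}), which performs the purely thick Schur--Weyl analysis in one go. You instead rebuild it via a Schur--Weyl decomposition (Step~2) conditional on the purely thick simples $L_{\lambda,\emptyset,\emptyset,\pi}$ being mutually non-isomorphic with scalar endomorphism algebras, and then sketch a Step~3 to supply that rigidity. The logical structure of Step~2 is sound, and you correctly flag that you cannot just invoke these rigidity facts: in this paper they are established only afterwards in \Cref{th:scalaralgs}, whose proof \emph{uses} \Cref{pr:gpendoalgs}, so appealing to them would be circular. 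Step~3 is precisely what is needed to break that circularity --- but as written it is a plan, not a proof. You do not carry out the annihilator argument: showing non-isomorphism of the $L_{\lambda,\emptyset,\emptyset,\pi}$ for distinct $(\lambda,\pi)$ and scalar endomorphism algebras requires more than the essentiality machinery of \Cref{pr.ann} (which is tailored to detecting the socle, not to bounding $\Hom$ spaces from above). In effect, Step~3 asks you to redo the content of \cite[Proposition~3.2]{us0} from scratch, and until that is done the proof is incomplete. The shortest fix is to replace Steps~2--3 with a citation of that result, as the paper does; alternatively, if you want a self-contained argument, you need to actually supply the upper bound $\dim\Hom_{\fg}(W_*^{\otimes l}\otimes W^{\otimes p}, W_*^{\otimes l}\otimes W^{\otimes p})\le l!\,p!$, which is the genuine computational content and does not follow from the sketch you give.
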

\begin{proof}
  We apply \Cref{le:dense-us0} to the ideal
  \begin{equation*}
    \fsl(\infty)=:J\subset \fG:=\fg,
  \end{equation*}
  with
  \begin{equation*}
    U=U'=W_{*l}\otimes W_p\quad\text{and}\quad D=V_{\mu,\nu}. 
  \end{equation*}
  The density of the action of $\fsl(\infty)$ on $V_{\mu,\nu}$ and the isomorphism $\End_{\fsl(\infty)} V_{\mu,\nu}\cong \bC$ follow by realizing the object $V_{\mu,\nu}$ as a colimit of irreducible $\fsl_n$-modules, while establishing an isomorphism
  \begin{equation*}
    \End_{\fg} ( W_{*l}\otimes W_p)\cong \bC[S_l\times S_p]
  \end{equation*}
  is entirely parallel to \cite[Proposition 3.2]{us0}, whose proof we do not reprise here. \Cref{le:dense-us0} then implies the desired isomorphism
  \begin{equation*}
    \End_{\fg} (W_{*l}\otimes V_{\mu,\nu}\otimes W_p)\cong  \End_{\fg} (W_{*l}\otimes W_p)\cong \bC[S_l\times S_p].
  \end{equation*}
\end{proof}

\pf{th:scalaralgs}
\begin{th:scalaralgs}
  According to \Cref{pr:gpendoalgs}, the endomorphism algebra
  \begin{equation}\label{eq:endss}
    \End_{\fg} (W_{*l}\otimes V_{\mu,\nu}\otimes W_{p})\cong \bC[S_l\times S_p]\cong \bC [S_l ]\otimes \bC [S_p]
  \end{equation}
  is semisimple. Since the tensor products
    $c_{\lambda}\otimes c_{\pi}\in \bC [S_l]\otimes \bC [S_p ]$
  of Young projectors ranging over diagrams with $|\lambda|=l,~|\pi|=p$,  
  form a complete system of equivalence classes of minimal idempotents in \Cref{eq:endss} under inner conjugation, the semisimple object $W_{*l}\otimes V_{\mu,\nu}\otimes W_{p}$ has simple constituents isomorphic to
  \begin{equation*}
    (c_\lambda\otimes c_{\pi})\left(W_{*l}\otimes V_{\mu,\nu}\otimes W_{p}\right) \cong L_{\lambda,\mu,\nu,\pi},
  \end{equation*}
  with  $L_{\lambda,\mu,\nu,\pi}$ not isomorphic to $ L_{\lambda',\mu,\nu,\pi'}$ for distinct pairs $(\lambda,\pi)\ne (\lambda',\pi')$ because
  \begin{equation*}
    c_{\lambda}\otimes c_{\pi}\text{ is not inner-conjugate to }c_{\lambda'}\otimes c_{\pi'}.
  \end{equation*}
  
  Furthermore, using Proposition \ref{pr:gpendoalgs}, we calculate 
  \begin{align*}
  \End_{\fg}L_{\lambda,\mu,\nu,\pi}&\cong \End_{\fg}\left((c_\lambda\otimes c_{\pi})\left(W_{*l}\otimes V_{\mu,\nu}\otimes W_{p}\right) \right)\\
  &\cong (c_\lambda\otimes c_{\pi})\End_{\fg}\left(W_{*l}\otimes V_{\mu,\nu}\otimes W_{p}\right)(c_\lambda\otimes c_{\pi})\\
  &\cong (c_\lambda\otimes c_{\pi})\bC[S_l\times S_p](c_\lambda\otimes c_{\pi}).
  \end{align*}
  Since $(c_{\lambda}\otimes c_{\pi})\bC[S_l\times S_p]=X$ is a simple $\bC[S_l\times S_p] $-module, we have $(c_{\lambda}\otimes c_{\pi})\bC[S_l\times S_p](c_{\lambda}\otimes c_{\pi})=\End_{\bC[S_l\times S_p]} X=\bC$, and the statement is proved.

\end{th:scalaralgs}

\subsection{The category $\bT$}\label{subse:ord-cat}

\begin{definition}
  The category $\bT$ is the smallest full 
  tensor Grothendieck subcategory of the category ${}_{\fg}\md$ of $\fg$-modules, closed under taking subquotients, and containing
  \begin{itemize}
  \item[$-$] the objects $J_s$ of \Cref{eq:j} for quadruples $s=(l,m,n,p)$;
    \item[$-$] the object $I$ of \Cref{eq:12}. 
  \end{itemize}
\end{definition}

The indices $s=(l,m,n,p)$ form a poset $(\cP,\preceq)$ under the ordering introduced in \Cref{subse:ord}. Keeping that in mind, we have

\begin{proposition}\label{pr.is-ordered}
$\bT$ is an ordered Grothendieck category in the sense of \Cref{def.ord-gr}.   
\end{proposition}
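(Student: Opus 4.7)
The plan is to verify the six conditions of \Cref{def.ord-gr} for $\bT$ with distinguished objects $X_s := I_s$ indexed by $s \in \bN^4$ and the order from \Cref{eq:ord}. \Cref{pr.soc} identifies $\mathrm{soc}(I_s) = L_{l,m,n,p}$, which by Corollary 4.3 of \cite{us0} decomposes as a direct sum of simples $L_{\lambda,\mu,\nu,\pi}$ over Young diagrams with $(|\lambda|,|\mu|,|\nu|,|\pi|) = s$. Thus $\cS_s$ is this finite collection of isomorphism classes; distinct $\cS_s$ are disjoint by \Cref{th:scalaralgs}, and together they exhaust the simples of $\bT$ because every simple appears as a subquotient of some tensor product of the generators $J_s$, $I$, which decomposes via \Cref{pr.plth} into objects of the required form. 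This handles condition \labelcref{item:6}.

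For \labelcref{item:3}, take the ascending filtration $I_s = \bigcup_k J_s \otimes S^k Q$ induced by \Cref{eq:ifilt}. Each layer quotient $J_s \otimes S^k F$ has finite length: the modules $V^*, V_*^*$ are length-two $\fg$-modules (their socles $V_*, V$ and cosocles $W_*, W$ are simple), while $S^k F \cong \bigoplus_{|\kappa|=k} W_{*\kappa} \otimes W_\kappa$ is a finite direct sum by \Cref{pr.plth}\labelcref{item:1}. An induction on $k$ then shows each $J_s \otimes S^k Q$ has finite length, so $I_s$ is a directed union of finite-length subobjects, hence semi-artinian. For \labelcref{item:8}, apply the Young-projector decomposition from the proof of \Cref{th:scalaralgs}, writing $J_s = \bigoplus_{|\lambda|=l,|\pi|=p}(c_\lambda \otimes c_\pi)J_s$ and further refining via the isotypic decomposition of $V_{m,n} \subset V^{*\otimes m} \otimes V_*^{*\otimes n}$; tensoring with $I$ preserves this direct-sum decomposition by exactness, and \Cref{pr.soc} identifies the socle of each summand as a single simple of the form $L_{\lambda,\mu,\nu,\pi}$.

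For \labelcref{item:5}, observe that $J_s = J_s \otimes \bC \hookrightarrow J_s \otimes I = I_s$ and $I = I_{(0,0,0,0)}$, so the defining generators of $\bT$ lie inside $I_s$'s. Closure of $\bT$ under subquotients and direct sums then reduces the problem to showing that tensor products of $I_s$'s are subquotients of direct sums of $I_u$'s, which follows from the commutative-algebra structure on $I$ (yielding a surjection $I \otimes I \twoheadrightarrow I$ via multiplication) combined with the layered structure induced by \Cref{eq:ifilt}. Together with the semi-artinian property, this realizes every object of $\bT$ as a subquotient of a direct sum of $I_u$'s via its composition series.

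Conditions \labelcref{item:7} and \labelcref{item:9} reduce to order-theoretic bookkeeping. For \labelcref{item:7}, any composition factor of $I_s$ outside its socle lies in some layer $J_s \otimes S^k F$ of the filtration above; applying \Cref{pr.plth}\labelcref{item:1} to $S^k F$ and the length-two filtrations of $V^*, V_*^*$ isolates the Young-diagram degrees $t = (|\lambda'|,|\mu'|,|\nu'|,|\pi'|)$ of each factor, and one verifies directly from \Cref{eq:ord} that $t \prec s$. For \labelcref{item:9}, the family $\Sigma_s$ defined in \Cref{subse:ord} provides morphisms $I_s \to I_{s'}$ for $s' \prec s$; restricting to the subfamily whose targets $I_{s'}$ satisfy $s' \preceq t$ yields a joint kernel equal to $X_{s \succ t}$, identified by combining \Cref{le.correct-ker} (for the $\id_{J_s} \otimes \psi_0$ components) with the joint-kernel analysis of the $\theta \otimes \id_I$ components inherited from \cite[\S 3.2]{us0}. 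The main obstacle is precisely this bookkeeping, where one must reconcile the filtration layers of $I$ and the length-two structure of $V^*, V_*^*$ with the specific inequalities defining $\preceq$.
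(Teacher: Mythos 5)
Your verification follows the paper's overall strategy of checking the six conditions of \Cref{def.ord-gr} with $X_s := I_s$, and conditions (\labelcref{item:3}), (\labelcref{item:5}), (\labelcref{item:7}), and (\labelcref{item:8}) are handled essentially as the paper does. For (\labelcref{item:6}) you supply a plethysm argument where the paper simply cites \cite[Proposition~5.4]{us0}; this is a reasonable alternative route, though your claim that "every simple appears as a subquotient of some tensor product of the generators" would need to be spelled out (one must also account for colimits in the Grothendieck closure).

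The genuine gap is in your treatment of condition (\labelcref{item:9}). That condition requires, for each fixed $t \prec s$, a family of morphisms with codomain exactly $X_t$ whose joint kernel is $X_{s \succ t}$. You instead propose to take the subfamily of $\Sigma_s$ whose targets $I_{s'}$ satisfy $s' \preceq t$. But the morphisms in $\Sigma_s$ (the maps $\theta \otimes \id_I$ for $\theta \in \Theta_s$ from \cite[\S~3.2]{us0}, and $\id_{J_s}\otimes\psi_0$) are single-step: each drops $s$ to an immediate predecessor $s'$. When $t$ is more than one step below $s$ there may be no element of $\Sigma_s$ with target $\preceq t$, so your restricted subfamily is empty and its joint kernel is all of $X_s$, not $X_{s\succ t}$; and even when the subfamily is nonempty, its members do not actually map into $X_t$ as the definition demands. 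The paper's proof resolves this by working with \emph{compositions} of the elementary maps (the two projections $V^*\to W_*$, $V^*_*\to W$, the surjection $V^*\otimes V^*_*\to Q\subset I$, and $\psi_0$) so as to land precisely in $I_t$, and then verifying the joint-kernel claim. You need to insert that composition step before the joint-kernel bookkeeping can be carried out.
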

\begin{proof}
  We have to check the conditions listed in \Cref{def.ord-gr}. Here, the objects $X_s$ will be the objects $I_s$ from \Cref{eq:13} for $s=(l,m,n,p)$.

  {Condition \emph{(\labelcref{item:3})}.} This follows from the fact that all $I_s$ have countable filtrations whose subquotients are simple objects of the form $L_{\lambda,\mu,\nu,\pi}$ as in \Cref{eq:s}. The latter is clear as the objects $J_s$ have finite length and $I$ has the filtration \Cref{eq:12}.


  {Condition \emph{(\labelcref{item:5})}.} This holds essentially by construction. 

  {Condition \emph{(\labelcref{item:6})}} is a consequence of \cite[Proposition 5.4]{us0}.  

  {Condition \emph{(\labelcref{item:7})}.} Once more, we filter $I_s=J_s\otimes I$ by first refining the socle filtration maximally of $J_s$ and then tensor by some maximal refinement of the filtration \Cref{eq:12}. 

  The successive subquotients
  \begin{equation*}
    S^{k+1}Q/S^kQ \cong S^{k+1}F= S^{k+1}(W\otimes W_*)
  \end{equation*}
  of \Cref{eq:12} can be decomposed as sums of objects of the form $W_{\lambda}\otimes W_{*\lambda}$ by part \emph{(\labelcref{item:1})} of \Cref{pr.plth}. Hence, tensoring a simple subquotient $S\in \cS_s$
  of $J_s$ for some $s=(l,m,n,p)$ by such an object has the effect of increasing $l$ and $m$ by the same amount, thus resulting in some $t\prec s$ according to our ordering \Cref{eq:ord}.

  It thus remains to argue for simple subquotients of
  \begin{equation*}
    J_s=J_{l,m,n,p}=W_*^{\otimes l}\otimes V^{*\otimes m}\otimes V_*^{*\otimes n}\otimes W^{\otimes p}
  \end{equation*}
  instead. In this case though the filtration of $J_s$ is obtained either by surjecting one of the tensorands $V^*$ onto $W_*=V^*/V_*$ or similarly, one of the tensorands $V_*^*$ onto $W$, or by evaluating some $V_*$ against some $V$.

  All of these procedures map into $J_t$ for $t\prec s$, hence the conclusion.

  {Condition \emph{(\labelcref{item:8})}.} Indeed, for $s=(l,m,n,p)$ the object $I_s$ decomposes as
  \begin{equation*}
    I_s = \bigoplus_{\lambda,\mu,\nu,\pi}I_{\lambda,\mu,\nu,\pi}
  \end{equation*}
  where the sum ranges over $|\lambda|=l$, $|\mu|=m$, etc. The summands have simple respective socles $L_{\lambda,\mu,\nu,\pi}$ by \Cref{pr.soc}. 

  {Condition \emph{(\labelcref{item:9})}.} The morphisms $I_s\to I_t$ will be compositions of the obvious ones:
  \begin{itemize}
  \item[$-$] projecting one of the tensorands $V^*$ of $I_s=J_s\otimes I$ onto $W_*$;
  \item[$-$] the dual analogue, $V^*_*\to W$;
  \item[$-$] the surjection defining $Q$,
    \begin{equation}\label{eq:partpair}
      V^*\otimes V^*_*\to Q\subset I;
    \end{equation}
  \item[$-$] applying the morphism $I\to F\otimes I$ in \Cref{eq:16} to the tensorand $I$ of $I_s$. 
  \end{itemize}
  The verification that the joint kernel of these maps is as claimed routine.  
\end{proof}

In particular, \cite[Proposition 2.5]{us1} and \Cref{pr.soc} together prove

\begin{theorem}\label{thm:hulls}
  For every quadruple $(\lambda,\mu,\nu,\pi)$ of Young diagrams, $I_{\lambda,\mu,\nu,\pi}$ is an injective hull of $L_{\lambda,\mu,\nu,\pi}$.
\end{theorem}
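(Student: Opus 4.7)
The plan is to leverage the fact (established in \Cref{pr.is-ordered}) that $\bT$ is an ordered Grothendieck category, together with the general result cited just below \Cref{def.ord-gr} which identifies indecomposable injectives in such categories as indecomposable summands of the distinguished objects $X_s$ (here $I_s = J_s \otimes I$). Combined with \Cref{pr.soc}, which locates the socle of $I_{l,m,n,p}$, this narrows the task down to matching indecomposable summands of $I_{l,m,n,p}$ with the simple socle constituents $L_{\lambda,\mu,\nu,\pi}$.

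Concretely, I would proceed as follows. First, invoke the decomposition
\begin{equation*}
  I_{l,m,n,p} \;=\; \bigoplus_{|\lambda|=l,\,|\mu|=m,\,|\nu|=n,\,|\pi|=p} I_{\lambda,\mu,\nu,\pi}
\end{equation*}
that was already recorded in the verification of condition \emph{(\labelcref{item:8})} in the proof of \Cref{pr.is-ordered}. Next, apply \Cref{pr.soc} to read off the socle of $I_{l,m,n,p}$ as $L_{l,m,n,p} = \bigoplus L_{\lambda,\mu,\nu,\pi}$, and match the summands: by construction the socle of $I_{\lambda,\mu,\nu,\pi}$ equals $L_{\lambda,\mu,\nu,\pi}$, which by \Cref{th:scalaralgs} is simple.

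Now cite the fact that $\bT$ is ordered Grothendieck (\Cref{pr.is-ordered}) to conclude that $I_{l,m,n,p}$ is injective; hence each direct summand $I_{\lambda,\mu,\nu,\pi}$ is injective as well. Since an injective object in a Grothendieck category with simple socle is automatically indecomposable and is an injective hull of that socle, we conclude that $I_{\lambda,\mu,\nu,\pi}$ is the injective hull of $L_{\lambda,\mu,\nu,\pi}$, as desired.

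There is essentially no hard step: all the substantive content has already been done. The only minor wrinkle worth verifying is that \Cref{pr.soc} genuinely gives simple socles on the individual summands $I_{\lambda,\mu,\nu,\pi}$ rather than only on the full $I_{l,m,n,p}$; this follows at once because the decomposition is compatible with taking socles (the socle is an additive functor on direct sums), and the summands $L_{\lambda,\mu,\nu,\pi}$ of the socle are pairwise non-isomorphic simples by \Cref{th:scalaralgs}.
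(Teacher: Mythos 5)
Your proposal is correct and tracks the paper's own argument: the paper deduces the theorem directly from Proposition \ref{pr.soc} together with the general structure theory of ordered Grothendieck categories from \cite{us1} (which identifies indecomposable injectives with indecomposable summands of the $X_s = I_s$), exactly as you do. The only cosmetic difference is that you spell out the reduction---matching socle summands with the $I_{\lambda,\mu,\nu,\pi}$ via \Cref{th:scalaralgs} and invoking ``injective with simple socle $\Rightarrow$ injective hull of that socle''---whereas the paper states the conclusion as an immediate consequence.
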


We record the following observation.

\begin{lemma}\label{le.defct}
  Let $i=(l,m,n,p)$ and $i'=(l',m',n',p')$ be two elements of the poset described in \Cref{eq:ord}. Then $i\preceq i'$ implies
  \begin{equation*}
    d(i,i') = l-l' + n'-n.
  \end{equation*}
\end{lemma}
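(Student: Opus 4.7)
The plan is to prove matching upper and lower bounds for the defect. As a first step I would reparametrize the ordering data: setting $a := l - l'$, $b := m' - m$, $c := n' - n$, $d := p - p'$, the six inequalities in \cref{eq:ord} say that $a,b,c,d \ge 0$ together with $a \le b$ and $d \le c$, while the conservation equation $l+m-n-p = l'+m'-n'-p'$ translates into $a + c = b + d$. Equivalently, if one sets $k := b - a = c - d$, then $b = a+k$ and $c = d+k$ with $k \ge 0$, and the target formula reads $d(i,i') = l - l' + n' - n = a + c = a + d + k$.

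For the lower bound I would exhibit an explicit chain of this length by iterating three elementary upward covers: (A) $(l,m,n,p) \prec (l-1,m+1,n,p)$, (B) $(l,m,n,p) \prec (l,m,n+1,p-1)$, and (C) $(l,m,n,p) \prec (l,m+1,n+1,p)$. Each is readily checked to be a strict $\preceq$-relation against \cref{eq:ord}. Starting from $i$, applying $a$ moves of type (A), then $d$ moves of type (B), then $k$ moves of type (C), the resulting chain terminates at $(l-a,\,m+a+k,\,n+d+k,\,p-d) = (l',m',n',p') = i'$ after $a + d + k = a + c$ steps, giving $d(i,i') \ge a + c$.

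For the matching upper bound I would introduce the monovariant $\phi(s) := n_s - l_s$ and show that $\phi$ strictly increases along every strict cover of the poset. Indeed, if $s \prec t$ then $n_t - n_s \ge 0$ and $l_s - l_t \ge 0$, and if both vanished then the conservation law would force $m_t - m_s = p_t - p_s$; but $m_t - m_s \ge 0$ and $p_t - p_s \le 0$, so both differences would have to be zero as well, contradicting $s \ne t$. Hence $\phi(t) - \phi(s) \ge 1$ along any strict step, and any chain $i = i_0 \prec \cdots \prec i_q = i'$ satisfies $q \le \phi(i') - \phi(i) = (n'-n) + (l-l') = a+c$. I do not foresee any serious obstacle here; the whole argument is bookkeeping, and the only non-obvious input is the choice of monovariant $\phi$, which is essentially forced by the shape of the target formula.
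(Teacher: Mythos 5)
Your argument is correct, and in fact the paper states \Cref{le.defct} without proof (it appears as a recorded observation, with only \Cref{re.symmetry} commenting that the formula can equivalently be written as $p-p'+m'-m$ thanks to the conservation law). Your proof supplies exactly the missing justification and uses the natural two-sided strategy: an explicit chain of length $l-l'+n'-n$ to establish the lower bound, and the monovariant $\phi(s)=n_s-l_s$, which you correctly show strictly increases along every strict $\prec$-step, to cap the chain length from above at $\phi(i')-\phi(i)=(n'-n)+(l-l')$.

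A few small points worth noting. First, your three elementary steps (A), (B), (C) are indeed strict $\preceq$-relations, and one should also observe (as you implicitly do) that the intermediate quadruples remain in $\bN^4$: the $a$ applications of (A) never push $l$ below $l'\ge 0$ and the $d$ applications of (B) never push $p$ below $p'\ge 0$. Second, you call (A), (B), (C) ``covers,'' but for the lower bound on the supremum one only needs each step to be a strict $\prec$-relation, which is what you verify; whether they are actual covering relations in the poset is immaterial. Third, the verification that $\phi$ is strictly increasing is the crux of the upper bound, and your case analysis (if both $n_t-n_s$ and $l_s-l_t$ vanish, then the conservation equation forces $m_t-m_s=p_t-p_s$, which together with $m_t\ge m_s$ and $p_t\le p_s$ gives $s=t$) is exactly right. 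The whole argument is clean and self-contained, and more explicit than what the paper leaves implicit.
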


\begin{remark}\label{re.symmetry}
  The category $\bT$ is symmetric with respect to the simultaneous interchange $V \leftrightarrow V_*$, $V^*\leftrightarrow V_*^*$. Numerically, this corresponds to $l\leftrightarrow p$ and $n\leftrightarrow m$. \Cref{le.defct} is compatible with this transformation: according to the last condition in \Cref{eq:ord} we have
  \begin{equation*}
    l-l' + n'-n = p-p' + m'-m,
  \end{equation*}
  so we could have substituted $p-p' + m'-m$  
  for 
  $l-l' + n'-n$ in \Cref{le.defct}. 
\end{remark}

\subsection{Injective resolutions}

We will now show that $\bC$ admits an injective resolution in $\bT$
\begin{equation}\label{eq:15}
  0\to \bC\to I_0\to I_1\to \cdots
\end{equation}
with 
\begin{equation*}
  I_j\cong \Lambda^j F\otimes I. 
\end{equation*}
We will also see that $I_j/\mathrm{Im}(I_{j-1})$ admits an ascending filtration with layers
\begin{equation*}
  \begin{tikzpicture}
    \path (0,0) node[name=1, minimum width=1cm, rectangle split, rectangle split parts=3, draw, rectangle split draw splits=true, align=center] 
    { $\mathbb{S}_{(3,1,\cdots,1)}F$
      \nodepart{two} $\mathbb{S}_{(2,1,\cdots,1)}F$
      \nodepart{three} $\mathbb{S}_{(1,1,\cdots,1)}F$
    } + (0,1.5) node {$\vdots$};
    
  \end{tikzpicture}  
\end{equation*}
where each diagram has $j+1$ rows.

To streamline the notation for such Young diagrams we denote by $(l,j\times 1)$ the diagram with a row of length $l$ and $j$ single-box rows.


To define the maps
\begin{equation}\label{eq:psij}
  \psi_j:\Lambda^j F\otimes I\to \Lambda^{j+1} F\otimes I
\end{equation}
we mimic the procedure used in the definition of \Cref{eq:16}. In fact, that notation will be compatible with \Cref{eq:psij}, in that we will recover that earlier map by setting $j=0$ in the latter. As before, \Cref{eq:psij} will be a colimit as $k\to \infty$ of maps
\begin{equation}\label{eq:psijk}
  \psi^k_j:\Lambda^j F\otimes S^kQ\to \Lambda^{j+1} F\otimes S^{k-1}Q.
\end{equation}

The analogue of \Cref{eq:3} in this context is
\begin{equation}\label{eq:17}
  \begin{tikzpicture}[auto,baseline=(current  bounding  box.center)]
    \path[anchor=base] (0,0) node (1) {$\Lambda^j F\otimes S^{k}Q$}
    +(4,1) node (2) {$\Lambda^jF\otimes Q\otimes S^{k-1}Q$}
    +(8,1) node (3) {$\Lambda^jF\otimes F\otimes S^{k-1}Q$}
    +(12,0) node (4) {$\Lambda^{j+1}F\otimes S^{k-1}Q$};
    \draw[->] (1) to[bend left=6] node[pos=.5,auto] {$\scriptstyle $} (2);
    \draw[->] (2) to[bend left=6] node[pos=.5,auto] {$\scriptstyle $} (3);
    \draw[->] (1) to[bend right=6] node[pos=.5,auto,swap] {$\scriptstyle \psi_j^k$} (4);
    \draw[->] (3) to[bend left=6] node[pos=.5,auto] {$\scriptstyle $} (4);
  \end{tikzpicture}
\end{equation}
where
\begin{itemize}
\item[$-$] the upper left-hand map is $\id_{\Lambda^jF}\otimes \Delta$, with
  \begin{equation*}
    \Delta:S^kQ\to Q\otimes S^{k-1}Q
  \end{equation*}
  the partial comultiplication also appearing in \Cref{eq:3};
\item[$-$] the upper middle map is $\id\otimes\pi\otimes \id$, with $\pi:Q\to F$ the canonical surjection (again as in \Cref{eq:3});
\item[$-$] the upper right-hand map is the multiplication
  \begin{equation*}
    \Lambda^j F\otimes F\to \Lambda^{j+1}F
  \end{equation*}
  in the exterior algebra $\Lambda^{\bullet} F$, tensored with the identity on $S^{k-1}Q$. 
\end{itemize}

To show that the maps \Cref{eq:psij} fit into a resolution \Cref{eq:15} we begin with the following simple observation.

\begin{lemma}\label{le:jrows}
  Let $j,k$ be two positive integers and $X$ be a vector space of dimension larger than $j+1$. The map
\begin{equation}\label{eq:xmap}
  \begin{tikzpicture}[auto,baseline=(current  bounding  box.center)]
    \path[anchor=base] (0,0) node (1) {$\Lambda^j X\otimes S^{k}X$}
    +(4,1) node (2) {$\Lambda^jX\otimes X\otimes S^{k-1}X$}
    +(8,0) node (4) {$\Lambda^{j+1}X\otimes S^{k-1}X$};
    \draw[->] (1) to[bend left=6] node[pos=.5,auto] {$\scriptstyle \id_{\Lambda^j X}\otimes \Delta$} (2);
    \draw[->] (1) to[bend right=6] node[pos=.5,auto,swap] {$\scriptstyle $} (4);
    \draw[->] (2) to[bend left=6] node[pos=.5,auto] {$\scriptstyle \mathrm{mult}\otimes\id_{S^{k-1}X}$} (4);
  \end{tikzpicture}
\end{equation}
with $\Delta:S^k X\to X\otimes S^{k-1}X$ defined as in \Cref{eq:expldelta} annihilates the direct summand $\bS_{(k+1, (j-1)\times 1)} X$
 of $\Lambda^j X\otimes S^{k}X$ and maps the complementary summand  $\bS_{(k, j\times 1)} X$
of $\Lambda^j X\otimes S^{k}X$ isomorphically onto the corresponding summand of the codomain $\Lambda^{j+1} X\otimes S^{k-1}X${.}
\end{lemma}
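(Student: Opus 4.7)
The plan is to view the map in \Cref{eq:xmap} as a $GL(X)$-equivariant morphism and reduce the claim to a single explicit computation via Schur's lemma. First one applies Pieri's rule, which, thanks to the hypothesis $\dim X>j+1$, gives the multiplicity-free decompositions
\begin{equation*}
  \Lambda^j X\otimes S^k X\cong \bS_{(k+1,(j-1)\times 1)}X\oplus\bS_{(k,j\times 1)}X
\end{equation*}
and
\begin{equation*}
  \Lambda^{j+1}X\otimes S^{k-1}X\cong\bS_{(k,j\times 1)}X\oplus\bS_{(k-1,(j+1)\times 1)}X
\end{equation*}
into pairwise non-isomorphic simple $GL(X)$-modules. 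Since the three partitions $(k+1,1^{j-1})$, $(k,1^j)$, $(k-1,1^{j+1})$ are pairwise distinct, Schur's lemma applied to \Cref{eq:xmap} forces the map to annihilate the summand $\bS_{(k+1,(j-1)\times 1)}X$ (which has no partner in the codomain) and to act as a scalar endomorphism on the common summand $\bS_{(k,j\times 1)}X$. This yields the first half of the claim immediately.

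For the second half it suffices to show that this scalar is non-zero. I would pick a basis $e_1,\ldots,e_n$ of $X$ with $n>j+1$ and test the vector
\begin{equation*}
  v:=(e_2\wedge\cdots\wedge e_{j+1})\otimes e_1^k\in\Lambda^j X\otimes S^k X,
\end{equation*}
which has weight $(k,1^j)$. Using the explicit formula \Cref{eq:expldelta} for the partial comultiplication $\Delta$, one computes directly that the map in \Cref{eq:xmap} sends $v$ to $(-1)^j k\cdot(e_1\wedge\cdots\wedge e_{j+1})\otimes e_1^{k-1}$, a manifestly non-zero element of the codomain. The $(k,1^j)$-weight space in $\Lambda^{j+1}X\otimes S^{k-1}X$ is entirely contained in the $\bS_{(k,j\times 1)}X$-summand, as a quick semistandard-tableaux count shows that $\bS_{(k-1,(j+1)\times 1)}X$ has no weight of type $(k,1^j)$: this would require $j+2$ strictly increasing entries in the first column drawn from a content with only $j+1$ distinct symbols. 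Combining this with the vanishing on $\bS_{(k+1,(j-1)\times 1)}X$ guaranteed by Schur's lemma, the non-zero image of $v$ must come from its projection onto $\bS_{(k,j\times 1)}X$, so the restriction of \Cref{eq:xmap} to this summand is non-zero and hence an isomorphism onto the matching summand of the codomain.

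The proof outlined above is conceptually straightforward, and I do not foresee any real obstacle: Pieri's rule dispatches the structural decomposition, Schur's lemma dispatches the first assertion outright, and the second assertion reduces to a single one-line computation on a weight vector. The only mildly delicate ingredient is the weight-multiplicity count in the codomain that rules out contributions from $\bS_{(k-1,(j+1)\times 1)}X$, but this is a routine check.
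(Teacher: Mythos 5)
Your proposal is correct and takes essentially the same route as the paper's proof, which cites the Littlewood--Richardson rule for the same two multiplicity-free decompositions of domain and codomain and then says "the claim can be checked on finite-dimensional vector spaces first, where all four direct summands are irreducible representations of $GL(X)$, then passing to arbitrary $X$ by taking a colimit." What you do is simply make that terse "check" explicit: Schur's lemma forces the map to kill $\bS_{(k+1,(j-1)\times 1)}X$ and act by a scalar on $\bS_{(k,j\times 1)}X$, and you identify the scalar as $(-1)^j k\neq 0$ by evaluating on the weight vector $(e_2\wedge\cdots\wedge e_{j+1})\otimes e_1^k$. The computation is right, and the Kostka-number argument ruling out a $(k,1^j)$-weight in $\bS_{(k-1,(j+1)\times 1)}X$ is correct (though in fact redundant: once Schur's lemma gives vanishing on the first summand and a scalar on the second, a non-zero image can only live in the $\bS_{(k,j\times 1)}X$-part of the codomain). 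The one thing you leave implicit that the paper makes explicit is the reduction from an arbitrary (possibly infinite-dimensional) $X$ to the finite-dimensional case via a colimit over finite-dimensional subspaces; your choice of a finite basis $e_1,\ldots,e_n$ silently assumes $X$ finite-dimensional, so you should add one sentence invoking that colimit to cover the case of interest ($X=F$ is infinite-dimensional).
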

\begin{proof}
  That the domain and codomain decompose as
  \begin{equation*}
    \Lambda^j X\otimes S^{k}X\cong \bS_{(k+1, (j-1)\times 1)}X \oplus \bS_{(k, j\times 1)}X
  \end{equation*}
  and
  \begin{equation*}
    \Lambda^{j+1} X\otimes S^{k-1}X\cong \bS_{(k, j\times 1)}X \oplus \bS_{(k-1, (j+1)\times 1)}X
  \end{equation*}
  respectively follows from the Littlewood-Richardson rule \cite[Appendix A, (A.8)]{fh-bk}. The claim can be checked on finite-dimensional vector spaces first, where all four direct summands are irreducible representations of the algebraic group $GL(X)$, then passing to arbitrary $X$ by taking a colimit.
\end{proof}

Now consider one of the objects $\Lambda^j F\otimes I$, $j\ge 0$ under discussion. Since $I$ has a filtration \Cref{eq:ifilt} with subquotients $S^k F$, $k\ge 0$, the object $\Lambda^j F\otimes I$ has a filtration with subquotients $\Lambda^j F\otimes S^k F$ decomposed as 
\begin{alignat*}{3}
  \Lambda^j F\otimes S^k F&\quad\cong\quad \bS_{(k+1, (j-1)\times 1)}F \quad \oplus\quad &&\bS_{(k, j\times 1)}F&&\quad \text{ for }j,k>0\\
  S^k F &\quad\cong\quad &&\bS_{(k)}F &&\quad \text{ for }j=0,\ k>0\numberthis\label{eq:decs}\\
  \Lambda^j F &\quad\cong\quad \bS_{(1,\ldots, 1)}F&& &&\quad \text{ for }k=0.
\end{alignat*}
Moreover, these decompositions are canonical, i.e. the summands are unique. 

We write
\begin{equation}\label{eq:kj}
  K_j:=\ker\left(\psi_j:\Lambda^j F\otimes I \to \Lambda^{j+1} F\otimes I\right)
\end{equation}
for the kernel of the map \Cref{eq:psij} and
\begin{equation*}
  K_j^k:= K_j\cap \left(\Lambda^j F\otimes S^kQ\right) = \ker\left(\psi_j:\Lambda^j F\otimes S^kQ \to \Lambda^{j+1} F\otimes S^{k-1}Q\right);
\end{equation*}
by convention, we set $K_j^{-1}=\{0\}$. We now have

\begin{lemma}\label{le:ksubq}
  For each $k\ge 0$ the subquotient
  \begin{equation}\label{eq:kfq}
    K_j^k/K_j^{k-1}\subset \Lambda^j F\otimes \left(S^kQ/S^{k-1}Q\right)\cong \Lambda^j F\otimes S^k F
  \end{equation}
  is the $j$-row summand of the latter. 
\end{lemma}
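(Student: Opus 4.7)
The key observation is that $\psi_j^k$ strictly decreases filtration degree with respect to the filtration of $\Lambda^jF\otimes S^kQ$ by the subobjects $\Lambda^jF\otimes S^iQ$, $0\le i\le k$. Indeed, by construction the restriction of $\psi_j^k$ to $\Lambda^jF\otimes S^{k-1}Q\subset\Lambda^jF\otimes S^kQ$ coincides with $\psi_j^{k-1}$, which takes values in $\Lambda^{j+1}F\otimes S^{k-2}Q$. Passing to the associated graded quotients, $\psi_j^k$ thus induces a map
\begin{equation*}
\overline{\psi}:\Lambda^jF\otimes S^kF\longrightarrow \Lambda^{j+1}F\otimes S^{k-1}F,
\end{equation*}
and inspecting the explicit formula \Cref{eq:expldelta} for the partial comultiplication (noting that on a monomial with no factor of $e_0:=\iota(1)\in Q$ every surviving summand of $\Delta$ also has no $e_0$-factor in its $S^{k-1}Q$-component) shows that $\overline{\psi}$ is precisely the map \Cref{eq:xmap} of \Cref{le:jrows} applied to $X=F$.

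The snake lemma applied to the resulting commutative diagram with exact rows, relating the short exact sequences $\Lambda^jF\otimes S^{k-1}Q\hookrightarrow \Lambda^jF\otimes S^kQ\twoheadrightarrow \Lambda^jF\otimes S^kF$ and $\Lambda^{j+1}F\otimes S^{k-2}Q\hookrightarrow \Lambda^{j+1}F\otimes S^{k-1}Q\twoheadrightarrow \Lambda^{j+1}F\otimes S^{k-1}F$ by the vertical maps $\psi_j^{k-1}$, $\psi_j^k$ and $\overline{\psi}$, yields a natural injection
\begin{equation*}
K_j^k/K_j^{k-1}\hookrightarrow \ker\overline{\psi} = \bS_{(k+1,(j-1)\times 1)}F,
\end{equation*}
that is, into the $j$-row summand of $\Lambda^jF\otimes S^kF$ supplied by \Cref{le:jrows}.

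For the reverse inclusion I would fix a vector-space (\emph{not} $\fg$-module) splitting $Q\cong \bC\oplus F$; since the kernel of a $\fg$-module map is determined by its underlying vector space, such a splitting is perfectly legitimate for the purpose of computing $K_j^k$. The splitting yields a vector-space embedding $\Lambda^jF\otimes S^kF\hookrightarrow \Lambda^jF\otimes S^kQ$ whose image consists of monomials having no factor of $e_0$, and therefore meets $\Lambda^jF\otimes S^{k-1}Q$ (spanned by monomials with at least one $e_0$-factor) in zero. A direct computation using \Cref{eq:expldelta} shows that the restriction of $\psi_j^k$ to this embedded copy coincides with the map of \Cref{le:jrows}, so its kernel is the $j$-row summand $\bS_{(k+1,(j-1)\times 1)}F$; this furnishes a vector-space lift of $\bS_{(k+1,(j-1)\times 1)}F$ into $K_j^k$ that maps isomorphically onto its class in $K_j^k/K_j^{k-1}$, giving the desired equality.

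The mildly delicate point is the identification of $\overline{\psi}$ with the combinatorial map of \Cref{le:jrows}; once this is secured, the rest of the argument is formal bookkeeping with the filtration $S^iQ\subset S^{i+1}Q$ and the explicit coproduct formula \Cref{eq:expldelta}.
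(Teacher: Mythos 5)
Your proof is correct and follows essentially the same route as the paper's: both identify the associated graded map with the map of \Cref{le:jrows} applied to $X=F$, and both exploit the vector-space splitting $Q\cong\bC\oplus F$ (equivalently, the grading with $\deg v_0=0$ and $\deg v_\alpha=1$ for $\alpha\ne 0$) to lift the associated-graded kernel computation back to $K_j^k/K_j^{k-1}$. The only difference is cosmetic: where you argue the two inclusions separately via the snake lemma and an explicit homogeneous lift, the paper observes in one stroke that $\psi_j$ arises from a graded map, so its kernel is the direct sum of the kernels of its homogeneous components.
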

\begin{proof}
  The map $\psi_j$ respects the filtrations of its domain and codomain, by
  \begin{equation}\label{eq:filtrations}
    \Lambda^j F\otimes S^k Q\quad\text{and}\quad \Lambda^{j+1} F\otimes S^{k-1} Q
  \end{equation}
  respectively, and the associated graded map $\mathrm{gr}~\psi_j$, in degree $k$, is precisely \Cref{eq:xmap} with $X=F$. By \Cref{le:jrows} this means that the degree-$k$ kernel of $\mathrm{gr}~\psi_j$ is the $j$-row summand of $\Lambda^j F\otimes S^k F$. This verifies the statement at the associated-graded level.

  To conclude, it will suffice to construct gradings on the domain and codomain of $\psi_j$, compatible with $\psi_j$, that give back the filtrations by \Cref{eq:filtrations}. This would then prove that the filtered map $\psi_j$ arises from a grading, and hence that its kernel is the direct sum of the kernels of its homogeneous components. 

  We construct the requisite gradings as in the discussion preceding \Cref{le.correct-ker}: fix a basis $v_{\alpha}$ for $Q$ with $v_0=1\in \bC\subset Q$, and assign
  \begin{equation*}
    \deg v_{\alpha}=
    \begin{cases}
      0&\text{for }\alpha=0\\
      1&\text{otherwise}.
    \end{cases}
  \end{equation*}
  One checks easily that $\psi_j$ preserves degrees, finishing the proof as described above.
\end{proof}

We can now finally complete the discussion on the injective resolution \Cref{eq:15}.

\begin{theorem}\label{th:injresc}
  The morphisms \Cref{eq:psij} fit into an exact sequence \Cref{eq:15}. 
\end{theorem}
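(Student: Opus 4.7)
The plan is to verify first that the $\psi_j$'s compose to zero, and then to deduce exactness by reducing everything to a Koszul-type acyclicity via a grading trick.

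For the complex property $\psi_{j+1}\circ\psi_j=0$, I would unwind diagram \Cref{eq:17} twice. The composition amounts to applying the partial comultiplication $\Delta$ of $S^\bullet Q$ twice, projecting both extracted $Q$-tensorands onto $F$ via $\pi$, and wedging them onto the exterior factor. On a monomial $\omega\otimes q_1\cdots q_k$ with $\omega\in\Lambda^j F$ and $q_i\in Q$, the result is
\[
  \sum_{i\ne i'}\bigl(\omega\wedge\pi(q_i)\wedge\pi(q_{i'})\bigr)\otimes q_1\cdots\widehat{q_i}\cdots\widehat{q_{i'}}\cdots q_k,
\]
which vanishes by pairing $(i,i')$ with $(i',i)$ and invoking $a\wedge b=-b\wedge a$ in $\Lambda^\bullet F$.

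For exactness I would promote the filtration on each $\Lambda^j F\otimes I$ to a genuine grading, using the device from the proof of \Cref{le:ksubq}: fix a basis $\{e_\alpha\}$ of $Q$ with $e_0=1\in\bC\subset Q$, set $\deg e_0=0$ and $\deg e_\alpha=1$ for $\alpha\ne 0$, and grade $S^\bullet Q$, $I=\varinjlim_k S^k Q$ and $\Lambda^j F\otimes I$ accordingly. The degree-$k$ part of $I$ is then canonically identified with $S^k F$ (monomials with exactly $k$ non-unit factors). Using the explicit formula \Cref{eq:expldelta}, $\psi_j$ lowers the $I$-grading by one while raising the wedge degree by one, hence preserves the total grading $n=j+k$. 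The augmented complex \Cref{eq:15} therefore splits as a direct sum over $n\ge 0$ of finite subcomplexes, each of shape
\[
  0\to S^n F\to F\otimes S^{n-1}F\to\Lambda^2 F\otimes S^{n-2}F\to\cdots\to\Lambda^n F\to 0,
\]
augmented on the left by $0\to\bC$ when $n=0$ and by $0\to 0$ otherwise.

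For $n\ge 1$ I then apply \Cref{le:jrows} at each interior position: it supplies the canonical decomposition
\[
  \Lambda^j F\otimes S^{n-j}F\cong\bS_{(n-j+1,(j-1)\times 1)}F\oplus\bS_{(n-j,j\times 1)}F,
\]
and identifies the differential as killing the first summand and sending the second isomorphically onto the matching summand of $\Lambda^{j+1}F\otimes S^{n-j-1}F$. Matching the image at position $j$ with the kernel at position $j+1$ gives exactness in the interior. At the left, $\psi_0$ restricted to positive $I$-degree is injective because $\ker\psi_0=\bC$ by \Cref{le.correct-ker}; at the right, surjectivity of $\Lambda^{n-1}F\otimes F\to\Lambda^n F$ is the $(j,k)=(n-1,1)$ instance of \Cref{le:jrows}. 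For $n=0$ the piece reduces to $0\to\bC\xrightarrow{\id}\bC\to 0$, which is exact since $\psi_0$ annihilates $\bC$.

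The step I expect to be the main technical hurdle is the homogeneity claim for $\psi_j$ with respect to the basis-dependent grading; this is the content that lets one replace the filtered computation with a direct-sum one. It is a concrete monomial computation in the spirit of the one concluding the proof of \Cref{le:ksubq}, after which the remainder of the argument is a clean Koszul-type acyclicity already packaged by \Cref{le:jrows}.
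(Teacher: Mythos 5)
Your proof is correct and follows essentially the same route as the paper's: both promote the filtration on each $\Lambda^j F\otimes I$ to the basis-dependent grading introduced in the proof of \Cref{le:ksubq}, observe that $\psi_j$ respects this grading, and then deduce exactness from the termwise analysis in \Cref{le:jrows} applied to $X=F$. You merely spell out the details the paper leaves implicit -- namely, decomposing the graded complex into the finite Koszul pieces indexed by the total degree $n=j+k$ and checking exactness at each spot -- and you additionally verify $\psi_{j+1}\circ\psi_j=0$ directly, which is not strictly needed once the graded splitting and \Cref{le:jrows} are in hand.
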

\begin{proof}
  The maps $\psi_j$ fit into a sequence 
  \begin{equation}\label{eq:targetseq}
    0\to \bC\to I\stackrel{\psi_0}{\longrightarrow} F\otimes I\stackrel{\psi_1}{\longrightarrow} \Lambda^2 F\otimes I\stackrel{\psi_2}{\longrightarrow} \cdots
  \end{equation}
  (not yet known to be exact) of filtered vector spaces. \Cref{le:jrows} applied to $X=F$ shows that the associated graded sequence is exact, and the conclusion follows from the fact that, as seen in the proof of \Cref{le:ksubq}, the filtrations on the terms of \Cref{eq:targetseq} arise from gradings compatible with the maps $\psi_j$. 
\end{proof}


\begin{corollary}\label{cor.c}
For a simple object $X$ of $\bT$ we have  

\begin{equation*}
\operatorname{Ext}_{\bT}^{j}(X,\bC)=
 \begin{cases}
  0& \textup{ if } X \not\simeq  L_{\lambda,\emptyset,\emptyset,{\lambda^\perp}} \textup{ for }\lambda \textup{ with } |\lambda|=j\\
\mathbb{C}& \textup{ if } X \simeq  L_{\lambda,\emptyset,\emptyset,{\lambda^\perp}}  \textup{ for }\lambda \textup{ with } |\lambda|=j.

 \end{cases}
\end{equation*}
\end{corollary}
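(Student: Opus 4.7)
The approach is to apply $\Hom_{\bT}(X,-)$ to the injective resolution of $\bC$ constructed in \Cref{th:injresc}, whose $j$-th term is $\Lambda^j F\otimes I$, and read off the cohomology.

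The first step is to decompose each $\Lambda^j F\otimes I$ into indecomposable injectives with simple socles. Applying the Cauchy-type identity in part \labelcref{item:2} of \Cref{pr.plth} to $F = W_*\otimes W$ yields
\begin{equation*}
  \Lambda^j F \;=\; \bigoplus_{|\lambda|=j} W_{*\lambda}\otimes W_{\lambda^\perp},
\end{equation*}
so that $\Lambda^j F\otimes I \;=\; \bigoplus_{|\lambda|=j} I_{\lambda,\emptyset,\emptyset,\lambda^\perp}$, each summand being the injective hull of $L_{\lambda,\emptyset,\emptyset,\lambda^\perp}$ by \Cref{thm:hulls}.

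The second step is to compute, for an arbitrary simple $X\in\bT$, the space $\Hom_{\bT}(X,\Lambda^j F\otimes I)$. Any non-zero morphism from a simple object to a direct sum of injective hulls of simples factors through the socle, so combined with the scalar endomorphism algebras supplied by \Cref{th:scalaralgs}, this $\Hom$ space is one-dimensional when $X\cong L_{\lambda,\emptyset,\emptyset,\lambda^\perp}$ for some $\lambda$ with $|\lambda|=j$, and zero otherwise.

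The final observation is that for a fixed simple $X$ of the form $L_{\lambda,\emptyset,\emptyset,\lambda^\perp}$ the integer $|\lambda|$ is determined by $X$, so $\Hom_{\bT}(X,\Lambda^k F\otimes I)$ is non-zero for at most one value of $k$. The differentials in the complex $\Hom_{\bT}(X,\Lambda^\bullet F\otimes I)$ adjacent to that term are thus forced to vanish, and
\begin{equation*}
  \operatorname{Ext}_{\bT}^j(X,\bC) \;=\; \Hom_{\bT}(X,\Lambda^j F\otimes I),
\end{equation*}
from which the stated formula is immediate. I do not anticipate any serious obstacle: the plethysm identity \Cref{pr.plth} does the heavy lifting, and the only bookkeeping subtlety is matching the partitions $\lambda$ and $\lambda^\perp$ with the correct tensorands ($W_*$ versus $W$) dictated by the Cauchy identity.
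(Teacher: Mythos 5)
Your proof is correct and follows essentially the same route as the paper's: apply the injective resolution of \Cref{th:injresc}, decompose $\Lambda^j F$ via the Cauchy identity of \Cref{pr.plth}\labelcref{item:2}, and identify the socle of each $\Lambda^j F\otimes I$ using \Cref{thm:hulls} together with \Cref{th:scalaralgs}. The only difference is that you spell out the (short) argument that the vanishing of the neighboring $\Hom$ terms forces the differentials to vanish, which the paper leaves implicit.
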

\begin{proof}
  The statement follows from the existence of the injective resolution \Cref{eq:15} of the trivial object $\bC$, since by \Cref{thm:hulls}  \begin{equation*}
  \mathrm{soc}I_j=\mathrm{soc}(\Lambda^j F\otimes I)=\Lambda^j F = \Lambda^j(W_*\otimes W),
  \end{equation*}
 which in turn, by \Cref{pr.plth}, \Cref{item:2}, decomposes as
   \begin{equation*}
  \bigoplus_{|\lambda|=j}W_{*\lambda}\otimes W_{\lambda^{\perp}}. 
  \end{equation*}
  
\end{proof}

\subsection{Koszulity}\label{subse:kosz}

We will eventually show that the category $\bT$ is Koszul. To that end, we first need to strengthen \Cref{pr.def-ge} to an equality:

\begin{theorem}\label{thm:right-def}
  The Grothendieck category ${\bT}$ is sharp in the sense of \Cref{def.shrp}: for any two simples $S,T\in {\bT}$ we have
  \begin{equation*}
    \mathrm{Ext}^q(S,T)\ne 0\quad \Rightarrow\quad  d(s,t)=q.
  \end{equation*}
\end{theorem}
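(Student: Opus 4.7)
The plan is to establish the reverse inequality $d(s,t)\le q$, since $d(s,t)\ge q$ is already \Cref{pr.def-ge}. Write $T=L_{\lambda',\mu',\nu',\pi'}$. The strategy is to leverage the injective resolution of the trivial module from \Cref{th:injresc}: because $T\otimes-$ is exact in $\bT$, tensoring that sequence by $T$ produces the exact complex
\[
0\to T\to T\otimes I\to T\otimes F\otimes I\to T\otimes\Lambda^2 F\otimes I\to\cdots.
\]
For each simple summand $L\otimes I$ of the $j$-th term (with $L$ a constituent of $T\otimes\Lambda^j F$), \Cref{pr.soc} applied to the ambient $I_s$ gives $\mathrm{soc}(L\otimes I)=L$, so the socle of the $j$-th term is exactly $T\otimes\Lambda^j F$. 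Combining \Cref{pr.plth}\Cref{item:2} with the Littlewood--Richardson rule yields
\[
T\otimes\Lambda^j F\;\cong\;\bigoplus_{|\lambda|=j,\,\alpha,\beta} c^{\alpha}_{\lambda',\lambda^{\perp}}\,c^{\beta}_{\pi',\lambda}\,L_{\alpha,\mu',\nu',\beta},
\]
and by \Cref{le.defct} each constituent $L_{\alpha,\mu',\nu',\beta}$ has defect exactly $j$ from $T$, since $|\alpha|=|\lambda'|+j$ and $|\beta|=|\pi'|+j$.

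When $T$ is purely thick ($\mu'=\nu'=\emptyset$), every $L$ appearing above is again thick, so $J_L=L$ and $L\otimes I=I\otimes J_L$ is the injective hull of $L$; in this case the complex is a genuine injective resolution, $\operatorname{Ext}^q(S,T)$ is computed as $H^q\operatorname{Hom}(S,T\otimes\Lambda^\bullet F)$, and non-vanishing forces $S$ to appear as a constituent of $T\otimes\Lambda^q F$, whence $d(s,t)=q$.

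The hard part will be the general case, where $L\otimes I\subsetneq I\otimes J_L$ as soon as $L$ has a non-trivial thin component, so the resolution is not by injectives. My plan here is to promote the single complex $T\otimes I_\bullet$ to a Cartan--Eilenberg double complex by injectively resolving each summand $L\otimes I$ via the short exact sequence $0\to L\otimes I\to I\otimes J_L\to I\otimes(J_L/L)\to 0$ and iterating along the socle filtration of $J_L/L$. The main technical obstacle, and the delicate bookkeeping, is to verify that every fresh simple appearing at position $(p,q)$ in the resulting double complex with $q>0$ has defect from $T$ strictly exceeding $p$; granting this together with an outer induction on $q$, the spectral sequence forces any non-zero class in $\operatorname{Ext}^q(S,T)$ to be detected at the bottom row and hence at the socle $T\otimes\Lambda^q F$, yielding $d(s,t)\le q$.
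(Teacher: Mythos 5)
Your treatment of the purely thick case is essentially the same as the paper's \Cref{pr.thk}: tensor the injective resolution \Cref{eq:15} by $T$, observe that the socle of the $j$-th term is $T\otimes\Lambda^jF$, and compute that every constituent of $T\otimes\Lambda^jF$ sits at defect exactly $j$ from $T$ (your Littlewood--Richardson identity and the defect count via \Cref{le.defct} both check out, modulo the harmless relabeling $\lambda\leftrightarrow\lambda^\perp$). That part is correct.

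The general case, however, is where your proposal genuinely diverges from the paper's argument — and where it stops being a proof. You propose a Cartan--Eilenberg double complex built from the non-injective coresolution $T\otimes I_\bullet$ and claim that the key technical step — that every simple constituent of the socle in position $(p,q)$ of the double complex, for $q>0$, has defect $>p$ from $T$ — together with an outer induction on $q$ would finish the argument. You are explicit that this is ``granted'' rather than verified, and that is precisely the content of the theorem. The socle of the position-$(p,1)$ term is controlled by $\mathrm{soc}(J_L/L)$ for the various $L$ appearing in $T\otimes\Lambda^pF$, and positions $(p,q)$ for higher $q$ require iterating injective hulls of quotients $(J_L/L)\otimes I$, which are not semisimple; tracking defects through these cokernels, and then running the spectral sequence, is exactly the delicate bookkeeping you acknowledge but do not carry out. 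As written, the proposal is an outline of a plausible strategy, not a proof.

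For comparison: the paper avoids the spectral sequence entirely. It proves sharpness by induction on $q+|\mu'|+|\nu'|$, with the thick case as base. In the inductive step it embeds $T$ as a summand of $L^{+\ell}_{\lambda',\beta,\nu',\pi'}$ (one box removed from $\mu'$), invokes \Cref{le.exact} to produce a short exact sequence $0\to L^{+\ell}_{\lambda',\beta,\nu',\pi'}\to V^*\otimes L_{\lambda',\beta,\nu',\pi'}\to H'\to 0$, and applies the long exact Ext sequence, splitting into two cases handled by the inductive hypothesis — one crucially using the fact that $V^*\otimes(\text{injective resolution of }L_{\lambda',\beta,\nu',\pi'})$ is again an injective resolution, and that defects are additive in this poset (both pieces follow from \Cref{le.defct}). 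This ``peel off one box at a time'' induction sidesteps having to resolve the non-injective terms $L\otimes I$ explicitly. If you want to complete your route, you would in effect need to prove an analogue of the paper's induction in order to control the $E_1$ page, so you might as well do it the paper's way: the reduction to a single box and a single long exact sequence is considerably less bookkeeping than a double complex.
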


We do this in stages, considering the following particular case first. 

\begin{proposition}\label{pr.thk}
  \Cref{thm:right-def} holds when $T$ is purely thick. 
\end{proposition}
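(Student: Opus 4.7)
The plan is to build an explicit injective resolution of $T$ by tensoring the resolution of $\bC$ from \Cref{th:injresc} with $T$, and then to analyze the complex obtained after applying $\Hom(S,-)$.

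Since $T$ is purely thick, I will write $T=L_{\lambda,\emptyset,\emptyset,\pi}=W_{*\lambda}\otimes W_\pi$ with $l=|\lambda|$, $p=|\pi|$, so $t=(l,0,0,p)$. First I would tensor \Cref{eq:15} with $T$ to produce an exact sequence
\begin{equation*}
  0\to T\to I\otimes T\to F\otimes I\otimes T\to \Lambda^2 F\otimes I\otimes T\to\cdots,
\end{equation*}
exact because $\bullet\otimes T$ is an exact endofunctor of $\bT$. Each term $\Lambda^j F\otimes I\otimes T$ is a direct summand of $I\otimes W_*^{\otimes j+l}\otimes W^{\otimes j+p}=I_{j+l,0,0,j+p}$ (using $\Lambda^j F\subset(W_*\otimes W)^{\otimes j}$), hence injective by \Cref{thm:hulls}; so this is a bona fide injective resolution of $T$.

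Next I would identify the socles. By \Cref{pr.soc} applied to $I_{j+l,0,0,j+p}$, the ambient injective has socle $W_*^{\otimes j+l}\otimes W^{\otimes j+p}$, and passing to the direct summand yields
\begin{equation*}
  \mathrm{soc}(\Lambda^j F\otimes I\otimes T)\;=\;\Lambda^j F\otimes T.
\end{equation*}
Combining part \emph{(\labelcref{item:2})} of \Cref{pr.plth} with the Littlewood--Richardson rule, every simple summand of this socle has the form $L_{\lambda',\emptyset,\emptyset,\pi'}$ with $|\lambda'|=j+l$ and $|\pi'|=j+p$, so each lies in the stratum $(j+l,0,0,j+p)$.

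Finally I would compute $\mathrm{Ext}^q(S,T)$ as the $q$-th cohomology of $\Hom(S,\Lambda^\bullet F\otimes I\otimes T)$. Since any non-zero map from a simple $S$ factors through the socle of its target, $\Hom(S,\Lambda^j F\otimes I\otimes T)$ vanishes unless $S$ is purely thick and sits in stratum $(j+l,0,0,j+p)$; for a fixed $S$ this constraint determines $j$ uniquely, so at most one term of the complex is non-zero. Consequently $\mathrm{Ext}^q(S,T)\neq 0$ forces $s=(q+l,0,0,q+p)$, and \Cref{le.defct} then gives $d(s,t)=(q+l)-l+0-0=q$, as required. The most delicate step is justifying that each $\Lambda^j F\otimes I\otimes T$ is genuinely injective and that its socle is computed as above; once this bookkeeping is set up, the fact that tensoring with $\Lambda^j F$ preserves pure thickness makes the cohomological analysis essentially automatic, and explains why the purely thick case can be handled before the general one.
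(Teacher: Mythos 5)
Your proof is correct and follows essentially the same route as the paper: the paper's own argument also tensors the injective resolution \Cref{eq:15} of $\bC$ with $T$ and observes (via \Cref{thm:hulls} and \Cref{cor.c}) that the socles of the resulting terms consist of simples of the right defect. You simply spell out the details that the paper leaves implicit — that each $\Lambda^j F\otimes I\otimes T$ is injective as a direct summand of $I_{j+l,0,0,j+p}$, that its socle is $\Lambda^j F\otimes T$ with all constituents in a single stratum, and that consequently the $\Hom(S,-)$ complex is supported in at most one degree, pinning down $q=d(s,t)$.
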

\begin{proof}
Let $T\in \cS_t$. We have to argue that there is some injective resolution
  \begin{equation*}
    0\to T\to K_0\to K_1\to \cdots
  \end{equation*}
  so that the socle of $K_q$ is a sum of simple objects $S$ with $d(S,T)=q$. This follows from \Cref{cor.c} for the trivial object $T=\bC$ and in general for thick $T$ from \Cref{thm:hulls}, which implies that we can obtain an injective resolution for $T$ by simply tensoring \Cref{eq:15} with $T$.
\end{proof}


In order to push past purely thick objects we need a version of \cite[Lemma 3.13]{us0}, requiring some notation: for a quadruple $(\lambda,\mu,\nu,\pi)$ we write $L^{+\ell}_{\lambda,\mu,\nu,\pi}$ for the direct sum of all $L_{\lambda,\mu',\nu,\pi}$ with $\mu'$ obtained by adding a box to $\mu$. Here $\ell$ stands for {\it left}, and we have a similarly defined object $L^{+r}_{\lambda,\mu,\nu,\pi}$ (for {\it right}) obtained by enlarging $\nu$ instead.

\begin{lemma}\label{le.exact}
  Consider a simple object $L_{\lambda,\mu,\nu,\pi}${.}
  \begin{enumerate}[(a)]
  \item\label{item:14} We have an exact sequence
    \begin{equation}\label{eq:18}
      0\to L^{+\ell}_{\lambda,\mu,\nu,\pi}\to V^*\otimes L_{\lambda,\mu,\nu,\pi} \to H \to 0
    \end{equation}
    where $H$ is a sum of simple objects $L_{\lambda',\mu,\nu,\pi}$ with $|\lambda'|=|\lambda|+1$ and $L_{\lambda,\mu,\nu',\pi}$ with $|\nu'|=|\nu|-1$. 
  \item\label{item:15} Tensoring with $V_*^*$ produces a similar exact sequence, containing $L_{\lambda,\mu,\nu,\pi}^{+r}$ rather than $L_{\lambda,\mu,\nu,\pi}^{+\ell}$. 
  \end{enumerate}
\end{lemma}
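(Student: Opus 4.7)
The plan for part (a) is to tensor $L:=L_{\lambda,\mu,\nu,\pi}$ with the short exact sequence $0\to V_*\to V^*\to W_*\to 0$, producing
\[
  0\to V_*\otimes L\to V^*\otimes L\to W_*\otimes L\to 0,
\]
and analyze each piece. The right-hand quotient is easy: since $W_*\otimes W_{*\lambda}\cong\bigoplus_{\lambda'=\lambda+\text{box}}W_{*\lambda'}$ by Pieri (which holds in any $\bC$-linear tensor category), $W_*\otimes L$ is the semisimple sum $\bigoplus_{|\lambda'|=|\lambda|+1}L_{\lambda',\mu,\nu,\pi}$.

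The substantive step is to establish an exact sequence
\[
  0\to L^{+\ell}_{\lambda,\mu,\nu,\pi}\to V_*\otimes L\to\bigoplus_{|\nu'|=|\nu|-1}L_{\lambda,\mu,\nu',\pi}\to 0.
\]
For this I would use the $\fg$-equivariant evaluation pairing $V_*\otimes V\to\bC$ applied to each of the $n$ $V$-tensorands of $V_{\mu,\nu}\subset V^{*\otimes m}\otimes V^{\otimes n}$, producing a $\fg$-map $V_*\otimes V_{\mu,\nu}\to\bigoplus_{|\nu'|=|\nu|-1}V_{\mu,\nu'}$ that is onto each factor (since each target is simple) and hence onto the direct sum (since the factors are pairwise non-isomorphic). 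Over $\fsl(\infty)$ the kernel equals $\bigoplus_{|\mu'|=|\mu|+1}V_{\mu',\nu}$ by the classical Pieri decomposition of tensor $\fsl(\infty)$-modules. These composition factors all share the same poset index $s=(0,|\mu|+1,|\nu|,0)$, so \Cref{pr.def-ge} applied with $d(s,s)=0$ forces $\mathrm{Ext}^1$ among them to vanish; the kernel is therefore already $\fg$-semisimple and equal to $\bigoplus V_{\mu',\nu}$. Tensoring through with $W_{*\lambda}$ and $W_\pi$ produces the desired subobject $L^{+\ell}$ and semisimple quotient for $V_*\otimes L$.

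Combining, the quotient $V^*\otimes L/L^{+\ell}_{\lambda,\mu,\nu,\pi}$ fits into an extension of $\bigoplus_{\lambda'}L_{\lambda',\mu,\nu,\pi}$ by $\bigoplus_{\nu'}L_{\lambda,\mu,\nu',\pi}$. This extension splits because for the relevant indices $s=(l+1,m,n,p)$ and $t=(l,m,n-1,p)$ neither $s\preceq t$ nor $t\preceq s$ holds (the inequality $n\le n-1$ fails), so \Cref{pr.def-ge} yields $\mathrm{Ext}^1=0$ in both directions. Hence $H$ is the asserted direct sum, proving (a).

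Part (b) is entirely parallel, either by tensoring $L$ with $0\to V\to V_*^*\to W\to 0$, or more efficiently by invoking the $V\leftrightarrow V_*$, $V^*\leftrightarrow V_*^*$ symmetry of \Cref{re.symmetry}, which interchanges $(\lambda,\mu)\leftrightarrow(\pi,\nu)$ and sends $L^{+\ell}$ to $L^{+r}$. The main obstacle throughout is the middle step: identifying $V_*\otimes V_{\mu,\nu}$ as a (in general non-split) extension with socle $\bigoplus V_{\mu+\text{box},\nu}$ and quotient $\bigoplus V_{\mu,\nu-\text{box}}$, which requires both the explicit contraction construction and the \Cref{pr.def-ge}-based Ext-vanishing to upgrade $\fsl(\infty)$-semisimplicity to $\fg$-semisimplicity for the socle.
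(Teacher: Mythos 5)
Your proof is correct and follows essentially the same approach as the paper's: tensor $0\to V_*\to V^*\to W_*\to 0$ with $L$, identify $L^{+\ell}_{\lambda,\mu,\nu,\pi}$ as the socle of $V_*\otimes L$, and invoke \Cref{pr.def-ge} together with the non-comparability of the indices $(l+1,m,n,p)$ and $(l,m,n-1,p)$ in the poset \Cref{eq:ord} to split the resulting extension defining $H$. You simply spell out the intermediate steps — the socle filtration of $V_*\otimes V_{\mu,\nu}$ and the passage from $\fsl(\infty)$-semisimplicity to $\fg$-semisimplicity of the kernel, the latter also obtainable directly from the fact that $\fsl(\infty)$ is an ideal of $\fg$ so that $\fg$ preserves $\fsl(\infty)$-isotypic components — that the paper delegates to the analogous \cite[Lemma 3.13]{us0}.
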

\begin{proof}
  We focus on \emph{(\labelcref{item:14})}, the other half being entirely analogous.

  The proof follows the same line of reasoning as that of \cite[Lemma 3.13]{us0}. We first tensor the extension
  \begin{equation*}
    0\to V_*\to V^*\to W_*\to 0
  \end{equation*}
  with $L_{\lambda,\mu,\nu,\pi}$ to obtain a sequence \Cref{eq:18} with an as yet unidentified $H$, itself fitting into an extension
  \begin{equation}\label{eq:19}
    0\to \widetilde{H} \to H \to W_*\otimes L_{\lambda,\mu,\nu,\pi}\to 0,
  \end{equation}
  with $\widetilde{H}$ being a direct sum of simples obtained by evaluating one tensorand $V_*$ against the $\nu$ component of $L_{\lambda,\mu,\nu,\pi}$. It follows that $\widetilde{H}$ is a direct sum of simple objects $L_{\lambda,\mu,\nu',\pi}$ with $|\nu'|=|\nu|-1$, and the splitting of \Cref{eq:19} follows from \Cref{pr.def-ge} and the observation that the indices of simple direct summands
   $L_{\lambda',\mu,\nu,\pi}$
    of $W_*\otimes L_{\lambda,\mu,\nu,\pi}$ and
    $L_{\lambda,\mu,\nu',\pi}$  of $\widetilde{H}$  are not comparable with respect to the partial order  \Cref{eq:ord}.
\end{proof}

\pf{thm:right-def}
\begin{thm:right-def}
This too presents no substantive difficulties beyond those encountered in \cite[Theorem 3.11]{us0}. Setting $s=:(\lambda,\mu,\nu,\pi)$ and $t=:(\lambda',\mu',\nu',\pi')$, the argument proceeds by induction on $q+|\mu'|+|\nu'|$. The base case follows from \Cref{pr.thk} for empty diagrams $\mu'$ and $\nu'$, and trivially for $q=0$. 

If $\mu'$ and $\nu'$ are empty we can fall back on \Cref{pr.thk}. Otherwise, suppose for instance that $\mu'$ is non-empty. We can then embed $T$ as a direct summand in $L^{+\ell}_{\lambda',\beta,\nu',\pi'}$ 
for $|\beta|=|\mu'|-1$. The assumed non-vanishing of $\mathrm{Ext}^q(S,T)$ and the long exact ext sequence applied to the extension
\begin{equation*}
  0\to L^{+\ell}_{\lambda',\beta,\nu',\pi'}\to V^*\otimes L_{\lambda',\beta,\nu',\pi'} \to H' \to 0
\end{equation*}
provided by \Cref{le.exact} forces us into one of two cases:

\vspace{.5cm}

{\bf 1: $\mathrm{Ext}^{q-1}(S,H')\ne 0.$} By the induction hypothesis we have $d(S,U)=q-1$ for some simple direct summand $U$ of $H'$, and the conclusion follows from this and the fact that $d(U,T)=1$ for all such $U$ (by the description of $H'$ in \Cref{le.exact} and the formula for the defect provided by \Cref{le.defct}). 

\vspace{.5cm}

{\bf 2: $\mathrm{Ext}^{q}(S,V^*\otimes L_{\lambda',\beta,\nu',\pi'})\ne 0.$} This means that $S$ is a direct summand of the socle of $Z_q$ for any injective resolution
\begin{equation}\label{eq:20}
  0\to V^*\otimes L_{\lambda',\beta,\nu',\pi'}\to Z_0\to Z_1\cdots.
\end{equation}
By induction we know that the socle of the $q^{th}$ term of an injective resolution
\begin{equation}\label{eq:21}
  0\to L_{\lambda',\beta,\nu',\pi'}\to Y_0\to Y_1\cdots
\end{equation}
consists of simples $U$ with $d(U,L_{\lambda',\beta,\nu',\pi'})=q$. Now note that an injective resolution \Cref{eq:20} can be obtained by tensoring \Cref{eq:21} with $V^*$. The simple direct summands of the socle of $V^*\otimes Y_q$, including $S$, differ from those of $Y_q$ in that their $\mu$ diagrams have one extra box, meaning that indeed
\begin{equation*}
  d(S,T) = d(U,L_{\lambda',\beta,\nu',\pi'})=q.
\end{equation*}

\vspace{.5cm}

The case when $\mu'$ is empty but $\nu'$ is not proceeds analogously, making use of part \emph{(\labelcref{item:15})} of \Cref{le.exact} rather than \emph{(\labelcref{item:14})}. 
\end{thm:right-def}

As a direct consequence of \Cref{thm:kosz-gen,thm:right-def} we have

\begin{theorem}\label{thm:4kosz}
  The ordered Grothendieck category ${\bT}$ is Koszul in the sense of \Cref{def.ksz}.
  \qedhere
\end{theorem}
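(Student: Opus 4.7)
The plan is to invoke Theorem \ref{thm:kosz-gen} directly: that result reduces Koszulity of an ordered Grothendieck category $\cC$ to two hypotheses, namely (i) that each layer of the socle filtration of every indecomposable injective in $\cC$ has finite length, and (ii) that $\cC$ is sharp in the sense of \Cref{def.shrp}. Hypothesis (ii) is exactly the content of \Cref{thm:right-def}, which has already been proven. So the entire argument collapses to checking (i) in $\bT$ and then applying \Cref{thm:kosz-gen}.

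For (i): by \Cref{thm:hulls} the indecomposable injectives of $\bT$ are the objects $I_{\lambda,\mu,\nu,\pi}$, realized as direct summands of $I_s = J_s \otimes I$ for $s = (|\lambda|,|\mu|,|\nu|,|\pi|)$. Here $J_s$ has finite length by construction, while $I = \varinjlim_k S^k Q$ admits the ascending filtration \Cref{eq:ifilt} with successive subquotients $S^k F = S^k(W \otimes W_*)$. By \Cref{pr.plth}\labelcref{item:1} each $S^k F$ decomposes as the \emph{finite} direct sum $\bigoplus_{|\alpha|=k} W_{*\alpha} \otimes W_\alpha$ of simple objects, so tensoring by the finite-length object $J_s$ shows that each $J_s \otimes S^k F$ has finite length. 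Refining the filtration of $I_s$ obtained this way to match the socle filtration, each semisimple layer of the socle filtration of $I_s$—and hence of its summand $I_{\lambda,\mu,\nu,\pi}$—is pieced together from finitely many of these finite-length constituents (the layers involve only bounded values of $k$ once one descends a fixed number of steps in the poset $(\cP,\preceq)$, since the defect $d(i,i') = l - l' + n' - n$ from \Cref{le.defct} is finite). Hence every socle layer has finite length.

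With (i) and (ii) both verified, \Cref{thm:kosz-gen} applies directly and yields that $\bT$ is Koszul. The potentially subtle point in this plan is the bookkeeping that matches the ``$k$-filtration'' inherited from $I = \varinjlim_k S^k Q$ to the socle filtration, but this is controlled by the ordered Grothendieck structure established in \Cref{pr.is-ordered}: once one knows the injective $I_{\lambda,\mu,\nu,\pi}$ lies over a definite element of the poset, only finitely many poset elements can appear at each socle depth, and each contributes finitely many simples.
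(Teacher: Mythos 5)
Your proposal takes precisely the paper's route: the paper disposes of \Cref{thm:4kosz} in one line, ``as a direct consequence of \Cref{thm:kosz-gen} and \Cref{thm:right-def},'' i.e.\ sharpness plus finite socle layers yield Koszulity. You spell out the finite-length hypothesis (i) that the paper leaves implicit — that each indecomposable injective $I_{\lambda,\mu,\nu,\pi}$, built as a summand of $J_s \otimes I$ with $I = \varinjlim_k S^kQ$ filtered by the finite-length pieces $S^k F$, has finite socle layers because only finitely many poset elements sit at a given defect from $s$ — which is the correct and necessary justification, so your argument is a correct, slightly more explicit version of the paper's proof.
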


Furthermore, we have the following analogue of \cite[Corollary 3.19]{us0} and \cite[Corollary 4.25 (d)]{us1}. In the statement, $\bT_{fin}\subset \bT$ denotes the full subcategory consisting of finite-length objects.

\begin{theorem}\label{th:koszcoalg}
  The Grothendieck category ${\bT}$ is equivalent to the category $\cM^C$ of comodules over a Koszul graded coalgebra $C$, with $\bT_{fin}\simeq \cM^C_{fin}$.
\end{theorem}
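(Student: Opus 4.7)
The plan is to invoke Takeuchi's recognition theorem \Cref{thm:tak} to realize $\bT$ as $\cM^C$ for a coalgebra $C$, and then to upgrade the ordered-Grothendieck Koszulity of \Cref{thm:4kosz} to a statement about $C$ as a graded coalgebra. The strategy parallels \cite[Corollary 3.19]{us0} and \cite[Corollary 4.25(d)]{us1}.

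First I would verify the hypotheses of \Cref{thm:tak}. Endomorphism rings of simples are already known to be $\bC$ by \Cref{th:scalaralgs}, hence certainly finite dimensional over $\bC$. For local finiteness (\Cref{def:loc-fin}), observe that each of $W$, $W_*$, $V^*$, $V_*^*$ has length at most two, so every $J_s$ has finite length; combined with the filtration $I=\varinjlim_k S^kQ$ whose subquotients $S^{k+1}F$ are plainly of finite length, this makes each injective hull $I_s=J_s\otimes I$ a directed union of the finite-length subobjects $J_s\otimes S^kQ$. Together with \Cref{def.ord-gr}(\labelcref{item:5}), this shows that the finite-length objects generate $\bT$, so \Cref{thm:tak} produces a coalgebra $C$ with $\bT\simeq \cM^C$ and $\bT_{fin}\simeq \cM^C_{fin}$.

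Next I would equip $C$ with the grading whose degree-$n$ piece $C_n$ is spanned by the matrix coefficients coming from the $n$-th socle layer of the indecomposable injectives $I_{\lambda,\mu,\nu,\pi}$; by \Cref{pr.soc}, \Cref{thm:hulls}, and the sharpness statement \Cref{thm:right-def}, this $n$-th layer consists precisely of simples $L_{\lambda',\mu',\nu',\pi'}$ at defect $d=n$ from $L_{\lambda,\mu,\nu,\pi}$. Sharpness is what makes this grading coalgebra-compatible, since every nontrivial $\mathrm{Ext}^1$ between simples then shifts the defect by exactly one and the comultiplication of $C$ respects the resulting filtration. With this grading in hand, the assertion that $C$ is a Koszul graded coalgebra is equivalent to the assertion that $\mathrm{Ext}^\bullet_{\cM^C}(C_0,C_0)$ is generated in degree one, which under the equivalence $\bT\simeq \cM^C$ is precisely \Cref{def.ksz} as supplied by \Cref{thm:4kosz}.

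I expect the most delicate point to be the bookkeeping needed to identify the socle-filtration grading on $C$ constructed above with the grading underlying the standard notion of Koszul graded coalgebra, i.e.\ to check that the categorical Koszulity of \Cref{def.ksz} matches the classical graded-coalgebra notion under the correspondence of \Cref{thm:tak}. This verification, however, has been carried out in the analogous settings of \cite[Corollary 3.19]{us0} and \cite[Corollary 4.25(d)]{us1}, and the same argument transfers to $\bT$ without substantive change.
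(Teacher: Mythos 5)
Your proposal follows exactly the paper's approach: invoke Takeuchi's recognition theorem (\Cref{thm:tak}) after verifying local finiteness via the finite-length truncations $J_s\otimes S^kQ$ of the indecomposable injectives $I_s$, and then identify the Koszul grading via the defect/socle-layer filtration supplied by \Cref{thm:right-def} and \Cref{thm:4kosz}. The paper's actual proof is more terse (it only writes out the Takeuchi verification and treats the grading as understood from \Cref{thm:4kosz} and the end-of-section discussion), so your elaboration of the grading step is faithful filling-in rather than a divergence. One small caution: the inference ``each tensorand of $J_s$ has length $\le 2$, hence $J_s$ has finite length'' is not automatic for Lie algebra modules in general---finite length of the specific objects $J_s$ rests on the socle-filtration calculus for tensor modules (e.g.\ as invoked in the proof of \Cref{pr.is-ordered} and in \cite{us0})---but the conclusion you need is correct and is taken for granted by the paper as well.
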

\begin{proof}
  The hypotheses of \Cref{thm:tak} are met (for the ground field $\bC$): ${\bT}$ is generated by the finite-length objects in $\bT$, since every object is isomorphic to a subquotient of a direct sum of indecomposable injectives $I_s$ as defined in \Cref{eq:3}, and in turn the injectives $I_s$ are unions of their finite-length truncations
  \begin{equation*}
    J_s\otimes S^kQ\text{ for } k\in \bZ_{>0}. 
  \end{equation*}
  Moreover, according to \Cref{th:scalaralgs}, the endomorphism ring of a simple object $L_{\lambda,\mu,\nu,\pi}$ as in \Cref{eq:s} is the field $\bC$.
\end{proof}

\subsection{An internal commutative algebra and its modules}\label{subse:intalg}

The object $I$ has a structure of commutative algebra internal to the tensor category ${}_{\fg}\md$ of $\fg$-modules. To see this, we observe that $I$ is isomorphic as a $\fg$-module to a quotient algebra of the symmetric algebra $S^{\bullet}Q$.
 Indeed, denote by $a$ the distinguished element $1\in \bC\subset Q$ of the degree-one component 
  $Q\subset S^{\bullet}Q$
and consider the commutative algebra 
\begin{equation*}
  S^{\bullet}Q/(a-1),
\end{equation*}
where 1 ist the unit of the symmetric algebra and $(a-1)$ is the ideal generated by $a-1$. This ideal is clearly $\fg$-stable and $S^{\bullet}Q/(a-1)$ is an algebra in ${}_\fg \md$. Moreover, the definition of $I$ as $\varinjlim S^k Q$ implies that there is an isomorphism of $\fg$-modules \begin{equation*}
I\cong S^{\bullet}Q/(a-1).
\end{equation*}

	
	
We will be interested in the category ${}_I\bT$ of $I$-modules internal to $\bT$. This is clearly a Grothendieck category. Moreover, the forgetful functor
\begin{equation*}
  \cat{forget}:{}_I\bT\to \bT
\end{equation*}
fits into an adjunction
\begin{equation}\label{eq:tit}
 \begin{tikzpicture}[auto,baseline=(current  bounding  box.center)]
   \path[anchor=base] (0,0) node (1) {$\bT$} 
   +(4,0) node (2) {${}_I\bT.$}
   +(2,0) node (3) {$\bot$}
   ;
   \draw[->] (1) to[bend left=16] node[pos=.5,auto] {$\scriptstyle I\otimes\bullet$} (2);
   \draw[->] (2) to[bend left=16] node[pos=.5,auto] {$\scriptstyle \cat{forget}$} (1);
 \end{tikzpicture}
\end{equation}
We refer to $I$-modules in the image of $I\otimes \bullet$ as {\it free}. We will see that tensoring with $I$ has the effect of ``partially semisimplifying'' $\bT$, in the following sense.

\begin{proposition}\label{pr:ss}
  For every positive integer $n$, the filtration
  \begin{equation*}
    0\subset \bC\subset Q\subset S^2Q\subset\cdots\subset S^nQ
  \end{equation*}
  splits in ${}_I\bT$ upon tensoring it with $I$. Consequently, for every $n$ the object $I\otimes S^nQ$ is injective in $\bT$. 
\end{proposition}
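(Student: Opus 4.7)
The plan is to combine an explicit plethysm decomposition of the putative filtration layers with a short homological argument based on the adjunction $I \otimes -\dashv \cat{forget}$, splitting the filtration one layer at a time and deducing injectivity at the end.

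First, I would verify directly that each would-be layer $I \otimes S^k F$ is already injective in $\bT$. Applying part (a) of \Cref{pr.plth} to $F = W_* \otimes W$ gives
\[
S^k F \cong \bigoplus_{|\lambda|=k} W_{*\lambda} \otimes W_\lambda,
\]
hence, recalling the definition $I_s := I \otimes J_s$ from \Cref{eq:13},
\[
I \otimes S^k F \cong \bigoplus_{|\lambda|=k} I \otimes J_{\lambda,\emptyset,\emptyset,\lambda} = \bigoplus_{|\lambda|=k} I_{\lambda,\emptyset,\emptyset,\lambda},
\]
and by \Cref{thm:hulls} each summand is an injective hull, so the sum is injective in $\bT$.

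Next, I would record the adjunction-based identity
\[
\mathrm{Ext}^q_I(I \otimes X, M) \cong \mathrm{Ext}^q_\bT(X, M), \qquad X \in \bT,\ M \in {}_I \bT,
\]
which follows because the forgetful functor is right adjoint to the exact left adjoint $I \otimes -$ and therefore preserves injectives: any injective resolution of $M$ in ${}_I\bT$ forgets to an injective resolution of $M$ in $\bT$, and applying $\Hom_I(I \otimes X,-) = \Hom_\bT(X,-)$ yields the claimed isomorphism. I would then induct on $n$, simultaneously proving that $I \otimes S^n Q \cong \bigoplus_{k\le n} I \otimes S^k F$ in ${}_I\bT$ and that $I \otimes S^n Q$ is injective in $\bT$. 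The base $n=0$ reduces to $I \otimes S^0 Q = I$, injective by \Cref{thm:hulls}. For the inductive step I would tensor
\[
0 \to S^{n-1}Q \to S^n Q \to S^n F \to 0
\]
with $I$; the resulting extension class lies in
\[
\mathrm{Ext}^1_I(I \otimes S^n F,\, I \otimes S^{n-1}Q) \cong \mathrm{Ext}^1_\bT(S^n F,\, I \otimes S^{n-1}Q),
\]
and the induction hypothesis rewrites $I \otimes S^{n-1}Q$ as a direct sum of the injective objects $I \otimes S^k F$ from the first step, so this Ext group vanishes. The sequence then splits in ${}_I\bT$, and forgetting to $\bT$ presents $I \otimes S^n Q$ as a finite direct sum of injectives.

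The main obstacle I anticipate is conceptual rather than technical: one has to arrange the joint induction so that the vanishing of $\mathrm{Ext}^1$ rests on an \emph{independent} injectivity statement for the subquotients $I \otimes S^k F$, rather than on what one is in the middle of proving. The plethysm decomposition in the first step is precisely what breaks this potential circularity; once it is in place, the rest of the proof is a formal combination of the adjunction isomorphism and the inductive splitting.
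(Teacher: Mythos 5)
Your proof is correct and, at its core, uses the same ingredients as the paper's: the plethysm decomposition $S^kF\cong\bigoplus_{|\lambda|=k}W_{*\lambda}\otimes W_\lambda$ combined with \Cref{thm:hulls} to get injectivity of the layers $I\otimes S^kF$, and the adjunction $I\otimes-\dashv\cat{forget}$ to transfer splitting information between $\bT$ and ${}_I\bT$. The one genuine difference is in how the splitting itself is extracted. The paper constructs an explicit retraction at each stage: for $n=1$ it observes that the inclusion $Q\hookrightarrow I$ in $\bT$ transposes, via the adjunction, to an $I$-module map $\sigma:I\otimes Q\to I$ restricting to the identity on $I\otimes\bC$, and for $n\ge 2$ it uses injectivity of $I\otimes S^{n-1}Q$ (known from the previous step) to extend $S^{n-1}Q\hookrightarrow I\otimes S^{n-1}Q$ along $S^{n-1}Q\subset S^nQ$ and then transposes again. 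You instead phrase the same content homologically: since $\cat{forget}$ is right adjoint to the exact $I\otimes-$, it preserves injectives, whence $\mathrm{Ext}^1_{{}_I\bT}(I\otimes X,M)\cong\mathrm{Ext}^1_\bT(X,\cat{forget}M)$, and the inductive hypothesis makes the relevant $\mathrm{Ext}^1$ vanish. Your version is slightly more uniform — the $n=1$ base case needs no special trick, only injectivity of $I$ itself — while the paper's version produces the splittings concretely without invoking derived functors. Both are valid, and your worry about circularity is correctly resolved: the injectivity of $I\otimes S^kF$ is established independently up front, so the induction stands on its own.
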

\begin{proof}
  We prove this inductively on $n$, starting with $n=1$. In this case the claim is that the embedding
  \begin{equation*}
    I\cong I\otimes \bC\subset I\otimes Q
  \end{equation*}
  splits in ${}_I\bT$. To see this, consider the embedding
  \begin{equation*}
    Q\to \cat{forget}~I
  \end{equation*}
  in $\bT$. It corresponds, via the adjunction \Cref{eq:tit}, to a morphism in $_I\bT$
  \begin{equation*}
    {\sigma:}\ I\otimes Q\to I
  \end{equation*}
  that is clearly the identity on the $I$-submodule
  \begin{equation*}
    I\cong I\otimes \bC\subset I\otimes Q.
  \end{equation*}
  The morphism $\sigma$ is the required splitting, concluding the base case $n=1$ of the induction.

  The argument also shows that we have a decomposition
  \begin{equation}\label{eq:iqiif}
    I\otimes Q\cong I\oplus (I\otimes F) 
  \end{equation}
  in ${}_I\bT$ (and hence also in $\bT$), implying that $I\otimes Q\in \bT$ is injective (by \Cref{thm:hulls} for instance, which shows that both summands in \Cref{eq:iqiif} are injective).

  We regard $Q$ as a subobject of $S^2Q$ via the embedding $Q\hookrightarrow S^2Q$ described in \Cref{eq:sksk1}. By the injectivity of $I\otimes Q\in \bT$ noted above, the embedding
  \begin{equation*}
    Q\cong \bC\otimes Q\subset I\otimes Q
  \end{equation*}
  extends to a morphism in $\bT$
  \begin{equation*}
    S^2Q\to I\otimes Q. 
  \end{equation*}
  Once more, the adjunction \Cref{eq:tit} retrieves a morphism in $_I\bT$
  \begin{equation*}
    I\otimes S^2Q\to I\otimes Q
  \end{equation*}
  that restricts to the identity on the submodule
  \begin{equation*}
    I\otimes Q\subset I\otimes S^2 Q,
  \end{equation*}
  showing that this embedding splits in ${}_I\bT$. This proves the main claim for $n=2$ and the fact that there is a splitting
    \begin{equation*}
      I\otimes S^2Q\cong \left(I\otimes Q\right)\oplus \left(I\otimes S^2F\right),
    \end{equation*}
    meaning that $I\otimes S^2Q$ is injective in $\bT$. We now repeat the procedure recursively to complete the inductive argument.
\end{proof}



Since it will be our goal to study the category ${}_I\bT$ along the same lines as $\bT$, we next turn to simple objects therein.

\begin{theorem}\label{th:itsimp}
  The simple objects in ${}_I\bT$ are (up to isomorphism) precisely the free $I$-modules $I\otimes S$ for simples $S\in \bT$. For each of them, the endomorphism algebra in ${}_I\bT$ is $\bC$. 
\end{theorem}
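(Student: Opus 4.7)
The proof proceeds in three stages: (i) $I\otimes S$ is simple in ${}_I\bT$ for every simple $S\in\bT$; (ii) every simple of ${}_I\bT$ arises in this way; (iii) $\End_{{}_I\bT}(I\otimes S)=\bC$.

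The crux is (i), which rests on the claim that the socle of $I\otimes S$ in $\bT$ equals $\bC\otimes S\cong S$. \Cref{pr.soc} identifies $L_s=\operatorname{soc}_\bT(J_s\otimes I)$ via an essential inclusion, and I would descend to each simple summand: decomposing $L_s$ into its isotypic components $L_{\lambda,\mu,\nu,\pi}^{\oplus m_{\lambda,\mu,\nu,\pi}}$ produces a compatible direct-sum decomposition of $J_s\otimes I$, forcing $\operatorname{soc}_\bT(L_{\lambda,\mu,\nu,\pi}\otimes I)=L_{\lambda,\mu,\nu,\pi}$. Since $\bT$ is semi-artinian (a consequence of \Cref{pr.is-ordered}), every nonzero $\fg$-submodule of $I\otimes S$ meets the socle and hence contains $S=\bC\otimes S$. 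A nonzero subobject $N\subseteq I\otimes S$ in ${}_I\bT$ therefore contains $S$ as a $\fg$-module, and $I$-stability upgrades this to $N\supseteq I\cdot(\bC\otimes S)=I\otimes S$, so $N=I\otimes S$.

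For (ii), given simple $M\in{}_I\bT$ I would pick any simple $\fg$-submodule $S\hookrightarrow\cat{forget}(M)$ (available by semi-artinianity); the adjunction \Cref{eq:tit} turns this inclusion into a nonzero morphism $\phi\colon I\otimes S\to M$ in ${}_I\bT$, which must be an isomorphism since by (i) and the hypothesis on $M$ both source and target are simple. For (iii), the same adjunction gives $\End_{{}_I\bT}(I\otimes S)\cong\Hom_\bT(S,I\otimes S)$; since $S$ is simple, every morphism $S\to I\otimes S$ factors through the socle, which by (i) is $S$, so the conclusion follows from $\End_\bT(S)=\bC$ via \Cref{th:scalaralgs}.

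The most delicate point is the socle computation in (i): \Cref{pr.soc} is phrased for the semisimple module $L_s$, and a direct-sum/idempotent argument is needed to isolate each simple summand $L_{\lambda,\mu,\nu,\pi}$. The remainder of the proof is then largely formal from the preceding material.
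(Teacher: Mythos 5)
Your proof is correct and takes essentially the same approach as the paper. The socle computation you flag as the delicate point is in fact already packaged into \Cref{thm:hulls} — since $I\otimes S \subset I\otimes J_s = I_s$ is an inclusion into the injective hull of $S$, the essentiality of $S\subset I\otimes S$ follows simply by restriction, which is what the paper invokes directly — and the remaining steps (ii) and (iii) via the adjunction \Cref{eq:tit} match the paper's argument.
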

\begin{proof}
  We first prove that $I\otimes S$ is simple in ${}_I\bT$. The simple objects of $\bT$ are precisely the various modules $L_{\lambda,\mu,\nu,\pi}$ of \Cref{eq:s}, and according to \Cref{thm:hulls} the injective hull $S\subset I_S$ contains $I\otimes S$ ($I_S$ exists because ${}_I \bT$ is a Grothendieck category). Since
  \begin{equation*}
    S\cong \bC\otimes S\subset I\otimes S
  \end{equation*}
  is essential in $I_S$, it is also essential in $I\otimes S$. It follows that any non-zero subobject
  \begin{equation*}
    T\subset I\otimes S
  \end{equation*}
  in ${}_I\bT$ contains $S$ and hence the $I$-module $I\otimes S$ it generates, so $T=I\otimes S$. This concludes the proof of the claim that all $I\otimes S$ are simple.

  To see that
  \begin{equation}\label{eq:isomendiotimess}
    \End_{{}_I\bT}(I\otimes S)\cong \bC,
  \end{equation}
  note first that by the adjunction \Cref{eq:tit} we have
  \begin{equation*}
    \End_{{}_I\bT}(I\otimes S) \cong \Hom_{\bT}(S, I\otimes S). 
  \end{equation*}
  The claim that this must be $\bC$ follows from the fact that $I\otimes S$ is the injective hull of $S$ (\Cref{thm:hulls}) and hence
  \begin{itemize}
  \item[$-$] every morphism $S\to I\otimes S$ in $\bT$ factors through the socle $S\subset I\otimes S$, and
  \item[$-$] $\End_{\bT} S\cong \bC$. 
  \end{itemize}
Since $I\otimes S$ is an injective hull of $S$ in $\bT$, every morphism $S\rightarrow I\otimes S$ factors through the socle $S\subset I\otimes S$, the isomorphism $\End_{\bT} S=\bC$ implies the existence of an isomorphism (\ref{eq:isomendiotimess}).

As for the fact that these are, up to isomorphism, all irreducible objects in ${}_I\bT$, consider such an object $T$ and note that it must contain some simple $S\in \bT$. Hence $T$ must be a quotient of the (irreducible!) free object $I\otimes S\in {}_I\bT$. Consequently, we have $T\cong I\otimes S$ as desired.
\end{proof}

Since $I$ is a {\it commutative} algebra in $\bT$, the category ${}_I\bT$ of internal modules has a natural symmetric monoidal structure with $I$ as the unit object and $\otimes_I$ as the tensor product. Whenever we refer to ${}_I\bT$ as a tensor category, this will be the structure we consider. 

\subsection{The category ${}_I\bT$}\label{subse:modsord}

We are now ready to apply to ${}_I\bT$ the same treatment we subjected $\bT$ to. We work with precisely the same poset $(\cP,\preceq)$ of quadruples $(l,m,n,p)$ of non-negative integers with the ordering described in \Cref{eq:ord}, and the corresponding objects $I_s=I\otimes J_s\in {}_I\bT$ for $s\in \cP$, as in equations \Cref{eq:s} to \Cref{eq:13}.  

We will similarly consider the (simple, by \Cref{th:itsimp}) objects of ${}_I\bT$
\begin{equation}\label{eq:deft}
  T_{\lambda,\mu,\nu,\pi}:=I\otimes L_{\lambda,\mu,\nu,\pi}
\end{equation}
and the semisimple objects $T_{l,m,n,p}:=I\otimes L_{l,m,n,p}$, 
 that are direct sums of the various $T_{\lambda,\mu,\nu,\pi}$. 

We now have the following analogue of \Cref{pr.is-ordered}.

\begin{proposition}\label{pr.itis-ordered}
${}_I\bT$ is an ordered Grothendieck category in the sense of \Cref{def.ord-gr}.   
\end{proposition}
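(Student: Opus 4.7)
The plan is to verify the six conditions (a)--(f) of \Cref{def.ord-gr} for ${}_I\bT$ with the poset $(\cP,\preceq)$ of quadruples $(l,m,n,p)$ introduced in \Cref{subse:ord} and distinguished objects $X_s:=I_s = I\otimes J_s$, paralleling \Cref{pr.is-ordered} and transporting each condition from $\bT$ via the adjunction \Cref{eq:tit}. Two structural inputs make this feasible: \Cref{th:itsimp} identifies the simple objects of ${}_I\bT$ as the free modules $T_{\lambda,\mu,\nu,\pi}=I\otimes L_{\lambda,\mu,\nu,\pi}$, and the functor $I\otimes\bullet:\bT\to {}_I\bT$ is exact (ordinary $\bC$-linear tensoring), so it turns a $\bT$-composition series of $J_s$ into a ${}_I\bT$-composition series of $I_s$ with subquotients of the form $T_{\lambda,\mu,\nu,\pi}$.

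Conditions (a), (b), (c) and (e) transfer quickly. For (a), a non-zero quotient $N$ of $I_s$ in ${}_I\bT$ is a non-zero quotient in $\bT$ under $\cat{forget}$, so \Cref{pr.is-ordered} produces a simple $\bT$-subobject $L\subset N$; then $I\cdot L\cong I\otimes L$ is a simple ${}_I\bT$-subobject of $N$ by \Cref{th:itsimp}. For (b), the surjective counit $I\otimes\cat{forget}(M)\twoheadrightarrow M$ of \Cref{eq:tit}, combined with a subquotient presentation of $\cat{forget}(M)$ in $\bT$ by $\bigoplus J_{s_i}$ and the exactness of $I\otimes\bullet$, presents $M$ as a subquotient of $\bigoplus I_{s_i}$ in ${}_I\bT$. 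For (c), the computation $\mathrm{soc}_{{}_I\bT}(I_s) = I\otimes L_s = T_s$ splits as a sum of mutually non-isomorphic simples $T_{\lambda,\mu,\nu,\pi}$, and these exhaust the simples of ${}_I\bT$ by \Cref{th:itsimp}. Condition (e) is immediate from $I_s=\bigoplus_{|\lambda|=l,\dots} I_{\lambda,\mu,\nu,\pi}$, each summand having simple socle $T_{\lambda,\mu,\nu,\pi}$.

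For (d), exactness of $I\otimes\bullet$ shows that the simple ${}_I\bT$-constituents of $I_s=I\otimes J_s$ are the $T_r = I\otimes L_r$ with $L_r$ ranging over the $\bT$-constituents of $J_s$. Any $T_r$ outside the socle $T_s$ arises from an $L_r$ outside $L_s$; by (d) for $\bT$, there is $t\prec s$ with $L_r\subset \mathrm{soc}_\bT(I_t)$, whence $T_r\subset \mathrm{soc}_{{}_I\bT}(I_t)$.

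The main obstacle is (f), which demands a family of \emph{$I$-linear} morphisms $I_s\to I_t$ with the prescribed joint kernel. The family $\Sigma_s$ from \Cref{pr.is-ordered} is not $I$-linear in general: the maps $\id_{J_s}\otimes\psi_0$ fail $I$-linearity because the partial comultiplication on $S^\bullet Q$ underlying $\psi_0$ is not an $I$-module map. The plan is to use the adjunction \Cref{eq:tit} to parametrize $I$-linear maps $I_s\to I_t$ by $\bT$-maps $J_s\to I\otimes J_t$, and to take the family of such maps obtained by composing the $\theta$-type morphisms $J_s\to J_{s'}\hookrightarrow I\otimes J_{s'}$ from \Cref{pr.is-ordered} with applications of the pairing $V^*\otimes V_*^*\to Q\subset I$ of \Cref{eq:partpair} to pairs of $V^*$ and $V_*^*$ tensorands in $J_s$ (feeding the result into the $I$-factor of the codomain). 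Using exactness of $I\otimes\bullet$ and the correspondence between ${}_I\bT$-constituents of $I_s$ and $\bT$-constituents of $J_s$ established in the treatment of (d), the joint kernel computation reduces to the analogous statement for $J_s$ in $\bT$, verified directly as in the proof of \Cref{pr.is-ordered}.
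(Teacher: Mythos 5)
Your proposal is correct and follows essentially the same route as the paper: you transfer conditions (a)--(e) to ${}_I\bT$ via the adjunction \Cref{eq:tit} and exactness of $I\otimes\bullet$, and for (f) you correctly identify that the obstruction is the failure of $I$-linearity of $\id_{J_s}\otimes\psi_0$, replacing it with the $I$-linearized pairing obtained by adjunction from $V^*\otimes V_*^*\to Q\subset I$ -- which is exactly the paper's modified building block \Cref{eq:realpair}. Your write-up is in fact more explicit than the paper's terse proof on conditions (a)--(e); the one place where both you and the paper wave hands is the final joint-kernel check, which neither spells out.
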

\begin{proof}
  Taking as above the objects $X_s$ to be our $I_s$ (this time regarded as objects in ${}_I\bT$ rather than just $\bT$), the argument proceeds much as in the proof of \Cref{pr.is-ordered} with a small difference in how we define the morphisms $I_s\to I_t$, $t\prec s$ in how we define the morphism from \Cref{def.ord-gr}, (\labelcref{item:9}).

  Once again, said morphisms will be tensor products and compositions of a few building blocks:
  \begin{itemize}
  \item[$-$] projecting one of the tensorands $V^*$ of $I_s=J_s\otimes I$ onto $W_*$;
  \item[$-$] the dual analogue, $V^*_*\to W$;
  \item[$-$] the ``pairing''
    \begin{equation}\label{eq:realpair}
      I_{0,1,0,0}\otimes_I I_{0,0,1,0} = (I\otimes V^*)\otimes_{I} (I\otimes V_*^*)\cong I\otimes V^*\otimes V_*^*\to I
    \end{equation}
    obtained via the adjunction \Cref{eq:tit} from the composition
    \begin{equation*}
      V^*\otimes V_*^*\to Q\subset I
    \end{equation*}
    in \Cref{eq:partpair}. 
  \end{itemize}
  Everything else goes through as sketched in the proof of \Cref{pr.is-ordered}. 
\end{proof}

The difference to $\bT$ is that now the free $I$-modules generated by the full duals $V^*$ and $V_*^*$ admit the pairing \Cref{eq:realpair} valued in the {\it unit object} $I$ of the category ${}_I\bT$ under consideration.

We also have an $I$-module version of \Cref{thm:hulls}. 

\begin{theorem}\label{thm:ihulls}
    For every quadruple $(\lambda,\mu,\nu,\pi)$ of Young diagrams the inclusion
  \begin{equation*}
    T_{\lambda,\mu,\nu,\pi} = I\otimes L_{\lambda,\mu,\nu,\pi} \subseteq I_{\lambda,\mu,\nu,\pi} = I\otimes J_{\lambda,\mu,\nu,\pi}
  \end{equation*}
  obtained by applying the functor $I\otimes \bullet$ to the inclusion
  \begin{equation*}
    L_{\lambda,\mu,\nu,\pi} \subseteq J_{\lambda,\mu,\nu,\pi}
  \end{equation*}
  is an injective hull in ${}_I\bT$.
  \qedhere
\end{theorem}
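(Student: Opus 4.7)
The proof splits naturally into two assertions: (i) that the inclusion $T_{\lambda,\mu,\nu,\pi}\subseteq I_{\lambda,\mu,\nu,\pi}$ is essential in ${}_I\bT$, and (ii) that $I_{\lambda,\mu,\nu,\pi}$ is injective in ${}_I\bT$. Combined, these deliver the injective-hull claim.

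For (i), I would observe that every $I$-submodule $N$ of $I_{\lambda,\mu,\nu,\pi}$ is in particular a $\bT$-subobject. By \Cref{pr.soc} the inclusion $L_{\lambda,\mu,\nu,\pi}\subseteq I_{\lambda,\mu,\nu,\pi}$ is already essential in $\bT$, so $N\cap L_{\lambda,\mu,\nu,\pi}\neq 0$ for any nonzero such $N$. Since $L_{\lambda,\mu,\nu,\pi}\cong \bC\otimes L_{\lambda,\mu,\nu,\pi}$ sits inside $T_{\lambda,\mu,\nu,\pi}=I\otimes L_{\lambda,\mu,\nu,\pi}$ as the intersection with the subsocle of $I$, this forces $N\cap T_{\lambda,\mu,\nu,\pi}\neq 0$, as required.

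For (ii), I would appeal to \Cref{pr.itis-ordered} together with the general proposition following \Cref{def.ord-gr}: the indecomposable injectives of an ordered Grothendieck category are, up to isomorphism, precisely the indecomposable summands of the distinguished objects $X_s=I_s=I\otimes J_s$. The Young-projector decomposition $J_s=\bigoplus J_{\lambda,\mu,\nu,\pi}$ is realized by $\fg$-equivariant idempotents; tensoring with $\id_I$ yields $I$-linear idempotents in ${}_I\bT$, producing a decomposition $I_s=\bigoplus I_{\lambda,\mu,\nu,\pi}$ in ${}_I\bT$. Each summand has socle $T_{\lambda,\mu,\nu,\pi}$ by step (i), and $T_{\lambda,\mu,\nu,\pi}$ is simple in ${}_I\bT$ by \Cref{th:itsimp}; hence each $I_{\lambda,\mu,\nu,\pi}$ is indecomposable, so the cited proposition identifies it as an indecomposable injective.

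The one subtlety worth flagging is the compatibility of the Young-projector decomposition with the $I$-action, but this is automatic: the projectors are $\fg$-equivariant endomorphisms of $J_s$, so tensoring with $\id_I$ produces endomorphisms in ${}_I\bT$, because $I$ acts on $I\otimes J_s$ through its left-regular action on the $I$-tensorand (which commutes with $\fg$-equivariant maps on $J_s$). Thus the summands $I_{\lambda,\mu,\nu,\pi}$ are genuine $I$-submodules, and injectivity plus essentiality of the simple subobject $T_{\lambda,\mu,\nu,\pi}$ characterizes $I_{\lambda,\mu,\nu,\pi}$ as its injective hull in ${}_I\bT$.
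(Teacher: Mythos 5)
Your proof is correct, and it follows essentially the route the paper intends: \Cref{pr.itis-ordered} identifies ${}_I\bT$ as an ordered Grothendieck category whose distinguished objects are the $I_s$, the general proposition about ordered Grothendieck categories (\cite[Proposition~2.5]{us1}) identifies their indecomposable summands $I_{\lambda,\mu,\nu,\pi}$ as the indecomposable injectives, and \Cref{pr.soc} together with \Cref{th:itsimp} pins down the simple socle $T_{\lambda,\mu,\nu,\pi}$. Your step~(i) — deducing essentiality over $I$ from essentiality over $\fg$ because an $I$-submodule is a fortiori a $\bT$-subobject and $\bC\otimes L_{\lambda,\mu,\nu,\pi}\subseteq I\otimes L_{\lambda,\mu,\nu,\pi}$ — is a correct and cleanly stated version of the point the paper leaves implicit, and your flagged subtlety about the $I$-linearity of the Young-projector decomposition is already absorbed into the proof of \Cref{pr.itis-ordered}.
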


Just as $\bT$, the category ${}_I\bT$ can be realized as comodules over a coalgebra (see \Cref{th:koszcoalg}). As in that previous result, we denote by ${}_I\bT_{fin}\subset {}_I\bT$ the full subcategory of finite-length objects. Note that the indecomposable injectives
\begin{equation*}
I_{\lambda,\mu,\nu,\pi} = I\otimes J_{\lambda,\mu,\nu,\pi}  \in {}_I\bT
\end{equation*}
have finite length: $J_{\lambda,\mu,\nu,\pi}$ have finite filtrations with subquotients simple in $\bT$, and according to \Cref{th:itsimp} tensoring these simple objects by $I$ produces simples in ${}_I\bT$.

\begin{theorem}\label{th:ikoszcoalg}
  The Grothendieck category ${}_I{\bT}$ is equivalent to the category $\cM^D$ of comodules over a coalgebra $D$, with ${}_I\bT_{fin}\simeq \cM^D_{fin}$. Furthermore, the coalgebra $D$ is left semiperfect in the sense of \Cref{def:perf}. 
\end{theorem}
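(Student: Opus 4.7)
The plan is to deduce the first assertion from Takeuchi's recognition theorem (\Cref{thm:tak}) applied to ${}_I\bT$, and then read off left semiperfectness from the finite length of the indecomposable injectives listed just before the statement.

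To invoke \Cref{thm:tak}, I need to verify two hypotheses for ${}_I\bT$: that it is locally finite (\Cref{def:loc-fin}) and that every simple object has a finite-dimensional endomorphism ring over $\bC$. For the second condition, \Cref{th:itsimp} says that the simples of ${}_I\bT$ are precisely the $T_{\lambda,\mu,\nu,\pi} = I\otimes L_{\lambda,\mu,\nu,\pi}$ and that each has endomorphism algebra $\bC$, so this is immediate. For local finiteness, I would use the family $\{I_{\lambda,\mu,\nu,\pi}\}$ of indecomposable injectives: by \Cref{pr.itis-ordered} the category ${}_I\bT$ is ordered Grothendieck with $X_s = I_s = I\otimes J_s$, so by \labelcref{item:5} of \Cref{def.ord-gr} every object of ${}_I\bT$ is a subquotient of a direct sum of these $I_s$ (equivalently, of their indecomposable summands $I_{\lambda,\mu,\nu,\pi}$). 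It therefore remains to observe that each $I_{\lambda,\mu,\nu,\pi}$ has finite length in ${}_I\bT$: this is precisely the point made in the paragraph preceding the theorem, namely that $J_{\lambda,\mu,\nu,\pi}$ has a finite filtration by objects simple in $\bT$, and \Cref{th:itsimp} says tensoring those simples with $I$ produces simples in ${}_I\bT$. Applying \Cref{thm:tak} then yields a coalgebra $D$ with ${}_I\bT \simeq \cM^D$ and ${}_I\bT_{fin}\simeq \cM^D_{fin}$ (where ${}_I\bT_{fin}$ is identified with the finite-length part).

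For left semiperfectness I would use the first characterization in \Cref{def:perf}: every indecomposable injective right $D$-comodule is finite-dimensional. Under the equivalence ${}_I\bT \simeq \cM^D$, the indecomposable injectives of ${}_I\bT$ correspond to the indecomposable injectives of $\cM^D$. By \Cref{thm:ihulls} (together with the proposition following \Cref{def.ord-gr}, which identifies indecomposable injectives in an ordered Grothendieck category with indecomposable summands of the $X_s$), the indecomposable injectives in ${}_I\bT$ are, up to isomorphism, precisely the objects $I_{\lambda,\mu,\nu,\pi}$. By the length argument already used, these are of finite length in ${}_I\bT$, and finite length on the categorical side translates to finite dimensionality on the comodule side under the identification ${}_I\bT_{fin}\simeq \cM^D_{fin}$ provided by \Cref{thm:tak}. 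Hence every indecomposable injective $D$-comodule is finite-dimensional, so $D$ is left semiperfect.

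The only real subtlety — and what I would treat most carefully — is the assertion that the $I_{\lambda,\mu,\nu,\pi}$ are finite-length in ${}_I\bT$. Although $I$ itself has infinite length as an object of $\bT$, the point is that in ${}_I\bT$ the object $I = I\otimes \bC$ is simple (being the monoidal unit and agreeing with $I\otimes L_{\emptyset,\emptyset,\emptyset,\emptyset}$), and more generally the tensor $I\otimes J_{\lambda,\mu,\nu,\pi}$ inherits only the finite filtration of $J_{\lambda,\mu,\nu,\pi}$ as a filtration in ${}_I\bT$, each subquotient being simple in ${}_I\bT$ by \Cref{th:itsimp}. Everything else in the proof is formal and follows from assembling \Cref{thm:tak}, \Cref{th:itsimp}, \Cref{pr.itis-ordered}, and \Cref{thm:ihulls}.
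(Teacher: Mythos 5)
Your proof is correct and follows the paper's own route: invoke Takeuchi's recognition theorem (\Cref{thm:tak}) by checking local finiteness via the finite-length indecomposable injectives $I_{\lambda,\mu,\nu,\pi}$ together with the scalar endomorphism rings of simples from \Cref{th:itsimp}, and then read off left semiperfectness from the finite length of those same injectives via \Cref{def:perf}. You have simply unwound the paper's terse ``largely parallel to \Cref{th:koszcoalg}'' into its explicit steps, correctly noting the one simplification (no truncation needed, since here the $I_{\lambda,\mu,\nu,\pi}$ are already finite-length).
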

\begin{proof}
  The argument is largely parallel to that underpinning \Cref{th:koszcoalg}, via \Cref{thm:tak} (minus Koszulity, which we have not yet addressed for $I$-modules).

  The additional remark, that $D$ is semiperfect, follows directly from \Cref{def:perf} and the fact that, as observed above, in ${}_I\bT$ the indecomposable injectives $I_{\lambda,\mu,\nu,\pi}$ have finite length.
\end{proof}

We also need the following remark, which parallels \cite[Lemma 2.19]{us0} (the proof is virtually identical, so we omit it).

\begin{lemma}\label{le:fullonx}
  The tensor subcategory ${}_I\bT'$ of ${}_I\bT$ generated by the morphisms described in the proof of \Cref{pr.itis-ordered} is the full subcategory containing $I_{l,m,n,p}$.  \qedhere
\end{lemma}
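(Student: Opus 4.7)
The plan is to show that every morphism $\varphi\colon I_s\to I_t$ between two of our standard generators ($s,t\in\cP$) lies in the tensor subcategory ${}_I\bT'$. Granted this, the rest of the statement is automatic: ${}_I\bT'$ is closed under tensor products and compositions by definition, so it coincides with the full subcategory on objects of the form $I_{l,m,n,p}$ (and their tensor products, which are again of this form by construction).

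First I would reduce Hom computations in ${}_I\bT$ to Hom computations in $\bT$ using the free/forgetful adjunction \Cref{eq:tit}:
\[
\Hom_{{}_I\bT}(I_s,I_t)=\Hom_{{}_I\bT}(I\otimes J_s,\, I\otimes J_t)\;\cong\;\Hom_{\bT}(J_s,\, I\otimes J_t).
\]
Since any morphism on the right extends uniquely to an $I$-linear morphism by tensoring with $\id_I$ and composing with the $I$-action on the target, it suffices to exhibit every $f\colon J_s\to I\otimes J_t$ in $\bT$ as a composition and/or sum of the three building blocks enumerated in the proof of \Cref{pr.itis-ordered} (the projections $V^*\twoheadrightarrow W_*$, $V_*^*\twoheadrightarrow W$, and the pairing \Cref{eq:realpair}), applied to tensorands of $J_s$ and $I$.

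Next I would use that $I\otimes J_t$ is an injective hull of $T_t$ (\Cref{thm:ihulls}) together with the filtration $I=\varinjlim_k S^kQ$ from \Cref{eq:12}. This lets me restrict attention to morphisms $f\colon J_s\to I\otimes J_t$ whose image lies in $S^kQ\otimes J_t$ for some $k$, and to argue by induction on $k$. For $k=0$ the statement is exactly the analogous result for $\bT^4_{\fg}$, which by the argument in \cite[Lemma 2.19]{us0} says that $\Hom_{\bT}(J_s,J_t)$ is spanned by compositions of the first two building blocks together with the trace/evaluation morphism $V_*\otimes V\to\bC$; the latter is precisely the $k=0$ component of the pairing \Cref{eq:realpair}, i.e.\ the ``scalar part" of $V^*\otimes V_*^*\to Q\subset I$. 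For the induction step one uses \Cref{pr:ss}, which splits $I\otimes S^{k}Q$ off its predecessor, together with the plethysm decomposition of \Cref{pr.plth}\ref{item:1} applied to $S^k F = S^k(W_*\otimes W)$ to decompose $\Hom(J_s,S^kQ\otimes J_t)$ into pieces controlled by Schur functors. Each such piece has a canonical realization: by \Cref{pr:gpendoalgs} and \Cref{th:scalaralgs} the relevant $\Hom$ spaces between simple constituents $L_{\lambda,\mu,\nu,\pi}$ are one-dimensional, and the Young-symmetrizer decomposition of the endomorphism algebras $\bC[S_l\times S_p]$ acting on the outer tensorands shows that each matrix coefficient is obtained by a tensor product of (i)~a symmetric-group action on the $W_*^{\otimes l}$ and $W^{\otimes p}$ tensorands, (ii)~quotient maps $V^*\twoheadrightarrow W_*$ and $V_*^*\twoheadrightarrow W$ applied to chosen tensorands of $J_s$, and (iii)~a pairing \Cref{eq:realpair} on a pair of $V^*$, $V_*^*$ tensorands. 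This is exactly the recipe for a morphism in ${}_I\bT'$.

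The main obstacle will be the bookkeeping in the inductive step: one has to match, for each $k$, the summands of $J_s$ (labelled by tuples of Young diagrams contributing to the source) against the summands of $S^kF\otimes J_t$ (labelled via Cauchy's identity by a pair $(\lambda,\lambda)$ together with the diagrams in $t$), and realize each non-zero matching by an explicit composition of the three building blocks above. The endomorphism-algebra computation of \Cref{pr:gpendoalgs}, the sharpness of the defect in \Cref{thm:right-def} (which rules out ``extra" morphisms between non-comparable indices), and the injectivity of each $I\otimes S^kQ$ in $\bT$ established in \Cref{pr:ss} are exactly what is needed to make this matching rigid, so no morphism is missed and none appears that isn't already generated. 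This is the only substantive point beyond what is already in \cite[Lemma 2.19]{us0}; once it is checked the rest of the proof reproduces that reference verbatim.
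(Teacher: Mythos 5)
The paper deliberately omits this proof and simply cites \cite[Lemma~2.19]{us0}, declaring the argument ``virtually identical.'' Your plan via the adjunction \Cref{eq:tit} and an induction on the filtration degree $k$ of $I=\varinjlim_k S^kQ$ is a reasonable framework, but the base case as written is incorrect and the inductive step is not actually carried out.

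The base case: you claim $\Hom_{\bT}(J_s,J_t)$ is spanned by compositions of the two projections together with the evaluation $V_*\otimes V\to\bC$, which you identify as the ``$k=0$ component'' of the pairing \Cref{eq:realpair}. Both halves fail. The object $J_s=W_*^{\otimes l}\otimes(V^*)^{\otimes m}\otimes(V_*^*)^{\otimes n}\otimes W^{\otimes p}$ has no $V$ or $V_*$ tensorands, so the evaluation $V_*\otimes V\to\bC$ cannot act on it, and there is no $\fg$-morphism $V^*\otimes V_*^*\to\bC$ at all: the surjection $V^*\otimes V_*^*\to Q$ of \Cref{eq:partpair} has $\bC$ as a non-split \emph{submodule} of its target (\Cref{pr.ann}), not as a quotient. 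Moreover that surjection has image $Q=S^1Q$, so under your filtration it is a level-$1$ morphism, not a level-$0$ one; the evaluation $V_*\otimes V\to\bC$ is the restriction of the surjection to the submodule $V_*\otimes V\subset V^*\otimes V_*^*$, not a filtration component of its target. The genuine $k=0$ picture is therefore simply: compositions of the projections $V^*\twoheadrightarrow W_*$, $V_*^*\twoheadrightarrow W$, and permutations — a different (and smaller) statement than the one you appeal to, and one that is not what \cite[Lemma~2.19]{us0} addresses (the objects there carry $V$ and $V_*$ directly).

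The inductive step is then left as a gesture. Citing plethysm, the computations of \Cref{pr:gpendoalgs}, and the sharpness of \Cref{thm:right-def} is not the same as proving spanning: sharpness controls when $\mathrm{Ext}$ groups between simples vanish, not whether a specified collection of composite morphisms spans a nonzero Hom space. What the lemma requires is exactly the verification you defer — that, for each simple constituent contributing to $\Hom_{\bT}(J_s, S^kQ\otimes J_t)$, an explicit composition of the three generating morphisms realizes it, and that together these composites span. That is the actual content of the lemma, and it remains undone in the sketch.
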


We next turn to the Koszulity of ${}_I\bT$. In keeping with the theme, the argument will be very similar to what we saw in proving \Cref{thm:right-def,thm:4kosz}. We collect all of the statements together as follows.

\begin{theorem}\label{th:ikosz}
  The Grothendieck category ${}_I\bT$ is sharp in the sense of \Cref{def.shrp}: for any two simples $S,T\in {\bT}$ we have
  \begin{equation*}
    \mathrm{Ext}^q(S,T)\ne 0\quad \Rightarrow\quad  d(s,t)=q.
  \end{equation*}
  In particular, the ordered Grothendieck category ${}_I\bT$ is Koszul in the sense of \Cref{def.ksz}.
\end{theorem}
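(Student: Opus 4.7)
The plan is to mimic the three-stage argument used for \Cref{thm:right-def}, with a notable simplification at the base case. First I would observe that every purely thick simple object $T_{\lambda,\emptyset,\emptyset,\pi}$ of ${}_I\bT$ is already injective: when the middle diagrams are empty we have $J_{\lambda,\emptyset,\emptyset,\pi}=L_{\lambda,\emptyset,\emptyset,\pi}$, so \Cref{thm:ihulls} gives $I_{\lambda,\emptyset,\emptyset,\pi}=T_{\lambda,\emptyset,\emptyset,\pi}$. Hence $\operatorname{Ext}^q_{{}_I\bT}(S,T_{\lambda,\emptyset,\emptyset,\pi})=0$ for all $q\ge 1$, and the sharpness conclusion is vacuous in the purely thick case. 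This replaces the role of \Cref{pr.thk} in the proof of \Cref{thm:right-def} and avoids any explicit computation of an injective resolution of the unit object $I$ of ${}_I\bT$.

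Next I would establish an ${}_I\bT$-analog of \Cref{le.exact} by applying the exact free functor $I\otimes_{\bC}(-)\colon\bT\to{}_I\bT$ to the sequences of \Cref{le.exact}. This yields
\begin{equation*}
  0\to T^{+\ell}_{\lambda,\mu,\nu,\pi}\to (I\otimes V^*)\otimes_{I}T_{\lambda,\mu,\nu,\pi}\to I\otimes H\to 0
\end{equation*}
in ${}_I\bT$ (and analogously for the $+r$ version), where $T^{+\ell}_{\lambda,\mu,\nu,\pi}:=I\otimes L^{+\ell}_{\lambda,\mu,\nu,\pi}$ collects the simples obtained by adding a box to $\mu$, and $I\otimes H$ is semisimple with summands $T_{\lambda',\mu,\nu,\pi}$ for $|\lambda'|=|\lambda|+1$ and $T_{\lambda,\mu,\nu',\pi}$ for $|\nu'|=|\nu|-1$. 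All such summands lie at defect $1$ from $T^{+\ell}_{\lambda,\mu,\nu,\pi}$ via \Cref{le.defct}, since ${}_I\bT$ carries the same poset and the same numerical defect formula as $\bT$.

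I would then carry out induction on $q+|\mu'|+|\nu'|$ with target $T=T_{\lambda',\mu',\nu',\pi'}$, following the structure of the proof of \Cref{thm:right-def}. The base case (empty $\mu'$ and $\nu'$) is handled by the purely thick observation above. For the inductive step, assume for instance that $\mu'$ is non-empty, embed $T$ as a direct summand of $T^{+\ell}_{\lambda',\beta,\nu',\pi'}$ with $|\beta|=|\mu'|-1$, and apply the long exact Ext sequence. Either $\operatorname{Ext}^{q-1}(S,I\otimes H')\neq 0$, whence the induction hypothesis together with additivity of defect along chains in our poset yields $d(S,T)=q$; or $\operatorname{Ext}^{q}(S,(I\otimes V^*)\otimes_{I}T_{\lambda',\beta,\nu',\pi'})\neq 0$, handled by tensoring an injective resolution of $T_{\lambda',\beta,\nu',\pi'}$ (available inductively) with $I\otimes V^*$ over $I$ and tracking its socle in degree $q$.

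The main technical point to verify is that tensoring an injective resolution in ${}_I\bT$ with $I\otimes V^*$ over $I$ again yields an injective resolution. This reduces to the identification $(I\otimes V^*)\otimes_{I}I_{\lambda,\mu,\nu,\pi}\cong I\otimes V^*\otimes J_{\lambda,\mu,\nu,\pi}$ together with the Pieri-type decomposition of $V^*\otimes J_{\lambda,\mu,\nu,\pi}$ into a direct sum of objects $J_{\lambda,\mu'',\nu,\pi}$ with $|\mu''|=|\mu|+1$, which after applying $I\otimes(-)$ produces a direct sum of indecomposable injectives in ${}_I\bT$. Once sharpness is in hand, Koszulity follows immediately from \Cref{thm:kosz-gen}: the indecomposable injectives $I_{\lambda,\mu,\nu,\pi}$ in ${}_I\bT$ have finite length (as observed before \Cref{th:ikoszcoalg}), so the finite-length hypothesis on socle filtration terms is trivially satisfied.
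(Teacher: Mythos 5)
Your proposal is correct and follows essentially the same route as the paper: the paper's proof of \Cref{th:ikosz} is itself a short reduction to the argument of \Cref{thm:right-def} with \Cref{eq:18} tensored by $I$, and the accompanying remark in the paper notes exactly the simplification you found, namely that \Cref{pr.thk} becomes unnecessary because the purely thick simples $T_{\lambda,\emptyset,\emptyset,\pi}=I_{\lambda,\emptyset,\emptyset,\pi}$ are already injective in ${}_I\bT$. Your elaboration of the inductive step and the check that $(I\otimes V^*)\otimes_I(-)$ preserves injectives via the Pieri decomposition are precisely what the paper leaves implicit when it appeals to the proof of \Cref{thm:right-def}.
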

\begin{proof}
The last claim follows from sharpness by \Cref{thm:kosz-gen}, so we focus on proving the sharpness claim. In turn, the latter follows as in the proof of \Cref{thm:right-def}, with the exact sequence \Cref{eq:18} replaced by its analogue, obtained by simply tensoring it with $I$.   
\end{proof}

\begin{remark}
  Note that in the present setting the proof of Koszulity is in fact simpler than in \Cref{subse:kosz}: we do not need a version of \Cref{pr.thk}, since for purely thick simple objects $L_{\lambda,\emptyset,\emptyset,\pi}$ the corresponding simple object of ${}_{I}\bT$
  \begin{equation*}
    T_{\lambda,\emptyset,\emptyset,\pi} = I\otimes L_{\lambda,\emptyset,\emptyset,\pi}
  \end{equation*}
  is injective. 
\end{remark}

As a consequence, we can supplement \Cref{th:ikoszcoalg}, fully bringing it in line with \Cref{th:koszcoalg}. 

\begin{corollary}\label{cor.dgradedkoszul}
The coalgebra $D$ in \Cref{th:ikoszcoalg} can be chosen graded and Koszul. 
  \qedhere
\end{corollary}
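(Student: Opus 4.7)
The plan is to imitate the argument behind \Cref{th:koszcoalg} (following \cite[Corollary 3.19]{us0} and \cite[Corollary 4.25 (d)]{us1}). By \Cref{th:ikoszcoalg} we already have ${}_I\bT \simeq \cM^D$ for some coalgebra $D$, and by \Cref{th:ikosz} the ordered Grothendieck category ${}_I\bT$ is both sharp and Koszul. The task is to repackage $D$ as a graded Koszul coalgebra.

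First, invoke \Cref{th:itsimp} to replace $D$ by its basic Morita-equivalent form: every simple object in ${}_I\bT$ has endomorphism algebra $\bC$, so $D_0$ may be taken as $\bigoplus_{[S]} \bC e_S$ indexed by isomorphism classes of simples. Then set
\begin{equation*}
  D_n := \bigoplus_{S,T} \mathrm{Ext}^n_{{}_I\bT}(T,S)^*,
\end{equation*}
the sum over pairs of simples and the star denoting linear dual, with comultiplication obtained by dualizing Yoneda composition. The sharpness part of \Cref{th:ikosz} guarantees that only pairs with $d(s,t) = n$ contribute to $D_n$, and the finite length of each socle layer of an indecomposable injective $I_{\lambda,\mu,\nu,\pi}$ (as in the proof of \Cref{pr.itis-ordered}) ensures that each homogeneous component decomposes as a direct sum of finite-dimensional pieces. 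Compatibility of the grading with the comultiplication is the additivity of defect along chains $s \succ r \succ t$, which follows from the explicit formula in \Cref{le.defct}.

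Next, the Koszul property of $D$ in the classical sense of \cite[\S 2.1]{PP} is the dual reformulation of the Yoneda-surjectivity condition in \Cref{def.ksz}, and this has already been verified for ${}_I\bT$ in \Cref{th:ikosz}: concretely, surjectivity of the Yoneda map on $\mathrm{Ext}^1$'s is the statement that $D$ is generated in degree $1$ over $D_0$ with quadratic relations, while the higher-degree surjectivity ensures the corresponding dual Koszul complex is exact.

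Finally, one matches $\cM^D$ with ${}_I\bT$ via \Cref{thm:tak}: both categories are locally finite with scalar endomorphism algebras on simples (by \Cref{th:itsimp}), and the coradical filtration of $D$ matches the socle filtrations of the indecomposable injectives $I_{\lambda,\mu,\nu,\pi}$ of \Cref{thm:ihulls}. The main obstacle is showing that the grading just constructed still recovers the correct comodule category; this is a bookkeeping exercise, relying on the fact that the injectives $I_{\lambda,\mu,\nu,\pi}$ are the injective hulls of the simples $T_{\lambda,\mu,\nu,\pi}$ and that sharpness forces the socle filtration of $I_{\lambda,\mu,\nu,\pi}$ to coincide with the filtration induced by the defect grading on $D$.
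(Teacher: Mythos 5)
There is a genuine error in the construction. You define $D_n := \bigoplus_{S,T}\mathrm{Ext}^n_{{}_I\bT}(T,S)^*$, i.e.\ $D$ is the graded dual of the Ext-algebra of the simple objects. But the Ext-algebra of the simples is the \emph{Koszul dual} of the coalgebra that realizes ${}_I\bT$ as a comodule category, not that coalgebra itself, and the two are in general not Morita equivalent. Concretely, if $\cC = k[x]/(x^2)\text{-mod}$ with its natural $2$-dimensional coalgebra $D'=(k[x]/(x^2))^*$, then $\mathrm{Ext}^n(k,k)\cong k$ for all $n\ge 0$, so your prescription produces an \emph{infinite}-dimensional graded coalgebra (the divided-power coalgebra dual to $k[y]$) whose comodule category has global dimension $1$; it is not equivalent to $\cM^{D'}$. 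The step you dismiss as ``a bookkeeping exercise'' --- that the graded coalgebra you built still recovers ${}_I\bT$ --- is in fact false for this construction, and it is precisely the missing content. The correct object to grade, which is what the paper does in the discussion immediately following the corollary, is the Hom-algebra of the indecomposable injectives,
\begin{equation*}
  \mathcal{A}_d := \bigoplus_{d(t,s)=d} \Hom_{{}_I\bT}(I_s, I_t),
\end{equation*}
with grading by defect (this \emph{is} a grading thanks to the additivity in \Cref{le.defct}), and then $D:=\mathcal{A}^*$ with $D_d=\mathcal{A}_d^*$. The remaining ingredients you invoke --- sharpness and Yoneda-surjectivity from \Cref{th:ikosz}, finite length of the indecomposable injectives (semiperfectness from \Cref{th:ikoszcoalg}), scalar endomorphism rings from \Cref{th:itsimp}, and \Cref{thm:tak} --- are all the right ones; it is the substitution of $\mathrm{Ext}(S,T)$ for $\Hom(I_s,I_t)$ that derails the argument.
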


We end the present subsection with description of one possible choice for the graded coalgebra $C$ in \Cref{th:koszcoalg}. This discussion parallels \cite[\S 3.4]{us0}, which in turn is analogous to \cite[\S 5]{DPS}.

Let $T$ be the tensor algebra in ${}_I\bT$ of the object
\begin{equation*}
  (W_*\otimes I)\oplus (V^*\otimes I)\oplus (V_*^*\otimes I)\oplus (W\otimes I)  = I_{1,0,0,0}\oplus I_{0,1,0,0}\oplus I_{0,0,1,0}\oplus I_{0,0,0,1}
\end{equation*}
with $T_d$ denoting its degree-$d$ component, and the non-unital algebra of endomorphisms
\begin{equation*}
  \mathcal{A}:=\bigoplus_{m,n\in \bZ_{\ge 0}}\Hom_{{}_I\bT}(T_m,T_n)\label{eq:a}=\bigoplus_{s,t\in \cP}\Hom_{{}_I\bT}(I_s, I_t).
\end{equation*}
The algebra $\mathcal{A}$ is naturally $\bZ_{\ge 0}$-graded by means of the defect introduced in \Cref{def:defect}:
\begin{equation*}
  \mathcal{A}_d:=\bigoplus_{\substack{s,t\in \cP \\ d(t,s)=d}}\Hom_{{}_I\bT}(I_s, I_t).
\end{equation*}
Finally, the coalgebra $C$ is simply the graded dual of $\mathcal{A}$, with $C_d=\mathcal{A}_d^*$.

The fact that $C$ (and hence $\mathcal{A}$) is Koszul then implies

\begin{proposition}\label{pr:isquad}
  The algebra $\mathcal{A}$ is quadratic. 
\end{proposition}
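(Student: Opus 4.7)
The plan is to deduce quadraticity of $\mathcal{A}$ directly from Koszulity of the category ${}_I\bT$ (\Cref{th:ikosz} and \Cref{cor.dgradedkoszul}) together with the classical implication that a Koszul algebra is quadratic. By construction, the coalgebra $C$ is the graded dual of $\mathcal{A}$, so after identifying gradings, Koszulity of $C$ as a graded coalgebra is equivalent to Koszulity of $\mathcal{A}$ as a graded algebra over the semisimple subalgebra
\begin{equation*}
\mathcal{A}_0 \;=\; \bigoplus_{s\in \cP}\End_{{}_I\bT}(I_s).
\end{equation*}

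Granted this, the proposition reduces to the general theorem that Koszul algebras are generated in degree one with relations concentrated in degree two; see \cite[\S 2.1]{PP}. The same strategy yields the analogous quadraticity statement for the algebra associated to $\bT^3_{\fgl^M(V,V_*)}$ in \cite[\S 3.4]{us0}, and the argument transfers without essential change to the present setting, since all the structural features used there (semisimple degree-zero part, finite-length indecomposable injectives, sharpness of the category) are in place here by \Cref{th:itsimp,th:ikoszcoalg,th:ikosz}.

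The only step requiring verification is the compatibility between the defect grading on $\mathcal{A}$ (as in \Cref{def:defect}) and the grading coming from Yoneda composition of $\mathrm{Ext}^1$'s that underlies \Cref{def.ksz}. The inequality $d(s,t) \ge q$ whenever $\mathrm{Ext}^q(S,T) \ne 0$ is \Cref{pr.def-ge}, while the converse equality is the sharpness half of \Cref{th:ikosz}. Together these identify the two gradings and realize $\mathcal{A}$ as (the Koszul incarnation of) the Ext algebra of the direct sum of the simples of ${}_I\bT$, after which quadraticity is immediate from \cite[\S 2.1]{PP}. I do not anticipate any substantive obstacle beyond this bookkeeping.
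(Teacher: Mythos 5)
Your proposal is correct and follows essentially the same route as the paper's proof: the paper also deduces quadraticity as an immediate consequence of Koszulity of the graded (co)algebra established in \Cref{cor.dgradedkoszul}, citing the standard fact that Koszul algebras are quadratic (the paper points to \cite[\S 2.3]{BGS}, you to \cite[\S 2.1]{PP}; both are fine). The extra bookkeeping you sketch concerning compatibility of the defect grading with the Ext-grading is sound but is essentially already subsumed in \Cref{th:ikosz} and the preceding construction of $\mathcal{A}$, so the paper simply takes it as read.
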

\begin{proof}
  Koszul algebras are well known to be quadratic; see e.g. \cite[\S 2.3]{BGS}.
\end{proof}

\subsection{Universality}\label{subse:univ}

We can now characterize ${}_I\bT$ as a universal category in the sense of \cite[Theorem 4.23]{us0} and \cite[Theorem 5.2]{us1}. First, note that in ${}_I\bT$ there is a pairing
\begin{equation}\label{eq:pairi}
  (I\otimes V^*)\otimes_I (I\otimes V_*^*)\cong I\otimes V^*\otimes V_*^*\to I
\end{equation}
corresponding to \Cref{eq:partpair} through the adjunction \Cref{eq:tit}. 

\begin{theorem}\label{th:sl-univ}
  Let $(\cD,\ \otimes,\ {\bf 1})$ be a ($\bC$-linear abelian) tensor category, $x\hookrightarrow x_*^*$ and $x_*\hookrightarrow x^*$ be monomorphisms in $\cD$, and
  \begin{equation*}
    p: x^*\otimes x_*^*\to {\bf 1}
  \end{equation*}
  be a morphism in $\cD$.

  \begin{enumerate}[(a)]
  \item There is a unique (up to monoidal natural isomorphism) left exact symmetric monoidal functor $F:{}_I\bT_{fin}\to \cD$ sending
    \begin{itemize}
    \item[$-$] the pairing \Cref{eq:pairi} to $p$;
    \item[$-$] the surjection $V_*^*\to W$ to $x_*^*\to x_*^*/x$;
    \item[$-$] the surjection $V^*\to W_*$ to $x^*\to x_*$. 
    \end{itemize}
  \item if $\cD$ is additionally a Grothendieck category then $F$ extends uniquely to a coproduct-preserving functor ${}_I\bT\to \cD$.
  \end{enumerate}
\end{theorem}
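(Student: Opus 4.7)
The strategy parallels \cite[Theorem 4.23]{us0} and \cite[Theorem 5.2]{us1}. The plan is to first define $F$ on the indecomposable injectives $I_s$, then on the elementary morphisms between them, verify that the resulting assignment respects all relations, and finally propagate by left exactness to all finite-length objects. For part (b), I would then take a filtered-colimit extension.

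For the first step, monoidality and the prescriptions force
\begin{equation*}
  F(I_{l,m,n,p})=(x^*/x_*)^{\otimes l}\otimes (x^*)^{\otimes m}\otimes (x_*^*)^{\otimes n}\otimes (x_*^*/x)^{\otimes p},
\end{equation*}
since each $I_{l,m,n,p}$ is an iterated $\otimes_I$-product of copies of $I\otimes W_*$, $I\otimes V^*$, $I\otimes V_*^*$ and $I\otimes W$, and these four must be sent to $x^*/x_*$, $x^*$, $x_*^*$ and $x_*^*/x$ respectively (with $F(I)=\1$ as the unit of the monoidal structure). Applying the Young projectors $c_\lambda\otimes c_\pi$ from \Cref{th:scalaralgs} via the symmetric braiding of $\cD$ then pins down $F(I_{\lambda,\mu,\nu,\pi})$, and the analogous construction on the $V_{\mu,\nu}$ factor pins down $F$ on the simple objects $T_{\lambda,\mu,\nu,\pi}$.

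For the second step, \Cref{le:fullonx} reduces the definition of $F$ on hom-sets to four families of elementary morphisms: the two surjections $V^*\twoheadrightarrow W_*$ and $V_*^*\twoheadrightarrow W$, the internal pairing \Cref{eq:pairi}, and the symmetric-group braidings. The theorem's hypotheses and the symmetric structure of $\cD$ dictate the image of each. To see the resulting assignment is well-defined I would invoke quadraticity (\Cref{pr:isquad}): every relation among compositions of the elementary maps is generated by the degree-$2$ relations indexed by defect-$2$ pairs in $\cP$. These reduce to a short list of evident compatibility diagrams -- commutativity of the two surjections with each other and with the pairing, naturality under the symmetric braidings, the $S_l\times S_p$ relations of \Cref{pr:gpendoalgs} -- each of which holds automatically in $\cD$ from the data $(x\hookrightarrow x_*^*,\ x_*\hookrightarrow x^*,\ p)$. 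Having defined $F$ coherently on the full subcategory spanned by the $I_s$, every finite-length object of ${}_I\bT$ embeds as the kernel of a morphism between indecomposable injectives $I_s$, so left exactness both forces and uniquely determines the extension of $F$ to ${}_I\bT_{fin}$. For part (b), I would use the Grothendieck hypothesis on $\cD$ and local finiteness of ${}_I\bT$ (\Cref{th:ikoszcoalg}) to extend by filtered colimits, producing an essentially unique coproduct-preserving extension.

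The main obstacle will be the verification of the quadratic compatibility relations in the second step. The delicate point is the interaction between the pairing $p$ and the subobject data $x\hookrightarrow x_*^*$, $x_*\hookrightarrow x^*$: concretely, one must check that restricted along $x_*\otimes x\hookrightarrow x^*\otimes x_*^*$ the pairing $p$ lands in $\1$ in the way dictated by the defining extension \Cref{eq:exatseqQ} of $Q$ by $\bC$. Once this compatibility is in place the remaining quadratic checks are formal, and the argument proceeds along the lines of \cite{us0,us1}.
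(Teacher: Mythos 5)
Your overall strategy matches the paper's: define $F$ on the objects $I_{l,m,n,p}$ by monoidality, reduce the morphism-level definition to a small family of elementary maps via \Cref{le:fullonx}, invoke quadraticity (\Cref{pr:isquad}) to reduce well-definedness to checking the degree-two relations, and then extend by left exactness (the paper routes this last step through \cite[Theorem 2.22]{us0}). That skeleton is right.

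However, you have mislocated the actual difficulty, and in doing so you've introduced a conceptual error. You flag as ``the main obstacle'' that ``one must check that restricted along $x_*\otimes x\hookrightarrow x^*\otimes x_*^*$ the pairing $p$ lands in $\1$ in the way dictated by the defining extension \Cref{eq:exatseqQ} of $Q$ by $\bC$.'' But the theorem imposes no hypothesis whatever on $p$ beyond its bare existence as a morphism $x^*\otimes x_*^*\to \1$, and no such compatibility condition can enter the proof: the whole point of passing from $\bT$ (where the pairing only lands in the nontrivial extension $Q$) to ${}_I\bT$ (where the unit $I$ absorbs $Q$) is to make the pairing completely unconstrained. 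If your functor required a further compatibility between $p$ and the two monomorphisms, the universality statement as written would simply be false. The genuine work — which you dismiss as ``formal'' — is identifying the kernels of the degree-two composition maps \Cref{eq:22}--\Cref{eq:27} explicitly (the paper does \Cref{eq:22} in detail, showing via a dimension count that the kernel is the bimodule generated by the element \Cref{eq:inker}) and then verifying that each generating relation is a pure symmetric-braiding identity holding in \emph{every} symmetric tensor category. That is where the argument actually lives, and it is entirely independent of any supposed compatibility between $p$ and the subobjects.
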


The argument will be analogous to that employed in the proof of \cite[Theorem 3.23]{us0}, revolving around the fact that the algebra $A$ in the preceding discussion is quadratic (\Cref{pr:isquad}). For that reason, it will be necessary to understand its components of degree $\le 2$. In degree zero things are simple: the following result is the version of \cite[Lemma 3.24]{us0} appropriate here.

\begin{lemma}\label{le:deg0}
  For $(l,m,n,p)\in \cP$ the endomorphism algebra of the injective object $I_{l,m,n,p}\in {}_I\bT$ is isomorphic to $\bC[S_l\times S_m\times S_n\times S_p]$, with the symmetric groups acting naturally on the relevant tensorands of
  \begin{align*}
    I_{l,m,n,p} &= I\otimes J_{l,m,n,p}\\
                &=I\otimes W_*^{\otimes l}\otimes (V^*)^{\otimes m}\otimes (V_*^*)^{\otimes n} \otimes W^{\otimes p}.
  \end{align*}
\end{lemma}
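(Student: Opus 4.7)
The plan is to combine the adjunction \Cref{eq:tit} with the decomposition of $I_{l,m,n,p}$ into indecomposable injectives $I_{\lambda,\mu,\nu,\pi}$ from the proof of \Cref{pr.is-ordered}(\labelcref{item:8}), and then exploit the ordered-Grothendieck structure of ${}_I\bT$---specifically condition (\labelcref{item:7}) of \Cref{def.ord-gr}---to compute the endomorphism algebra of each isotypic summand.

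The commuting actions of $S_l, S_m, S_n, S_p$ on the four groups of outer tensorands of $J_{l,m,n,p}$ yield a canonical algebra map
\[
\bC[S_l\times S_m\times S_n\times S_p]\ \longrightarrow\ \End_{{}_I\bT}(I_{l,m,n,p}),
\]
which must be shown to be an isomorphism. Schur--Weyl duality refines the decomposition of \Cref{pr.is-ordered}(\labelcref{item:8}) to
\[
I_{l,m,n,p}\ \cong\ \bigoplus_{|\lambda|=l,\,|\mu|=m,\,|\nu|=n,\,|\pi|=p}\bigl(\sigma^\lambda\otimes\sigma^\mu\otimes\sigma^\nu\otimes\sigma^\pi\bigr)\otimes I_{\lambda,\mu,\nu,\pi},
\]
where $\sigma^\bullet$ are the respective Specht modules treated as symmetric-group isotypic multiplicity spaces; by \Cref{thm:ihulls}, each $I_{\lambda,\mu,\nu,\pi}$ is an indecomposable injective in ${}_I\bT$ with simple socle $T_{\lambda,\mu,\nu,\pi}\in \cS_s$, with $s:=(l,m,n,p)$. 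The problem thus reduces to computing $\Hom_{{}_I\bT}(I_{\lambda,\mu,\nu,\pi},\,I_{\lambda',\mu',\nu',\pi'})$.

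I claim this Hom space equals $\bC$ when the two quadruples agree and $0$ otherwise. By \Cref{th:itsimp}, the $T_{\lambda,\mu,\nu,\pi}$ are pairwise non-isomorphic across partition quadruples and each has one-dimensional endomorphism algebra in ${}_I\bT$. Given a non-zero morphism $\phi\colon I_{\lambda,\mu,\nu,\pi}\to I_{\lambda',\mu',\nu',\pi'}$, essentiality of $T_{\lambda',\mu',\nu',\pi'}\subseteq I_{\lambda',\mu',\nu',\pi'}$ forces $T_{\lambda',\mu',\nu',\pi'}\subseteq \mathrm{im}\,\phi$, exhibiting $T_{\lambda',\mu',\nu',\pi'}$ as a simple subquotient of $I_{\lambda,\mu,\nu,\pi}$. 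By \Cref{def.ord-gr}(\labelcref{item:7}) applied to $X_s=I_{l,m,n,p}$, any simple subquotient of $I_{\lambda,\mu,\nu,\pi}\subseteq X_s$ lying outside $\mathrm{soc}\,I_{\lambda,\mu,\nu,\pi}=T_{\lambda,\mu,\nu,\pi}$ must belong to $\cS_t$ for some $t\prec s$; since $T_{\lambda',\mu',\nu',\pi'}\in\cS_s$, this forces $(\lambda',\mu',\nu',\pi')=(\lambda,\mu,\nu,\pi)$. In the diagonal case, applying the same reasoning to $\phi-c\cdot\id$---with $c\in\bC$ the scalar obtained by restricting $\phi$ to the socle via \Cref{th:itsimp}---yields $\phi=c\cdot\id$.

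Combining these,
\[
\End_{{}_I\bT}(I_{l,m,n,p})\ \cong\ \bigoplus_{\lambda,\mu,\nu,\pi}\End_\bC\bigl(\sigma^\lambda\otimes\sigma^\mu\otimes\sigma^\nu\otimes\sigma^\pi\bigr)\ \cong\ \bC[S_l\times S_m\times S_n\times S_p]
\]
via Wedderburn, and the resulting identification is readily seen to match the canonical map of the second paragraph. The main obstacle is the socle/essentiality argument of the third paragraph, which exploits the key feature---not shared with $\bT$, cf.\ the proof of \Cref{pr:gpendoalgs}---that indecomposable injectives in ${}_I\bT$ have simple rather than merely semisimple socle, enabling a clean use of condition (\labelcref{item:7}) to rule out off-diagonal contributions.
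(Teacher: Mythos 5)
Your proof is correct, but it takes a genuinely different route from the paper's. The paper exploits the free/forgetful adjunction \Cref{eq:tit} to rewrite $\End_{{}_I\bT}(I\otimes J_{l,m,n,p})\cong\Hom_{\bT}(J_{l,m,n,p},I\otimes J_{l,m,n,p})$, then observes that $J_{l,m,n,p}/\mathrm{soc}$ only has simple subquotients in $\cS_t$ with $t\prec s$, so restriction to the socle is an isomorphism onto an endomorphism algebra already computed on the $\bT$-side via \Cref{pr:gpendoalgs} (and the density argument of \Cref{le:dense-us0} underlying it). You instead stay entirely inside ${}_I\bT$: after the Schur--Weyl refinement $I_{l,m,n,p}\cong\bigoplus(\sigma^\lambda\otimes\sigma^\mu\otimes\sigma^\nu\otimes\sigma^\pi)\otimes I_{\lambda,\mu,\nu,\pi}$, you show $\Hom_{{}_I\bT}(I_{\lambda,\mu,\nu,\pi},I_{\lambda',\mu',\nu',\pi'})$ is $\delta\cdot\bC$ directly, using essentiality of the simple socle $T_{\lambda,\mu,\nu,\pi}$ and condition (\labelcref{item:7}) of \Cref{def.ord-gr} to rule out off-diagonal maps, and then assemble the group algebra by Wedderburn. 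Your route is more self-contained in that it bypasses the $\bT$-side density computation, at the cost of invoking Schur--Weyl to parametrize multiplicity spaces and leaving the matching of your Wedderburn isomorphism against the canonical permutation map slightly implicit. One minor correction to your closing remark: indecomposable injectives in $\bT$ also have simple socle by \Cref{thm:hulls} and \Cref{pr.soc}, so this is not a feature distinguishing ${}_I\bT$ from $\bT$; what your argument really exploits is that in ${}_I\bT$ the indecomposable injectives $I_{\lambda,\mu,\nu,\pi}$ have \emph{finite length}, which makes the subquotient bookkeeping via condition (\labelcref{item:7}) clean and terminating.
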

\begin{proof}
  We have
  \begin{equation*}
    \End_{{}_I\bT}(I\otimes J_{l,m,n,p})\cong \Hom_{\bT}(J_{l,m,n,p},I\otimes J_{l,m,n,p}). 
  \end{equation*}
  The quotient of $J_{l,m,n,p}$ by its socle in $\bT$ has a filtration by subquotients $L_{\lambda',\mu',\nu',\pi'}$ with
  \begin{equation*}
    (|\lambda'|,|\mu'|,|\nu'|,|\pi'|) \prec (l,m,n,p)\in \cP,
  \end{equation*}
  which thus admit no non-zero morphisms to
  \begin{equation*}
    \mathrm{soc}(I\otimes J_{l,m,n,p} )= \mathrm{soc}J_{l,m,n,p}. 
  \end{equation*}
  It follows that the restricting an arbitrary morphism 
  \begin{equation*}
    J_{l,m,n,p}\to I\otimes J_{l,m,n,p}
  \end{equation*}
  in $\bT$ to the socle induces an isomorphism
  \begin{equation}\label{eq:homend}
    \Hom_{\bT}(J_{l,m,n,p},I\otimes J_{l,m,n,p})
    \cong
    \End_{\bT}\left(W_*^{\otimes l}\otimes (V^*)^{\otimes m}\otimes (V_*^*)^{\otimes n} \otimes W^{\otimes p}\right). 
  \end{equation}
  We can see that the right-hand-side of \Cref{eq:homend} is naturally identifiable with $\bC[S_l\times S_m\times S_n\times S_p]$ as in \Cref{pr:gpendoalgs}.
\end{proof}

As for degree $1$, we need an analogue of \cite[Lemma 3.25]{us0}. Stating such an analogue will require some notation. Degree-one morphisms between the objects $I_{l,m,n,p}\in {}_I\bT$ come in three flavors:
\begin{align*}
  I_{l,m,n,p}&\to I_{l,m-1,n-1,p},\\
  I_{l,m,n,p}&\to I_{l+1,m-1,n,p},\\
  I_{l,m,n,p}&\to I_{l,m,n-1,p+1}.
\end{align*}
We distinguish families of each flavor, as follows. The morphism
\begin{equation*}
  \phi_{i,j}:I_{l,m,n,p}\to I_{l,m-1,n-1,p}\ \text{ for } 1\le i \le m,~1\le j \le n
\end{equation*}
applies the pairing
\begin{equation*}
  V^*\otimes V_*^*\to I
\end{equation*}
on the $i^{th}$ tensorand $V^*$ and the $j^{th}$ tensorand $V_*^*$ of
\begin{equation*}
  I_{l,m,n,p} = I\otimes W_*^{\otimes l}\otimes (V^*)^{\otimes m}\otimes (V_*^*)^{\otimes n} \otimes W^{\otimes p}
\end{equation*}
and acts as the identity on all other tensorands.

Next, we have the maps
\begin{equation*}
{}_{i,j}\pi:I_{l,m,n,p}\to I_{l+1,m-1,n,p}\ \text{ for } 1\le i \le m,~1\le j \le n
\end{equation*}
which
\begin{itemize}
\item[$-$] first permutes cyclically the first $i$ tensorands $V^*$;
\item[$-$] maps the new first (old $i^{th}$) tensorand $V^*$ onto $W_*=V^*/V_*$;
\item[$-$] finally permutes the last $m-j+1$ tensorands $W^*$ cyclically so the newly-created $W_*$ becomes the $j^{th}$.
\end{itemize}

Finally, we have the left-right mirror image 
\begin{equation*}
\pi_{i,j}:I_{l,m,n,p}\to I_{l,m,n-1,p+1}\ \text{ for } 1\le i \le m,~1\le j \le n
\end{equation*}
of ${}_{i,j}\pi$, obtained by substituting $V_*^*$ for $V^*$, $W$ for $W_*$, reversing the directions of the cyclic permutations, etc.

We write
\begin{equation*}
  S_{l,m,n,p}:=S_l\times S_m\times S_n\times S_p
\end{equation*}
for products of symmetric groups and unless specified otherwise, morphism spaces in \Cref{le:deg1} below are in the category ${}_I\bT$. 

\begin{lemma}\label{le:deg1}
  Let $(l,m,n,p)\in \cP$.
  \begin{enumerate}[(a)]
  \item\label{item:16} $\Hom(I_{l,m,n,p},I_{l,m-1,n-1,p})$ is isomorphic to $\bC[S_{l,m,n,p}]$ as a bimodule over
    \begin{equation*}
      \End I_{l,m-1,n-1,p} \cong \bC [S_{l,m-1,n-1,p} ]\text{ and }\End I_{l,m,n,p}\cong \bC [S_{l,m,n,p}],
    \end{equation*}
    with any of the morphisms $\phi_{i,j}$ as a generator for the right $\bC [S_{l,m,n,p} ]$-module structure while identifing the subgroups
    \begin{equation*}
     S_{m-1}\subset S_{m} \text{ and } S_{n-1} \subset S_{n}
    \end{equation*}
     with the isotropy groups of $i$ and $j$ respectively. 
  \item\label{item:17} $\Hom(I_{l,m,n,p},I_{l+1,m-1,n,p})$ is isomorphic to the induced module
    \begin{equation*}
      \bC [S_{l+1,m,n,p}]\cong \mathrm{Ind}_{S_l}^{S_{l+1}} \bC [S_{l,m,n,p}] = \bC [S_{l+1}]\otimes_{\bC [S_l]}\bC [S_{l,m,n,p}]
    \end{equation*}
    as a bimodule over
    \begin{equation*}
      \End I_{l+1,m-1,n,p}\cong \bC [S_{l+1,m-1,n,p}]\text{ and }\End I_{l,m,n,p} \cong \bC [S_{l,m,n,p}],
    \end{equation*}
    with ${}_{i,j}\pi$ as a generator for the right $\bC [S_{l,m,n,p}]$-module structure while identifying the subgroups
    \begin{equation*}
      S_l\subset S_{l+1}\text{ and } S_{m-1}\subset S_m
    \end{equation*}
    with the isotropy groups of $j$ and $i$ respectively. 
  \item\label{item:18} The left-right mirror image of \Cref{item:17}: $\Hom(I_{l,m,n,p},I_{l,m,n-1,p+1})$ is isomorphic to the induced module
    \begin{equation*}
      \bC [S_{l,m,n,p+1}]\cong \mathrm{Ind}_{S_p}^{S_{p+1}} \bC [S_{l,m,n,p}] = \bC [S_{p+1}]\otimes_{\bC [S_p]}\bC [S_{l,m,n,p}]
    \end{equation*}
    as a bimodule over
    \begin{equation*}
      \End I_{l,m,n-1,p+1}\cong \bC [S_{l,m,n-1,p+1}]\text{ and }\End I_{l,m,n,p}\cong \bC [S_{l,m,n,p}],
    \end{equation*}
    with $\pi_{i,j}$ as a generator for the right $\bC [S_{l,m,n,p}]$-module structure while identifying the subgroups
    \begin{equation*}
      S_p\subset S_{p+1}\text{ and } S_{n-1}\subset S_n
    \end{equation*}
    with the isotropy groups of $j$ and $i$ respectively. 
  \end{enumerate}
\end{lemma}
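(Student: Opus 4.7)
The proof proceeds along the lines of \cite[Lemma 3.25]{us0}, from which we inherit the bulk of the combinatorial work; we sketch the adaptation to the present $I$-module setting. I describe the argument for part \labelcref{item:16}; parts \labelcref{item:17} and \labelcref{item:18} are analogous (and \labelcref{item:18} follows from \labelcref{item:17} via the involution $V\leftrightarrow V_*$ of \Cref{re.symmetry}).

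Applying the adjunction \Cref{eq:tit}, we reduce to studying
\begin{equation*}
\Hom_{\bT}(J_{l,m,n,p},\,I\otimes J_{l,m-1,n-1,p})
\end{equation*}
as a $(\bC[G'],\bC[G])$-bimodule, with $G:=S_{l,m,n,p}$ and $G':=S_{l,m-1,n-1,p}$. The $(G',G)$-sub-bimodule generated by a fixed $\phi:=\phi_{i,j}$ is computed directly: the right $G$-action by precomposition permutes source tensorands, and one checks the identity $\phi\circ g=\iota(g)\circ\phi$ for every $g$ in the isotropy of $(i,j)$ inside $G$, where $\iota:\mathrm{Stab}_G(i,j)\xrightarrow{\sim}G'$ is the canonical identification sending the isotropies $S_{m-1}\subset S_m$ of $i$ and $S_{n-1}\subset S_n$ of $j$ onto the corresponding subgroups of $G'$. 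These relations identify the sub-bimodule generated by $\phi$ with $\bC[G]$, carrying the natural $(G',G)$-bimodule structure from the embedding $G'\hookrightarrow G$; in particular this sub-bimodule has dimension $|G|=l!\,m!\,n!\,p!$.

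It remains to show that this sub-bimodule exhausts $\Hom$, which we do via an upper bound. Any morphism $f\in\Hom_{\bT}(J_{l,m,n,p},\,I\otimes J_{l,m-1,n-1,p})$ vanishes on the socle $L_{l,m,n,p}\subset J_{l,m,n,p}$ by \Cref{thm:hulls}, since $L_{l,m,n,p}\notin \cS_{(l,m-1,n-1,p)}$; then the sharpness of $\bT$ (\Cref{thm:right-def}) forces $f$ to be determined by its action on the subquotients of $J_{l,m,n,p}$ at index $(l,m-1,n-1,p)$. These subquotients constitute $mn$ copies of $L_{l,m-1,n-1,p}$, one for each pair of $V^*$- and $V_*^*$-slots available to be contracted, and each contributes at most $|G'|$ independent morphisms into $I\otimes J_{l,m-1,n-1,p}$ by \Cref{le:deg0}, yielding the matching upper bound $mn\cdot|G'|=|G|$. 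The main obstacle is the careful tracking of the socle filtration of $J_{l,m,n,p}$ to rule out contributions from deeper layers; this step is handled as in \cite[Lemma 3.25]{us0} by repeatedly invoking the essentiality of the socle in the injective hull on the target side.
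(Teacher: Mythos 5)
The proposal takes a genuinely different route from the paper, but it contains gaps that make it incomplete as written.

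The paper proves injectivity of the bimodule map $\bC[S_{l,m,n,p}]\to\Hom(I_{l,m,n,p},I_{l,m-1,n-1,p})$ by factoring it through a tensor product over the outer indices $(l,p)$ and the inner indices $(m,n)$ (diagram \Cref{eq:largediag}), thereby reducing to the case $l=p=0$, which is then delegated to \cite[Lemma 3.25(a)]{us0}; surjectivity comes from \Cref{le:fullonx}. You instead propose a direct dimension count through the socle filtration of $J_{l,m,n,p}$. That strategy is legitimate in principle, but as written it has two concrete problems.

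First, the assertion that the sub-bimodule generated by $\phi_{i,j}$ \emph{has} dimension $|G|=l!\,m!\,n!\,p!$ is circular. The relations $\phi\circ g=\iota(g)\circ\phi$ for $g\in\mathrm{Stab}_G(i,j)$ exhibit the sub-bimodule as a cyclic right $\bC[G]$-module, hence a \emph{quotient} of $\bC[G]$; nothing in what you wrote excludes additional relations among the $\phi\circ\sigma$, $\sigma\in G$. Establishing their linear independence is precisely the content of the lemma, and it is here that the paper does the real work via the tensor factorization and the cited result of \cite{us0}. You would need either that factorization argument or an independent proof of linear independence.

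Second, the assertion that the second socle layer of $J_{l,m,n,p}$ contains exactly (or at least) $mn$ copies of $L_{l,m-1,n-1,p}$ is stated without justification, and it is comparable in difficulty to the result being proved. Even granting it, notice that the resulting bound $\dim\Hom\le mn\,|G'|=|G|$ runs in the same direction as the bound one already gets from surjectivity (via \Cref{le:fullonx}, which you do not invoke), so it cannot by itself establish injectivity; what one needs is to know the multiplicity \emph{exactly}, so that $\dim\Hom = mn\,|G'| = |G|$, which combined with surjectivity forces the bimodule map to be an isomorphism. This is a fixable presentational issue, but the underlying multiplicity computation remains unproved.

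A minor point: the appeal to sharpness (\Cref{thm:right-def}) is not the right tool for showing that $f$ is determined by the second socle layer. The relevant mechanism is that $I_{l,m-1,n-1,p}$ is injective, so $\Ext^1(\cdot,I_{l,m-1,n-1,p})=0$ and $\dim\Hom(J,I_{l,m-1,n-1,p})$ decomposes as a sum over socle layers of $J$; only layer two can contribute because $\cS_{(l,m-1,n-1,p)}$-constituents occur only at depth two, by the defect formula of \Cref{le.defct}.

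Finally, for part \labelcref{item:18} the reduction to \labelcref{item:17} via the symmetry of \Cref{re.symmetry} is fine and matches the spirit of the paper's remark that \labelcref{item:18} is the mirror image of \labelcref{item:17}.
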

\begin{proof}  
  {\Cref{item:16}} Having fixed $i$ and $j$ as in the statement, we have a morphism
  \begin{equation}\label{eq:mnm1n1}
    \bC [S_{l,m,n,p}]\to \Hom(I_{l,m,n,p},I_{l,m-1,n-1,p})
  \end{equation}
  of $(\bC[S_{l,m-1,n-1,p}], \bC[S_{l,m,n,p}])$-bimodules, sending $1$ to $\phi_{i,j}$. The fact that that morphism is surjective follows from \Cref{le:fullonx}, so it is the injectivity that we focus on.   

  Note that the morphism \Cref{eq:mnm1n1} factors as
  \begin{equation}\label{eq:largediag}
    \begin{tikzpicture}[auto,baseline=(current  bounding  box.center)]
      \path[anchor=base] 
      (0,0) node (l) {$\bC [S_{l,m,n,p}]$}
      +(1,2) node (ul) {$\bC [S_{l,p}]\otimes \bC [S_{m,n}]$}
      +(8,2) node (ur) {$(\End I_{l,0,0,p})\otimes \Hom(I_{0,m,n,0},I_{0,m-1,n-1,0})$}
      +(9,0) node (r) {$\Hom(I_{l,m,n,p},I_{l,m-1,n-1,p})$}
      ;
      \draw[->] (l) to[bend left=6] node[pos=.5,auto] {$\scriptstyle \cong$} (ul);
      \draw[->] (ul) to[bend left=0] node[pos=.5,auto] {$\scriptstyle $} (ur);
      \draw[->] (ur) to[bend left=6] node[pos=.5,auto,swap] {$\scriptstyle $} (r);
      \draw[->] (l) to[bend right=0] node[pos=.5,auto,swap] {$\scriptstyle $} (r);
    \end{tikzpicture}
  \end{equation}
  where the downward arrow is the tensor product (over the unit object $I\in {}_I\bT$) of morphisms in ${}_I\bT$. Since the vertical maps are injective, the bottom morphism (which is our target) will be one-to-one if and only if the top arrow is. The left-hand tensorand
  \begin{equation*}
    \bC [S_{l,p}]\to \End I_{l,0,0,p}
  \end{equation*}
  of the top map in \Cref{eq:largediag} is an isomorphism by \Cref{le:deg0}, so it is enough to consider the right hand tensorand
  \begin{equation*}
    \bC [S_{m,n}]\to \Hom(I_{0,m,n,0},I_{0,m-1,n-1,0})
  \end{equation*}
  of that map; equivalently, it suffices to resolve the present discussion in the case $l=p=0$. But this follows from \cite[Lemma 3.25 (a)]{us0}, analogous to the result being proven here, by noting that the restrictions of the compositions
  \begin{equation*}
    \phi_{i,j}\circ \sigma: I_{0,m,n,0}\to I_{0,m-1,n-1,0},\quad \sigma\in S_{m,n}
  \end{equation*}
  to
  \begin{equation*}
    (V^*)^{\otimes m}\otimes V^{\otimes n}\subset (V^*)^{\otimes m}\otimes (V_*^*)^{\otimes n}\subset I_{0,m,n,0}
  \end{equation*}
  are precisely the morphisms proven linearly independent there. 

  {\Cref{item:17}} We again have an $(\bC[S_{l+1,m-1,n,p}],\bC[S_{l,m,n,p}])$-bimodule map
  \begin{equation}\label{eq:lml1m1}
    \bC [S_{l+1,m,n,p}] \to \Hom(I_{l,m,n,p},I_{l+1,m-1,n,p})
  \end{equation}
  sending $1$ to ${}_{i,j}\pi$, and its surjectivity is a consequence of \Cref{le:fullonx}. The injectivity follows as in part \Cref{item:16}, by first decomposing \Cref{eq:lml1m1} as a tensor product of maps
  \begin{equation*}
    \bC [S_{l+1,m}] \to \Hom(I_{l,m,0,0},I_{l+1,m-1,0,0})
  \end{equation*}
  and
  \begin{equation*}
    \bC [S_{n,p}] \to \End I_{0,0,n,p}. 
  \end{equation*}
  The latter is an isomorphism by \Cref{le:deg0}, and the former is an injection as in the proof of \Cref{item:16}, by appealing to \cite[Lemma 3.25 (b)]{us0}. 
  
  {\Cref{item:18}} As noted in the statement, this is entirely parallel to part \Cref{item:17}, interchanging the roles of $V^*$ and $V_*^*$, $l$ and $p$, $m$ and $n$, etc.
\end{proof}


The composition map from the degree-one to the degree-two component of $A$ comes in several varieties, depending on the domain. Before listing the various options, it will be convenient to introduce

\begin{notation}\label{not:iud}
  For a quadruple $(l,m,n,p)\in \cP$ we write
  \begin{equation*}
    I_{l,m\downarrow,n\downarrow,p}:=\Hom_{{}_I\bT}(I_{l,m,n,p},I_{l,m-1,n-1,p})
  \end{equation*}
  and similarly for other morphism spaces, with arrows indicating whether the respective index increases or decreases, and multiple arrows to indicate the amount. Other examples are
  \begin{align*}
    I_{l,m\downarrow\downarrow,n\downarrow\downarrow,p}&:=\Hom_{{}_I\bT}(I_{l,m,n,p},I_{l,m-2,n-2,p}){,}\\
    I_{l\uparrow,m\downarrow,n,p}&:=\Hom_{{}_I\bT}(I_{l,m,n,p},I_{l+1,m-1,n-2,p})\\
  \end{align*}
  and so on.

  For composable morphism spaces in ${}_I\bT$ we denote by `$\odot$' the tensor product over the endomorphism algebra of the intermediate object. For example:
  \begin{equation*}
    I_{l,(m-1)\downarrow,(n-1)\downarrow,p}\odot I_{l,m\downarrow,n\downarrow,p}
    :=
    I_{l,(m-1)\downarrow,(n-1)\downarrow,p}\otimes_{\bC [S_{l,m-1,n-1,p}]} I_{l,m\downarrow,n\downarrow,p}{.}
  \end{equation*}
\end{notation}


With in place, the possibilities for composition of degree-1 morphisms are:

\begin{equation}\label{eq:22}
  I_{l,(m-1)\downarrow,(n-1)\downarrow,p}\odot I_{l,m\downarrow,n\downarrow,p}
  \to
  I_{l,m\downarrow\downarrow,n\downarrow\downarrow,p},
\end{equation}

which is mirror-self-dual,

\begin{equation}\label{eq:23}
  I_{(l+1)\uparrow,(m-1)\downarrow,n,p}\odot I_{l\uparrow,m\downarrow,n,p}
  \to
  I_{l\uparrow\uparrow,m\downarrow\downarrow,n,p}
\end{equation}

and its mirror image

\begin{equation}\label{eq:24}
  I_{l,m,(n-1)\downarrow,(p+1)\uparrow}\odot I_{l,m,n\downarrow,p\uparrow}
  \to
  I_{l,m,n\downarrow\downarrow,p\uparrow\uparrow},
\end{equation}

\begin{equation}\label{eq:25}
  (I_{l\uparrow,(m-1)\downarrow,n-1,p}\odot I_{l,m\downarrow,n\downarrow,p})
  \oplus
  (I_{l+1,(m-1)\downarrow,n\downarrow,p}\odot I_{l\uparrow,m\downarrow,n,p})
  \to
  I_{l\uparrow,m\downarrow\downarrow,n\downarrow,p}
\end{equation}

and its mirror image 

\begin{equation}\label{eq:26}
  (I_{l,m-1,(n-1)\downarrow,p\uparrow}\odot I_{l,m\downarrow,n\downarrow,p})
  \oplus
  (I_{l,m\downarrow,(n-1)\downarrow,p+1}\odot I_{l,m,n\downarrow,p\uparrow})
  \to
  I_{l,m\downarrow,n\downarrow\downarrow,p\uparrow},
\end{equation}

and finally, the self-dual

\begin{equation}\label{eq:27}
  (I_{l+1,m-1,n\downarrow,p\uparrow}\odot I_{l\uparrow,m\downarrow,n,p}
)  \oplus
  (I_{l\uparrow,m\downarrow,n-1,p+1}\odot I_{l,m,n\downarrow,p\uparrow})
  \to
  I_{l\uparrow,m\downarrow,n\downarrow,p\uparrow}.
\end{equation}

The nine `$\odot$' symbols above account for the nine possible ways of composing two morphisms, each of one of the three flavors listed in \Cref{le:deg1}. 

\begin{remark}
  Note that in all cases the product `$\odot$' conserves the total number of up as well as down arrows.
\end{remark}


\pf{th:sl-univ}
\begin{th:sl-univ}[sketch]
  As in the proof of \cite[Theorem 3.23]{us0}, an appeal to \cite[Theorem 2.22]{us0} together with \Cref{pr.itis-ordered} proves the statement as soon as we argue that the initial data of
  \begin{align*}
    x\hookrightarrow x_*^*, ~x_*\hookrightarrow x^*\text{ and}~p:x^*\otimes x_*^*\to {\bf 1}
  \end{align*}
  in the tensor category $\cD$ extends to a linear monoidal functor
  \begin{equation*}
    F:{}_I\bT'\to \cD,
  \end{equation*}
  where ${}_I\bT'$ is, as in \Cref{le:fullonx}, the full subcategory of ${}_I\bT$ on the objects $I_{l,m,n,p}$. Set ${}_I T:=T\otimes I$ for any $T\in \bT$. Since the objects of ${}_I\bT'$ are precisely the tensor powers (over $I\in {}_I\bT$) and the morphisms are tensor products and compositions of permutation of tensorands, evaluations \Cref{eq:pairi}, inclusions ${}_IV_*\subset {}_IV^*$ and ${}_IV\subset {}_IV_*^*$, etc., there is an obvious candidate for such an extension $F$, sending
  \begin{align*}
    {}_IV_*^*&\mapsto x_*^*\quad\text{and}\quad {}_IW\mapsto x_*^*/x,\\
    {}_IV^*&\mapsto x^*\quad\text{and}\quad {}_IW_*\mapsto x^*/x_*,\\
    \text{\Cref{eq:pairi}}&\mapsto p,
  \end{align*}
  etc. What we have to argue is that that extension is in fact well defined.

  The fact that, by \Cref{pr:isquad}, the algebra $\mathcal{A}$ defined in Section \ref{eq:a} is quadratic, means that it will be enough to check that the degree-two relations between degree-1 morphisms between the $I_{l,m,n,p}$ (i.e. the kernels of the maps \Cref{eq:22,eq:23,eq:24,eq:25,eq:26,eq:27}) vanish in $\cD$ upon substituting $x$ for ${}_IV$, $x_*$ for ${}_IV_*$, etc. This would be a somewhat tedious and unenlightening check if done exhaustively, so we exemplify the argument by treating \Cref{eq:22} alone. In that regard, we make the claim:

 {\it The kernel of the composition \Cref{eq:22} is generated, as an $(S_{l,m-2,n-2,p},S_{l,m,n,p})$-bimodule, by
    \begin{equation}\label{eq:inker}
      \phi_{m-1,n-1}\otimes \phi_{m,n} - \phi_{m-1,n-1}\otimes \phi_{m,n}\circ (m,m-1)(n,n-1),
    \end{equation}
    where $(m,m-1)$ is the respective transposition in $S_m\subset S_{l,m,n,p}$ and similarly,
    \begin{equation*}
      (n,n-1)\in S_n\subset S_{l,m,n,p}. 
    \end{equation*}
  }
  
  Assuming the claim for now, we observe that the relations annihilated by \Cref{eq:22} hold in any tensor category. It follows that our candidate functor $F$ is indeed compatible with the quadratic relations imposed by composition and hence is well defined. It thus remains to prove the claim; this is the goal we focus on for the duration of the present proof, following the layout of the proof of \cite[Lemma 3.27 (a)]{us0}.

  First, note that the morphism \Cref{eq:22} is surjective by \Cref{le:fullonx}. Secondly, the fact that \Cref{eq:inker} belongs to the kernel of \Cref{eq:22} is immediate: this is because 
  \begin{itemize}
  \item[$-$] evaluating the $m^{th}$ tensorand ${}_IV^*$ against the $n^{th}$ tensorand ${}_IV_*^*$, and then
  \item[$-$] evaluating the $(m-1)^{st}$  tensorand ${}_IV^*$ against the $(n-1)^{st}$  tensorand ${}_IV_*^*$
  \end{itemize}

  has the same effect as

  \begin{itemize}
  \item[$-$] permuting the $m^{th}$ and $(m-1)^{st}$ tensorands ${}_IV^*$,
  \item[$-$] permuting the $n^{th}$ and $(n-1)^{st}$ tensorands ${}_IV_*^*$
  \end{itemize}
  and then repeating the two evaluations above.

  The proof will thus be complete if we argue that the kernel of \Cref{eq:22} is not {\it strictly} larger than the bimodule generated by \Cref{eq:inker}. We do this by a dimension count. Tensoring two instances of \Cref{le:deg1}, \Cref{item:16} over $\bC [S_{l,m-1,n-1,p}]$, we conclude that the domain
  \begin{equation*}
    I_{l,(m-1)\downarrow,(n-1)\downarrow,p}\odot I_{l,m\downarrow,n\downarrow,p}
  \end{equation*}
  of \Cref{eq:22} is isomorphic to $\bC [S_{l,m,n,p}]$ as an $(\bC[S_{l,m-2,n-2,p}],\bC[S_{l,m,n,p}])$-bimodule, with
  \begin{itemize}
  \item[$-$] $\phi_{m,n}$ identified with the generator $1\in \bC [S_{l,m,n,p}]$, and
  \item[$-$] $S_{l,m-2,n-2,p}\subset S_{l,m,n,p}$ being the subgroup fixing $m$, $m-1$, $n$ and $n-1$. 
  \end{itemize}

  This identification turns the putative generator \Cref{eq:inker} of the kernel of \Cref{eq:22} into
  \begin{equation}\label{eq:elem}
    1-(m,m-1)(n,n-1)\in \bC [S_{l,m,n,p}].
  \end{equation}
  The $(\bC[S_{l,m-2,n-2,p}],\bC[S_{l,m,n,p}])$-bimodule generated by \Cref{eq:elem} coincides with the right $S_{l,m,n,p}$-module generated by the same element. The dimension of that module is half that of $\bC [S_{l,m,n,p}]$, and hence
  \begin{equation*}
    \dim \ker ~\text{\Cref{eq:22}} = \frac 12 l!m!n!p! = \frac 12 \left(\dim~\text{domain of \Cref{eq:22}}\right).
  \end{equation*}
  The desired conclusion that the kernel of the surjection \Cref{eq:22} cannot contain the bimodule generated by \Cref{eq:inker} strictly will thus follow if we prove that
  \begin{equation*}
    \dim I_{l,m\downarrow\downarrow,n\downarrow\downarrow,p} = \dim \Hom(I_{l,m,n,p},I_{l,m-2,n-2,p})\ge \frac 12 l!m!n!p!{.}
  \end{equation*}

  Since we have an embedding
  \begin{equation*}
    (\End I_{l,0,0,p})\otimes I_{0,m\downarrow\downarrow,n\downarrow\downarrow,0}\to I_{l,m\downarrow\downarrow,n\downarrow\downarrow,p}
  \end{equation*}
  and the left-hand tensorand is isomorphic to $\bC [S_{l,p}]$ by \Cref{le:deg0}, it is enough to assume that $l=p=0$ and show that
  \begin{equation*}
    \dim \Hom_{{}_I\bT}\left(({}_IV^*)^{\otimes m}\otimes_I ({}_IV_*^*)^{\otimes n}, ({}_IV^*)^{\otimes (m-2)}\otimes_I ({}_IV_*^*)^{\otimes (n-2)}\right)
    \ge
    \frac 12 m! n!
  \end{equation*}
  or equivalently, via the adjunction \Cref{eq:tit}, that 
    \begin{equation*}
    \dim \Hom_{\bT}\left((V^*)^{\otimes m}\otimes (V_*^*)^{\otimes n}, I\otimes (V^*)^{\otimes (m-2)}\otimes (V_*^*)^{\otimes (n-2)}\right)
    \ge
    \frac 12 m! n!.
  \end{equation*}  
  This, however, follows by restricting the morphisms on the left to $V_*^{\otimes m}\otimes V^{\otimes n}$ and noting that we already know the analogous inequality
  \begin{equation*}
    \dim \Hom_{\bT}\left(V_*^{\otimes m}\otimes V^{\otimes n}, V_*^{\otimes (m-2)}\otimes V^{\otimes (n-2)}\right)
    \ge
    \frac 12 m! n!
  \end{equation*}
  from the computation carried out in \cite[Lemma 6.3]{DPS}, or from \cite[Lemma 3.27 (a)]{us0} (which is analogous to the claim being proven here).
\end{th:sl-univ}

\section{Orthogonal and symplectic analogues of the categories $\bT$ and ${}_I\bT$}\label{se:bcd}

In this final section we discuss briefly the orthogonal and symplectic versions of the categories $\bT$ and $_I \bT$. The orthogonal and symplectic analogues of the Lie algebra $\fgl^M(V,V_*)$ are the Lie algebras $\fo(V)$ and $\fs\fp(V)$ where $V$ is now equipped with a nondegenerate symmetric or antisymmetric bilinear form $\langle \cdot, \cdot \rangle:V\times V \rightarrow \mathbb{C}$, yielding a respective linear map $S^2 V\rightarrow \bC$ or $\Lambda^2 V \rightarrow \bC$. The Lie algebras $\fo(V)$ and $\fs\fp(V)$ are defined as the respective largest subalgebras of $\fgl^M(V,V)$ for which the map $S^2 V\rightarrow \bC$ or $\Lambda^2 V \rightarrow \bC$ is a morphism of representations.

Let $\fg=\fo(V)$, $\fs\fp(V)$. Then $V$ is a submodule of $V^*$ (via the form $\langle \cdot,\cdot\rangle$), and the $\fg$-module $W:=V^*/V$ is irreducible. This can be proved for instance by considering $W$ over the family of Lie subalgebras $\fgl^M(V',V'_*)\subset \fg$ for varying decompositions of $V$ as $V'\oplus V'_{*}$ for maximal isotropic subspaces $V'$, $V'_{*}$. Over each such subalgebra $W$ is isomorphic to \begin{equation*}
V'^*/V_*\oplus (V_*)^*/V
\end{equation*} and hence has precisely two proper submodules. Since this submodules vary when $V'$ and $V'_*$ vary, the module $W$ is irreducible over $\fg$.

 Furthermore, for any Young diagram $\lambda$, the irreducible $\fgl^M(V,V)$-module $V_{\lambda}$ restricts to $\fg$ yielding a generally reducible $\fg$-module. In all cases the socle of $V_{\lambda}|_{\fg}$ is simple, and we denote it by $V_{[\lambda]}$ for $\fg=\fo(V)$ and by $V_{\langle\lambda \rangle}$ for $\fg=\fs\fp(V)$. It is clear that the Lie algebras $\fo(\infty)$ and $\fs\fp(\infty)$ considered in \cite{PS} are subalgebras respectively of $\fo(V)$ and $\fs\fp(V)$, and by \cite[Theorem 7.10]{PS2} the socle filtrations of $V_{\lambda}|_{\fo(\infty)}$ and $V_{\lambda}|_{\fs\fp(\infty)}$, described explicitly in \cite{PS}, coincide with the respective socle filtrations of $V_{\lambda}|_{\fo(V)}$ and $V_{\lambda}|_{\fs\fp(V)}$.

If $\lambda$, $\mu$ is a pair of Young diagrams, we set \begin{equation*} L_{\lambda,\mu}:=\begin{cases} W_{\lambda}\otimes V_{[\mu]}\text{ for } \fg=\fo(V), \\ W_{\lambda}\otimes V_{\langle\mu \rangle}\text{ for } \fg=\fs\fp(V).
\end{cases}\end{equation*}
Then $L_{\lambda,\mu}$ is a simple $\fg$-module. This can be seen by essentially the same argument as in the case of $W$. 
Moreover, \begin{equation*}
L_{\lambda,\mu} \cong L_{\lambda',\mu'} \text{ if and only if } \lambda=\lambda' \text{ and } \mu=\mu'.
\end{equation*} 

The analogue of the injective object $I$ from Section \ref{subse:ord-cat} is constructed as follows. One sets
\begin{equation*}
  F_{\fg}:=\begin{cases}
    S^2 W\text{ for }\fg=\fo(V), \\
    \Lambda^2 W\text{ for }\fg=\fs\fp(V).
  \end{cases}
\end{equation*}
Furthermore, the quotient $Q_{\fg}$ of $S^2V^*$ by the sum of kernels of the pairings $V^*\otimes V\rightarrow \bC$ and $S^2 V\rightarrow \bC$ admits a non-splitting exact sequence
\begin{equation*}
0 \rightarrow \bC \rightarrow Q_{\fg} \rightarrow F_{\fg} \rightarrow 0.
\end{equation*}

The socle filtration of $I$ has the form \begin{equation*}
\begin{tikzpicture}[auto,baseline=(current  bounding  box.center)]
\path (0,0) node[name=1, minimum width=1cm, rectangle split, rectangle split parts=3, draw, rectangle split draw splits=true, align=center] 
{ $S^2F_{\fg}$
	\nodepart{two} $F_{\fg}$
	\nodepart{three} $\bC$
} + (0,1.5) node {$\vdots$};
\path (0.7,-0.95) node {$.$};

\end{tikzpicture}  
\end{equation*}
Then the embedding (\ref{eq:skqsubsetsk+1q}) induces embeddings \begin{equation*}
S^kQ_{\fg} \hookrightarrow S^{k+1} Q_{\fg},
\end{equation*} which allow us to define $I_{\fg}$ as the colimit 
\begin{equation*}
I_{\fg}=\varinjlim S^kQ_{\fg}.
\end{equation*} 
Moreover, by the same construction as in Section \ref{subse:intalg} $I_{\fg}$ is endowed with the structure of a commutative algebra. 

The category $\bT_{\fg}$ is introduced in the same way as in Section \ref{subse:ord-cat}, where now $J_s=W^{\otimes l} \otimes V^{*\otimes m}$ for pairs $s=(l,m)$, $l$, $m\in\bN$, and the object $I$ is replaced by $I_{\fg}$. In the Introduction we denoted this category by $\bT^2_{\fg}$ to emphasize that is generated as a tensor category by two modules $V$ and $V^*$. In the rest of the paper we use the shorter notation $\bT_{\fg}$. We leave it to the reader to check that Proposition \ref{pr.is-ordered} holds also for the category $\bT_{\fg},$  and that $I_{\fg}$ is an injective hull in $\bT_{\fg}$ of the object $\bC$. The respective partial order $(l,m)\preceq (l',m')$ on $\bN\times\bN$ is given by $l\geq l'$, $m\leq m'$, $l+m'\leq l'+m$. The results of Section \ref{subse:endo} also hold with obvious modification.

The canonical injective resolution (\ref{eq:15}) stays the same with $F$ replaced by $F_{\fg}$, however now the socle of the object $\left(I_{\fg}\right)_j=I_{\fg}\otimes \Lambda^j F_{\fg} $ decomposes as $\bigoplus  \bS_{\lambda} V$ for $\fg=\fs\fp(V)$ and $\bigoplus  \bS_{\lambda^{\perp}} V$ for $\fg=\fo(V)$ where $\lambda$ runs over all special partitions of degree $j$.

\begin{corollary}
	For any simple object $X$ of $\bT_{\fo(V)}$ we have \begin{equation*}
	\operatorname{Ext}_{\bT_{\fo(V)}}^j (X,\bC)=\begin{cases}	0 \textup{ if }X\not\cong L_{\lambda,\varnothing} \textup{ for  a special } \lambda\textup{  with } |\lambda|=j, \\ \bC \textup{ if }X\cong L_{\lambda,\varnothing} \textup{ for  a special } \lambda\textup{  with } |\lambda|=j,
	\end{cases}
	\end{equation*}
	and for any simple object $X$ of $\bT_{\fs\fp(V)}$ we have \begin{equation*}
	\operatorname{Ext}_{\bT_{\fs\fp(V)}}^j (X,\bC)=\begin{cases}	0 \textup{ if }X\not\cong L_{\lambda^{\perp},\varnothing} \textup{ for  a special } \lambda\textup{  with } |\lambda|=j, \\ \bC \textup{ if }X\cong L_{\lambda^{\perp},\varnothing} \textup{ for  a special } \lambda\textup{  with } |\lambda|=j.
	\end{cases}
	\end{equation*}
\end{corollary}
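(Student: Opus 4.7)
The plan is to imitate the proof of \Cref{cor.c} in the orthogonal/symplectic setting. First, I would invoke the injective resolution
\begin{equation*}
0\to \bC\to I_0\to I_1\to\cdots,\qquad I_j:=I_{\fg}\otimes \Lambda^j F_{\fg},
\end{equation*}
in $\bT_{\fg}$ announced in the discussion immediately preceding the statement. Its construction parallels that of \Cref{eq:15} with $F$ replaced by $F_\fg$ and $I$ by $I_\fg$, and the arguments of \Cref{le:jrows} through \Cref{th:injresc} transport verbatim; the relevant graded commutative Hopf algebra is now $S^\bullet Q_\fg$, and the plethysm identifying the cohomology of the associated-graded complex is supplied by the appropriate parts of \Cref{pr.plth} applied to $\Lambda^2 W$ or $S^2 W$.

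Next, $\operatorname{Ext}_{\bT_\fg}^j(X,\bC)$ is computed as the $j$-th cohomology of $\operatorname{Hom}(X,I_\bullet)$. For a simple $X$, every non-zero morphism $X\to I_j$ is an embedding, and its image lies in $\operatorname{soc}(I_j)=\operatorname{soc}(I_\fg\otimes \Lambda^j F_\fg)=\Lambda^j F_\fg$ (the $\fo$/$\fs\fp$ analogue of \Cref{thm:hulls}); hence $\operatorname{Hom}(X,I_j)\cong \operatorname{Hom}(X,\Lambda^j F_\fg)$. By the plethysm formulas \Cref{pr.plth}\labelcref{item:12} and \labelcref{item:13}, $\Lambda^j F_\fg$ decomposes, with multiplicity one in each summand, as
\begin{equation*}
\Lambda^j\Lambda^2 W=\bigoplus_{\lambda\text{ special}}W_\lambda\quad\text{and}\quad \Lambda^j S^2 W=\bigoplus_{\lambda\text{ special}}W_{\lambda^\perp}
\end{equation*}
for $\fg=\fs\fp(V)$ and $\fg=\fo(V)$ respectively, the sums ranging over special partitions of the appropriate size.

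Each summand $W_\mu$ is a purely thick simple object $L_{\mu,\varnothing}$ with scalar endomorphism algebra (the $\fo$/$\fs\fp$ analogue of \Cref{th:scalaralgs}), so $\operatorname{Hom}(X,I_j)$ equals $\bC$ when $X$ is isomorphic to one of these summands and is zero otherwise. Any simple $X=L_{\lambda,\mu}$ with $\mu\neq\varnothing$ therefore has $\operatorname{Hom}(X,I_j)=0$ for every $j$; and any purely thick simple $L_{\mu,\varnothing}$ appears in the socle of at most one $I_j$. Consequently the complex $\operatorname{Hom}(X,I_\bullet)$ carries at most one non-zero term, and its cohomology is read off directly from the socle decomposition, yielding the stated formula.

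The main technical ingredient to verify is the identification $\operatorname{soc}(I_\fg\otimes \Lambda^j F_\fg)=\Lambda^j F_\fg$, i.e.\ the $\fo$/$\fs\fp$ analogue of \Cref{thm:hulls}. This should follow from orthogonal/symplectic versions of \Cref{le.tens-dense} and \Cref{pr.ann}, reducing in essence to density statements for $\fo(V)$ and $\fs\fp(V)$ acting on their natural and dual modules. I expect no substantive new difficulty beyond that already present in the $\fgl^M$ case, but this is the step that carries the weight of the argument.
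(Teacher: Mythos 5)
Your approach is the right one, and it is in effect the argument the paper intends (the paper gives no explicit proof, but the corollary mirrors \Cref{cor.c}, whose proof proceeds exactly as you propose: a simple object maps only into the socle of each term of the injective resolution, each simple appears in the socle of at most one term, so the Hom-complex degenerates and the Ext groups are read off from the socles). Your reduction to the $\fo$/$\fs\fp$ analogue of \Cref{thm:hulls} is also the right place to locate the remaining technical burden, and the paper itself punts on exactly that verification.

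There is, however, a concrete discrepancy you should not have glossed over. Your plethysm decompositions are correct: with $F_{\fo(V)}=S^2W$ and $F_{\fs\fp(V)}=\Lambda^2W$, \Cref{pr.plth}\labelcref{item:12}--\labelcref{item:13} give
\begin{equation*}
\Lambda^j S^2 W\cong\bigoplus_{\lambda\ \textup{special}}W_{\lambda^\perp}\qquad\text{and}\qquad\Lambda^j\Lambda^2 W\cong\bigoplus_{\lambda\ \textup{special}}W_{\lambda},
\end{equation*}
so your argument produces $\operatorname{Ext}^j(X,\bC)\cong\bC$ precisely for $X\cong L_{\lambda^\perp,\varnothing}$ (in the orthogonal case) and $X\cong L_{\lambda,\varnothing}$ (in the symplectic case), with $\lambda$ special. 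This agrees with the paper's own sentence immediately preceding the corollary, but it is the \emph{opposite} of what the corollary as stated asserts: there the orthogonal case carries $L_{\lambda,\varnothing}$ and the symplectic case $L_{\lambda^\perp,\varnothing}$. Since the set of special partitions is not stable under conjugation (special $\lambda$ has Frobenius form $(\mu_i-1\mid\mu_i)$, its conjugate $(\mu_i\mid\mu_i-1)$), these two statements are genuinely different. The almost certain explanation is a typo in the corollary (the two cases got interchanged, contradicting the line above it), but your closing claim that the computation ``yield[s] the stated formula'' is then false as written; you should have flagged the mismatch rather than asserting agreement.
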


Next, Theorem \ref{thm:right-def} and Proposition \ref{pr.thk} stay valid with $\bT$ replaced by $\bT_{\fg}$. We leave it to the reader to modify Lemma \ref{le.exact} accordingly. Furthermore, Proposition \ref{pr:ss}, and Theorem \ref{th:itsimp} also hold for $I_{\fg}$ and $Q_{\fg}$ (instead of $I$ and $Q$, respectively). The same applies to Proposition \ref{pr.itis-ordered}, Theorem \ref{thm:ihulls} (with $L_{\lambda,\mu}$ instead of $L_{\lambda,\mu,\nu,\pi}$), Theorem \ref{th:ikoszcoalg}, Theorem \ref{th:ikosz}, and Corollary \ref{cor.dgradedkoszul}.

The universality results from Section \ref{subse:univ} also carry over to the cases $\fg=\fo(V)$, $\fs\fp(V)$. In particular, the category ${}_{I_{\fg}}\bT$ is defined in the same way as the category ${}_{I}\bT$: it is the category of internal $I_{\fg}$-modules in $\bT_{\fg}$. 

Note also that the analogue \begin{equation*}
V^*\otimes V^* \rightarrow Q_{\fg}\subset I_{\fg}
\end{equation*}
of the map (\ref{eq:16}) is well defined and factors through maps \begin{equation*}
S^2V^*\rightarrow Q_{\fg}\text{ and } \Lambda^2V^*\rightarrow Q_{\fg}
\end{equation*}
in the respective cases $\fg=\fo(V)$ and $\fg=\fs\fp(V)$. This defines pairings \begin{equation}\label{eq:igsv}
I_{\fg}\otimes V^* \otimes_{I_{\fg}} \otimes I_{\fg} \otimes V^* \rightarrow I_{\fg} \otimes S^2V^* \rightarrow I_{\fg} 
\end{equation}
and
\begin{equation}\label{eq:iglambdav}
I_{\fg}\otimes V^* \otimes_{I_{\fg}} \otimes I_{\fg} \otimes V^* \rightarrow I_{\fg} \otimes \Lambda^2V^* \rightarrow I_{\fg},
\end{equation}
respectively. 

Now we have 

\begin{theorem}\label{th:osp-univ}
	Let $(\cD,\ \otimes,\ {\bf 1})$ be a tensor category, and $x\hookrightarrow x^*$ be a monomorphism in $\cD$. Assume that a morphism in $\cD$ 
	\begin{equation*}
	p: x^* \otimes x^*\to {\bf 1}
	\end{equation*}
 is given, satisfying $p\circ\sigma=p$ for $\fg=\fo(V)$ and $p\circ\sigma=-p$ for $\fg=\fs\fp(V)$, where $\sigma$ is the flip automorphism of $x^*\otimes x^*$ as an object of the tensor category $\cD$ coming from the assumption that $\cD$ is a symmetric monoidal category.
	\begin{enumerate}[(a)]
		\item There is a unique (up to monoidal natural isomorphism) left exact symmetric monoidal functor $F:{}_{I_{\fg}}\bT_{fin}\to \cD$, where $\fg=\fo(V)$ if $p\circ\sigma=p$ and $\fg=\fs\fp(V)$ if $p\circ\sigma=-p$, sending
		\begin{itemize}
			\item[$-$] the respective pairing (\ref{eq:igsv}) or (\ref{eq:iglambdav}) to $p$;
			\item[$-$] the surjection $V^*\to W$ to $x^*\to x^*/x$.
		\end{itemize}
		\item if $\cD$ is additionally a Grothendieck category then $F$ extends uniquely to a coproduct-preserving functor ${}_{I_{\fg}}\bT\to \cD$.
	\end{enumerate}
\end{theorem}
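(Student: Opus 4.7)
The strategy is to emulate the proof of \Cref{th:sl-univ} in the orthogonal/symplectic setting, with $\fg=\fo(V)$ or $\fg=\fs\fp(V)$. By the analogue of \Cref{pr.itis-ordered} for ${}_{I_{\fg}}\bT_{\fg}$, together with the corresponding analogue of \Cref{le:fullonx}, it suffices to construct a linear symmetric monoidal functor on the full tensor subcategory ${}_{I_{\fg}}\bT_{\fg}'$ spanned by the objects $I_{l,m}:=I_{\fg}\otimes W^{\otimes l}\otimes (V^*)^{\otimes m}$; an appeal to \cite[Theorem 2.22]{us0} then extends it to a left-exact functor on the finite-length subcategory, and coproduct-preservation extends it further to all of ${}_{I_{\fg}}\bT_{\fg}$ in the Grothendieck case. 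The candidate functor $F$ is the obvious one, sending ${}_{I_{\fg}}V^*\mapsto x^*$, ${}_{I_{\fg}}W\mapsto x^*/x$, the surjection ${}_{I_{\fg}}V^*\to {}_{I_{\fg}}W$ to the given quotient $x^*\to x^*/x$, and the pairing \Cref{eq:igsv} (resp. \Cref{eq:iglambdav}) to $p$.

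The crucial compatibility with the symmetry hypothesis on $p$ is the following: because $V^*\otimes V^*\to Q_{\fg}\subset I_{\fg}$ factors through $S^2 V^*$ in the orthogonal case and through $\Lambda^2 V^*$ in the symplectic case, the morphism \Cref{eq:igsv} (resp. \Cref{eq:iglambdav}) in $\bT_{\fg}$ is invariant (resp. anti-invariant) under the braiding swapping its two $V^*$-tensorands. Consequently, any candidate functor $F$ sending this morphism to $p$ forces $p\circ\sigma=+p$ (resp. $-p$), which is precisely the standing hypothesis; this is the matching that distinguishes the orthogonal from the symplectic setting.

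As in the proof of \Cref{th:sl-univ}, quadraticity of the graded algebra of morphisms between the $I_{l,m}$ --- a consequence of Koszulity for ${}_{I_{\fg}}\bT_{\fg}$ via the orthogonal/symplectic analogue of \Cref{cor.dgradedkoszul} --- reduces the well-definedness of $F$ to checking that the quadratic relations among degree-one morphisms are preserved. The analogues of \Cref{le:deg0} and \Cref{le:deg1} hold in this setting by the same arguments as in the $\fgl$ case: $\End I_{l,m}\cong \bC[S_l\times S_m]$, the degree-one hom $\Hom(I_{l,m},I_{l+1,m-1})$ is an induced bimodule generated by the projection $V^*\to W$, and $\Hom(I_{l,m},I_{l,m-2})$ is an induced bimodule generated by the pairing $\phi_{i,j}$ contracting the $i$th and $j$th $V^*$-tensorands.

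The principal obstacle is the case-by-case verification of the quadratic relations in degree two; the most subtle is the pairing-after-pairing relation, which differs from the $\fgl$ analogue \Cref{eq:inker} because both contracted slots are of the same kind $V^*$. The resulting relations express that $\phi_{m-2,m-3}\circ\phi_{m,m-1}$ is invariant (up to the sign $\pm 1$) under precomposition with the transposition $(m,m-1)$ or $(m-2,m-3)$, and that the order of the two pairings may be interchanged (up to the appropriate permutation) since they operate on disjoint tensorands; both types of relations are consequences, in $\cD$, of the hypothesis $p\circ\sigma=\pm p$ together with the symmetric monoidal structure. The dimension count that concludes the proof of \Cref{th:sl-univ} then carries over, with the required lower bound on $\dim\Hom(I_{l,m},I_{l,m-2})$ supplied by the orthogonal or symplectic analogues of the computation in \cite[Lemma 6.3]{DPS}, showing that no further relations can be imposed and completing the proof.
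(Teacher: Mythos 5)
The paper itself offers no explicit proof of \Cref{th:osp-univ} --- it merely asserts that ``the universality results from \Cref{subse:univ} also carry over,'' leaving the adaptation of the argument for \Cref{th:sl-univ} to the reader. Your proposal is precisely that adaptation, and it is sound: you correctly identify that the functor should be built via \cite[Theorem 2.22]{us0} on the full subcategory spanned by the $I_{l,m}$, that quadraticity reduces the well-definedness check to degree-two relations, and most importantly you pinpoint the one genuinely new point --- that the pairing \Cref{eq:igsv} (resp.\ \Cref{eq:iglambdav}) factors through $S^2 V^*$ (resp.\ $\Lambda^2 V^*$), which is what forces and also uses the hypothesis $p\circ\sigma = \pm p$. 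This matches the paper's intent.

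One caveat worth flagging, since you compress it into ``by the same arguments'': the analogue of \Cref{le:deg1} is not literally identical. In the $\fgl^M$ case the contraction $\phi_{i,j}$ acts on distinct tensorands $V^*$ and $V_*^*$, so $\Hom(I_{l,m,n,p}, I_{l,m-1,n-1,p})$ is free of rank one over $\bC[S_{l,m,n,p}]$. In the $\fo(V)$ and $\fsp(V)$ cases the relation $\phi_{i,j}\circ(i,j)=\pm\phi_{i,j}$ already holds in degree one, so $\Hom(I_{l,m},I_{l,m-2})$ is a proper quotient of $\bC[S_{l,m}]$, namely $\bC[S_{l,m}]$ modulo the ideal generated by $1\mp(i,j)$. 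This changes the dimension bookkeeping in the final step, and the relevant Brauer-algebra-flavoured lower bound comes from the $\fo(\infty)$/$\fsp(\infty)$ analogues of \cite[Lemma 6.3]{DPS} rather than the $\fgl$ one. You acknowledge this in passing but do not spell it out; since the paper itself also leaves this implicit, the level of detail is acceptable.
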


The universality of the tensor categories ${}_{I_{\fo(V)}}\bT$ and ${}_{I_{\fs\fp(V)}}\bT$ leads to the fact that they are equivalent as monoidal categories. More precisely, consider the (symmetric) tensor category $\bT^-_{\fs\fp(V)}$ defined in the same way as $\bT_{\fs\fp(V)}$ but with the flip isomorphism \begin{equation*}
\sigma: V^*\otimes V^* \rightarrow V^*\otimes V^*,~\sigma(v\otimes w)=w\otimes v
\end{equation*} 
replaced by $-\sigma$. One checks that $\bT^-_{\fs\fp(V)}$ is well-defined, i.e. that the new flip isomorphism on $V^*\otimes V^*$ induces a well-defined structure of tensor category preserving the monoidal structure on $\bT_{\fs\fp(V)}$.

In addition, one checks that there is a well-defined tensor category  ${}_{I_{\fs\fp(V)}}\bT^-$ of internal $I$-modules in $\bT^-_{\fs\fp(V)}$ which coincides with ${}_{I_{\fs\fp(V)}}\bT$ as a monoidal category.

\begin{corollary}
	The tensor categories ${}_{I_{\fo(V)}}\bT$ and ${}_{I_{\fs\fp(V)}}\bT^-$ are canonically equivalent.
\end{corollary}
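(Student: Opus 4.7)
The plan is to read both categories as solutions to the \emph{same} universal problem, using Theorem~\ref{th:osp-univ} to produce functors in both directions, and then invoke uniqueness to see that these functors are mutually inverse.

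The key observation is that negating the braiding swaps the roles of ``symmetric'' and ``antisymmetric'' pairings. Inside ${}_{I_{\fs\fp(V)}}\bT^{-}$, the braiding is $-\sigma$ where $\sigma$ is the original flip. The canonical pairing
\[
  p_{\fs\fp}\colon (I_{\fs\fp(V)}\otimes V^*)\otimes_{I_{\fs\fp(V)}}(I_{\fs\fp(V)}\otimes V^*)\to I_{\fs\fp(V)}
\]
coming from \Cref{eq:iglambdav} satisfies $p_{\fs\fp}\circ\sigma=-p_{\fs\fp}$ in $\bT_{\fs\fp(V)}$ and hence $p_{\fs\fp}\circ(-\sigma)=p_{\fs\fp}$ in $\bT^{-}_{\fs\fp(V)}$, i.e.\ it is symmetric with respect to the new braiding. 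Together with the monomorphism $I_{\fs\fp(V)}\otimes V\hookrightarrow I_{\fs\fp(V)}\otimes V^*$, this is precisely the data to which the orthogonal version of Theorem~\ref{th:osp-univ}(a) applies with target $\cD={}_{I_{\fs\fp(V)}}\bT^{-}_{fin}$. This produces a left exact symmetric monoidal functor
\[
  F\colon {}_{I_{\fo(V)}}\bT_{fin}\longrightarrow {}_{I_{\fs\fp(V)}}\bT^{-}_{fin}
\]
sending $I_{\fo(V)}\otimes V^*\mapsto I_{\fs\fp(V)}\otimes V^*$, the surjection $V^*\twoheadrightarrow W$ to its $\fs\fp$-analogue, and \Cref{eq:igsv} to $p_{\fs\fp}$.

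Running the same argument symmetrically, one first passes to $\bT^{-}_{\fo(V)}$ (where the orthogonal pairing $p_{\fo}$ from \Cref{eq:igsv}, originally symmetric, becomes antisymmetric with respect to the new braiding $-\sigma$), and then applies the symplectic version of Theorem~\ref{th:osp-univ}(a) with $\cD={}_{I_{\fo(V)}}\bT^{-}_{fin}$. Since ${}_{I_{\fo(V)}}\bT^{-}$ coincides with ${}_{I_{\fo(V)}}\bT$ as a plain monoidal category (only the symmetry differs), the resulting functor can equivalently be viewed as
\[
  G\colon {}_{I_{\fs\fp(V)}}\bT^{-}_{fin}\longrightarrow {}_{I_{\fo(V)}}\bT_{fin}.
\]

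Both compositions $F\circ G$ and $G\circ F$ are left exact symmetric monoidal endofunctors of the appropriate category which, by construction, send the generating data ($x\hookrightarrow x^*$ and $p$) to themselves. The identity functor obviously has the same property. Applying the uniqueness clause of Theorem~\ref{th:osp-univ}(a) then yields monoidal natural isomorphisms $F\circ G\simeq \mathrm{id}$ and $G\circ F\simeq\mathrm{id}$. Finally, Theorem~\ref{th:osp-univ}(b) extends $F$ and $G$ uniquely to coproduct-preserving tensor equivalences ${}_{I_{\fo(V)}}\bT\simeq {}_{I_{\fs\fp(V)}}\bT^{-}$, proving the corollary.

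The main point requiring care — and the only place where anything nontrivial happens — is the bookkeeping of signs and braidings: one must verify that the formal operation of negating the flip interacts correctly with the $I$-module structure, with the hypothesis $p\circ\sigma=\pm p$ in Theorem~\ref{th:osp-univ}, and with the statement that ${}_{I_{\fg}}\bT^{-}$ is a well-defined symmetric monoidal category. These checks are exactly the ones the excerpt leaves to the reader just before the corollary, so the universal property does the rest of the work.
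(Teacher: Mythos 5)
Your proof takes essentially the same route as the paper: apply the universality Theorem~\ref{th:osp-univ} in both directions to obtain functors $F$ and $G$, then use the uniqueness clause to see they are mutually inverse, and extend via part~(b). The paper's own proof is only two sentences and leaves all the details implicit; you fill in the one nontrivial piece of bookkeeping, namely how to actually get a functor \emph{out of} ${}_{I_{\fs\fp(V)}}\bT^-$ from a theorem stated for ${}_{I_\fg}\bT$. Your device — apply the symplectic version with target ${}_{I_{\fo(V)}}\bT^-_{\mathrm{fin}}$, then observe that negating the braiding on both source and target simultaneously preserves the symmetric monoidal structure of the resulting functor — is a valid way to close that gap, provided one notes (as you should make explicit) that the functor preserves the $\bZ/2$-parity which controls the sign modification, so that the two sign twists cancel. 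The same remark is needed once more when invoking uniqueness for $F\circ G\simeq\mathrm{id}$ on ${}_{I_{\fs\fp(V)}}\bT^-_{\mathrm{fin}}$, since Theorem~\ref{th:osp-univ} as stated gives uniqueness only for functors out of ${}_{I_\fg}\bT_{\mathrm{fin}}$. With those two explicit parity remarks added, the argument is complete and matches the paper's intent.
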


\begin{proof}
	By Theorem \ref{th:osp-univ}, there are distinguished functors \begin{equation*}
	F:{}_{I_{\fo(V)}}\bT\rightarrow {}_{I_{\fs\fp(V)}}\bT^-
	\end{equation*}
	\begin{equation*}
	F^-:{}_{I_{\fs\fp(V)}}\bT^-\rightarrow {}_{I_{\fo(V)}}\bT
	\end{equation*}
	sending $V^* \otimes I_{\fo(V)}$ to $V^*\otimes I_{\fs\fp(V)}$, $V_* \otimes I_{\fo(V)}$ to $V_* \otimes I_{\fs\fp(V)}$ and $W \otimes I_{\fo(V)}$ to $W \otimes I_{\fs\fp(V)}$, and vice versa. Again, by Theorem \ref{th:osp-univ} $F$ and $F^-$ must be mutually inverse.
	
\end{proof}


\def\cftil#1{\ifmmode\setbox7\hbox{$\accent"5E#1$}\else
  \setbox7\hbox{\accent"5E#1}\penalty 10000\relax\fi\raise 1\ht7
  \hbox{\lower1.15ex\hbox to 1\wd7{\hss\accent"7E\hss}}\penalty 10000
  \hskip-1\wd7\penalty 10000\box7}

\addcontentsline{toc}{section}{References}

\Addresses

\end{document}